\theoremstyle{plain}
\newtheorem{theorem}{Theorem}[section]
\newtheorem{proposition}[theorem]{Proposition}
\newtheorem{lemma}[theorem]{Lemma}
\newtheorem{corollary}[theorem]{Corollary}
\theoremstyle{definition}
\newtheorem{definition}[theorem]{Definition}
\newtheorem{example}[theorem]{Example}
\newtheorem{remark}[theorem]{Remark}
\renewcommand{\algorithmicrequire}{\textbf{Input:}}
\renewcommand{\algorithmicensure}{\textbf{Output:}}
\newcommand{\tp}{{\scriptscriptstyle\mathsf{T}}}
\DeclareMathOperator{\GL}{GL}
\DeclareMathOperator{\rank}{rank}
\DeclareMathOperator{\Gr}{Gr}
\DeclareMathOperator{\proj}{proj}
\DeclareMathOperator{\Herm}{Herm}
\DeclareMathOperator{\diag}{diag}
\DeclareMathOperator{\tr}{tr}
\tikzset{
  symbol/.style={
    draw=none,
    every to/.append style={
      edge node={node [sloped, allow upside down, auto=false]{$#1$}}}
  }
}
\tikzstyle{startstop} = [rectangle, rounded corners, minimum width = 2cm, minimum height=1cm,text centered, draw = black, fill = red!40]
\tikzstyle{process} = [rectangle, minimum width=1cm, minimum height=1cm, text centered, draw=black, fill = yellow!50]
\tikzstyle{processb} = [rectangle, minimum width=1cm, minimum height=1cm, text centered, draw=black, fill = blue!50]
\tikzstyle{arrow} = [->,>=stealth]
\tikzstyle{dotsnd}=[rectangle, minimum width=1cm, minimum height=1cm, text centered]
\tikzstyle{processd} = [dashed, minimum width=8cm, minimum height=1.5cm, text centered, draw=black]
\tikzstyle{processdd} = [dashed, minimum width=7cm, minimum height=1.5cm, text centered, draw=black]
\begin{document}
\title{Bundle-based similarity measurement for positive semidefinite matrices}
\author[P.~Liu]{Peng Liu}
\address{KLMM, Academy of Mathematics and Systems Science, Chinese Academy of Sciences, Beijing 100190, China}
\email{liupeng9@amss.ac.cn}
\author[K.~Ye]{Ke Ye}
\address{KLMM, Academy of Mathematics and Systems Science, Chinese Academy of Sciences, Beijing 100190, China}
\email{keyk@amss.ac.cn}
\date{}
\begin{abstract}
Positive semidefinite (PSD) matrices are indispensable in many fields of science. A similarity measurement for such matrices is usually an essential ingredient in the mathematical modelling of a scientific problem. This paper proposes a unified framework to construct similarity measurements for PSD matrices. The framework is obtained by exploring the fiber bundle structure of the cone of PSD matrices and generalizing the idea of the point-set distance previously developed for linear subsapces and positive definite (PD) matrices. The framework demonstrates both theoretical advantages and computational convenience: \emph{(1)} We prove that the similarity measurement constructed by the framework can be recognized either as the cost of a parallel transport or as the length of a quasi-geodesic curve. \emph{(2)} We extend commonly used divergences for equidimensional PD matrices to the non-equidimensional case. Examples include Kullback-Leibler divergence, Bhattacharyya divergence and R\'{e}nyi divergence. We prove that these extensions enjoy  the same consistency property as their counterpart for geodesic distance. \emph{(3)} We apply our geometric framework to further extend those in \emph{(2)} to similarity measurements for arbitrary PSD matrices. We also provide simple formulae to compute these similarity measurements in most situations.
\end{abstract}
\maketitle
\section{Introduction}
Positive semidefinite (PSD) matrices are widespread in numerous scientific fields under different names. For instance, they are called covariance matrices in data analysis \cite{PE08,YWD22}, kernel matrices in machine learning \cite{LCBGJ04,ML23}, genetic variance–covariance matrices in genetic envolution \cite{Pigliucci06,AHMB14}, correlation matrices in wireless communication \cite{HB23,KSVR03} and density matrices in quantum computation and quantum mechanics \cite{NC11,White92}. Due to its great importance in applications, the class of PSD matrices has been studied by mathematicians from various perspectives \cite{GV13,VB96,NRS10,Bhatia15,BS10,lim2019geometric,AJA22,CL22}. One of the most fundamental problems concerning PSD matrices is: \emph{how to measure the similarity between two such matrices?} 

A distance function on the space of PSD matrices is arguably the most natural similarity measurement. Obviously, the restriction of any matrix distance to the space of PSD matrices is one such candidate. However, this induced distance fails to reflect the intrinsic geometry of PSD matrices. For example, we consider the following PSD matrices of rank one:  
\[
A = \begin{bmatrix}
1 & 0 \\
0 & 0
\end{bmatrix},\quad  B = \begin{bmatrix}
0 & 0 \\
0 & 1
\end{bmatrix}. 
\] 
The geodesic curve connecting $A$ and $B$ in $\mathbb{R}^{2\times 2}$ with respect to the Frobenius distance is simply 
\[
\gamma(t) \coloneqq t A + (1 - t) B = \begin{bmatrix}
t & 0 \\
0 & 1-t
\end{bmatrix},\quad  0 \le t \le 1.
\]
Clearly, $\rank \left( \gamma(t) \right) = 2$ for all $t \in (0,1)$. This failure of rank-preserving leads to undesirable effects in applications \cite{PFA06,AFPA06}. To circumvent the problem, geometric techniques are introduced to obtain more intrinsic distances. It is noticed that the space $\Herm_n^{++}$ of all $n\times n$ positive definite matrices is a smooth manifold. Thus, different geodesic distances can be obtained if we equip $\Herm_n^{++}$ with different Riemannian metrics \cite{FD09,FD12,Bhatia15}. Moreover, the quotient manifold structure of $\Herm_{n,r}^{+}$ is fully explored in the literature to obtain distance functions \cite{MA20,MHA19,BS10}. Here $\Herm_{n,r}^{+}$ denotes the space of all $n\times n$ PSD matrices of the fixed rank $r$. Unfortunately, since the space $\bigcup_{n=1}^\infty \bigsqcup_{r=0}^n \Herm_{n,r}^+$ consisting of all PSD matrices is not a smooth manifold in the usual topology, the differential geometric approach developed in aforementioned works is no longer applicable. As far as we are aware, there are essentially two methods to resolve the issue: one is based on the point-set distance  \cite{lim2019geometric} and the other is based on the optimal transport of probability distributions \cite{AJA22,CL22}. Although both methods are beneficial in some circumstances, they are not able to completely address the problem. For example, the point-set distance proposed in \cite{lim2019geometric} is only well-defined on $\bigcup_{n=1}^\infty \Herm_{n}^{++}$. The optimal transport based distance in \cite{AJA22,CL22} demonstrates some desirable theoretical properties but it is only explicitly computable in a few special cases.  

Besides those geometric measurements, there are also candidates in (quantum) information theory. Examples include Kullback-Leibler divergence \cite{SR51}, R\'{e}nyi divergence \cite{JC82}, S-divergence \cite{Sra12}, quantum relative entropy \cite{Wehrl78}, Umegaki relative entropy \cite{Hisaharu62}, Belavkin-Staszewski relative entropy \cite{BS82} and quantum Jensen-Shannon Divergence \cite{APD05}. However, these information theoretic measurements are not well-defined on $\bigcup_{n=1}^\infty \bigsqcup_{r=0}^n \Herm_{n,r}^+$. To be more specific, the first three in the preceding list are only well-defined for positive definite (PD) matrices, while the rest are only defined for a pair of PSD matrices $(A,B)$ where $\ker (B) \subseteq \ker(A)$.

In this paper, we present a unified framework (cf. Section~\ref{sec:GD}) to construct similarity measurements on the whole of $\bigcup_{n=1}^\infty \bigsqcup_{r=0}^n \Herm_{n,r}^+$. Ingredients that underlie our framework are the fiber bundle structure of $\Herm_{n,r}^+$ (cf. Section~\ref{sec:bundle}), a generalization of Hausdorff distance (cf. Subsubsection~\ref{subsub:Hausdorff}) and the idea of point-set distance \cite{ye2016schubert,lim2019geometric}. Given a function $d$ (resp. $\delta$) which measures the similarity between two non-equidimensional subspaces (resp. PD matrices), we denote by $\operatorname{GD}_{d,\delta}$ the measurement on $\bigcup_{n=1}^\infty \bigsqcup_{r=0}^n \Herm_{n,r}^+$ obtained by our framework (cf. Definition~\ref{def:GD}). By definition, $\operatorname{GD}_{d,\delta}$ extends both $d$ and $\delta$ (Proposition~\ref{prop:restriction}). We investigate both theoretical and computational aspects of $\operatorname{GD}_{d,\delta}$ in Sections~\ref{sec:geometry}--\ref{sec:distance}. 

On the theoretical side, we prove the following results:
\begin{enumerate}[(a)]
\item If $d$ and $\delta$ are induced from geodesic distances on Grassmann manifolds \cite{ye2016schubert} and cones of PD matrices \cite{lim2019geometric} respectively, then $\operatorname{GD}_{d,\delta}$ can be interpreted as the cost of a parallel transport (cf. Subsection~\ref{subsec:parallel transport}) and the length of a quasi-geodesic curve (cf. Subsection~\ref{subsec:quasi-geodesic}).
\item Most commonly-used divergences for PD matrices can be extended to non-equidimensional cases, by two types of point-set distances. More importantly, the two extended divergences coincide (cf. Subsections~\ref{subsec:geodist} and \ref{subsec:diverg}). Such extended divergences provide us more computable choices of $\delta$ for $\operatorname{GD}_{d,\delta}$.
\item There is a unique projection/lift map associated to an extended divergence (cf. Subsection~\ref{subsec:proj and lift}).
\end{enumerate}

On the computational side, there is an explicit formula for $\operatorname{GD}_{d,\delta}$ generically (cf. Section~\ref{sec:explicit formulae}). To be more precise, $A\in \Herm^+_{n,r}$ and $B\in \Herm^+_{m,s}$ ($r \le s$) satisfies $\ker(A) \cap \ker(B)^\perp = 0$, then $\operatorname{GD}_{d,\delta}(A,B)$ is easily computable (cf. Corollary~\ref{cor:simpify delta_H} and Tables~\ref{tab:explicit formulae 1}--\ref{tab:explicit formulae 4}). It is straightforward to verify that the subset $S\subseteq \Herm^+_{n,r} \times \Herm^+_{m,s}$ consisting of $(A,B)$ such that $\ker(A) \cap \ker(B)^\perp = 0$ is defined by some polynomial equations. Thus, formulae for $\operatorname{GD}_{d,\delta}$ listed in Tables~\ref{tab:explicit formulae 1}--\ref{tab:explicit formulae 4} are applicable to the full measure subset $(\Herm^+_{n,r} \times \Herm^+_{m,s}) \setminus S$, if we equip $\Herm^+_{n,r} \times \Herm^+_{m,s}$ with the Lebesgue measure.
\section{Preliminaries and notations}
\subsection{Positive semidefinite matrix}
Let $\mathcal{k}$ be either $\mathbb{R}$ or $\mathbb{C}$. Given $A\in \mathcal{k}^{n\times n}$ such that $A^\ast = A$, we say that $A$ is a \emph{positive semidefinite (PSD) matrix}, denoted by $A \succeq 0$,  if $x^\ast A x \ge 0$ for all $x\in \mathcal{k}^n$. We say that $A$ is a \emph{positive definite (PD) matrix}, denoted by $A \succ 0$, if  $A\succeq 0$ and $A$ is invertible. Let $r\le n$ be a positive integer. We denote 
\begin{align*}
\Herm_{n}&\coloneqq \lbrace 
A\in \mathcal{k}^{n\times n}: A^\ast = A 
\rbrace,\\
\Herm^{+}_{n,r} &\coloneqq \lbrace 
A\in \Herm_{n}: \rank(A) = r,~A \succeq 0 
\rbrace,\\
\Herm_{n}^{++} &\coloneqq \Herm^{+}_{n,n} = \lbrace
A\in \Herm_{n}: A \succ 0 
\rbrace.
\end{align*}
\subsection{Principal vectors and angles}
Let $\mathbb{A} \in \Gr(r,n)$ and $\mathbb{B}\in \Gr(s,n)$ be given subspaces. Let $A$ (resp. $B$) be an $n\times r$ (resp. $n\times s$) matrix whose column vectors consist an orthonormal basis of $\mathbb{A}$ (resp. $\mathbb{B}$). Let $A^\ast B = U \Sigma V^\ast$ be a full singular value decomposition (SVD) of $A^\ast B$, where $U\in U(r), V\in U(s)$ and $\Sigma$ is an $r\times s$ diagonal matrix whose diagonal elements are 
\[
\sigma_1 \ge \cdots \ge \sigma_{\min\{r,s\}} \ge 0.
\] 
We write 
\[
AU = \begin{bmatrix}
a_1 & \dots & a_r
\end{bmatrix}\in \mathcal{k}^{n\times r}, \quad BV = \begin{bmatrix}
b_1 & \dots & b_s
\end{bmatrix}\in \mathcal{k}^{n\times s}
\]
and call $\left( \{a_1,\dots, a_r\},\{b_1,\dots, b_s\} \right)$ a set of \emph{principal vectors} between $\mathbb{A}$ and $\mathbb{B}$. For $1 \le i \le \min\{r,s\}$, $\theta_i \coloneqq \arccos(a_i^\ast b_i)$ is called the \emph{$i$-th principal angle} between $\mathbb{A}$ and $\mathbb{B}$. It is clear from the definition that $\{a_1,\dots, a_r\}$ (resp. $\{b_1,\dots, b_s\}$) is an orthonormal basis of $\mathbb{A}$ (resp. $\mathbb{B}$), such that for each $1 \le i \le r$ and $1 \le j \le s$, 
\[
a_i^\ast b_j =\begin{cases}
\cos(\theta_i),&~\text{if}~i = j \le \min\{r,s\}, \\
0,&~\text{otherwise}.
\end{cases}
\]
\section{Bundle structures and stratifications}\label{sec:bundle}
Since each $A\in \Herm^{+}_{n,r}$ has rank $r$, we have $\dim \left( \ker(A)^{\perp} \right) = r$. Thus we may define a surjective map $\pi_{n,r}$ that follows.
\[
\pi_{n,r}: \Herm^{+}_{n,r} \to \Gr(r,n),\quad \pi_{n,r}(A) = \ker(A)^\perp.
\]
Given any $\mathbb{U} \in \Gr(r,n)$, the fiber $\pi_{n,r}^{-1}(\mathbb{U})$ consists of all $A\in \mathcal{k}^{n\times n}$ such that $\ker(A) = \mathbb{U}^{\perp}$ and $A|_{\mathbb{U}}: \mathbb{U} \to \mathbb{U}$ is a positive operator. As a consequence, $\pi_{n,r}^{-1}(\mathbb{U})$ can be identified with the cone of all positive linear operators on $\mathbb{U}$. In fact, we we have the following:
\begin{lemma}\label{lem:bundle}
$\pi_{n,r}: \Herm^{+}_{n,r} \to \Gr(r,n)$ is a fiber bundle on $\Gr(r,n)$ whose fiber is the open cone $\Herm_{r}^{++}$. Moreover, $\Herm^{+}_{n,r}$ can be identified with a sub-bundle of $\Herm (\mathcal{E}_r)$ where $\mathcal{E}_r \to \Gr(r,n)$ is the tautological  subbundle of $\Gr(r,n)$ and the fiber of $\Herm (\mathcal{E}_r)$ over $\mathbb{U}\in \Gr(r,n)$ is the space of self-adjoint operators on $\mathbb{U}$.
\end{lemma}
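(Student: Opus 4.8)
The plan is to realize $\Herm^+_{n,r}$ as the open ``positive-definite sub-cone'' of $\Herm(\mathcal{E}_r)$ and then to deduce local triviality of $\pi_{n,r}$ from that of the tautological bundle $\mathcal{E}_r$. To set up the fiberwise identification, for $A\in \Herm^+_{n,r}$ write $\mathbb{U}_A \coloneqq \ker(A)^\perp = \operatorname{col}(A)\in \Gr(r,n)$. Since $A$ is self-adjoint, PSD, and of rank $r$, its restriction $A|_{\mathbb{U}_A}$ is a positive-definite self-adjoint operator on $\mathbb{U}_A$, while $A$ annihilates $\mathbb{U}_A^\perp$. Thus $\Phi(A)\coloneqq (\mathbb{U}_A, A|_{\mathbb{U}_A})$ maps $\Herm^+_{n,r}$ into the total space of $\Herm(\mathcal{E}_r)$, landing in the open subset whose fiber over $\mathbb{U}$ is the cone of positive-definite operators on $\mathbb{U}$. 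Its inverse sends $(\mathbb{U},P)$ to $\iota_{\mathbb{U}} P \iota_{\mathbb{U}}^\ast$, where $\iota_{\mathbb{U}}\colon \mathbb{U}\hookrightarrow \mathcal{k}^n$ is the inclusion; this operator equals $P$ on $\mathbb{U}$ and $0$ on $\mathbb{U}^\perp$, hence is PSD of rank $r$. As $\pi_{n,r}$ is $\Phi$ followed by the bundle projection of $\Herm(\mathcal{E}_r)$, it suffices to show that $\Phi$ is a homeomorphism and that the positive-definite sub-cone bundle is locally trivial with fiber $\Herm_r^{++}$.

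Next I would produce smooth local trivializations of $\mathcal{E}_r$, and hence of $\Herm(\mathcal{E}_r)$. I use the standard smooth structure on $\Gr(r,n)$ given by $\mathbb{U}\mapsto \proj_{\mathbb{U}}$, identifying $\Gr(r,n)$ with the set of rank-$r$ self-adjoint idempotents in $\Herm_n$; the tautological bundle is $\{(\mathbb{U},v): v\in \mathbb{U}\}$. Fixing $\mathbb{U}_0$ with orthonormal basis $e_1,\dots,e_r$, let $\mathcal{O}_{\mathbb{U}_0}\coloneqq \{\mathbb{U}: \mathbb{U}\cap \mathbb{U}_0^\perp = 0\}$ be the open neighborhood of $\mathbb{U}_0$ on which the vectors $\proj_{\mathbb{U}}e_1,\dots,\proj_{\mathbb{U}}e_r$ remain linearly independent. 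Applying Gram--Schmidt to these vectors yields a smooth orthonormal frame $F(\mathbb{U}) = [\,f_1(\mathbb{U})\mid\cdots\mid f_r(\mathbb{U})\,]\in \mathcal{k}^{n\times r}$ with $F(\mathbb{U})^\ast F(\mathbb{U}) = I_r$ and $\operatorname{col}(F(\mathbb{U})) = \mathbb{U}$. This furnishes the trivialization $\Herm(\mathcal{E}_r)|_{\mathcal{O}_{\mathbb{U}_0}} \cong \mathcal{O}_{\mathbb{U}_0}\times \Herm_r$ sending $(\mathbb{U},H)$ to $(\mathbb{U}, F(\mathbb{U})^\ast H F(\mathbb{U}))$, the coordinate matrix of $H$ in the frame $F(\mathbb{U})$.

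I would then transport these trivializations through $\Phi$ and read off the structure group. Over $\mathcal{O}_{\mathbb{U}_0}$, define $\psi(A) \coloneqq (\mathbb{U}_A,\, F(\mathbb{U}_A)^\ast A\, F(\mathbb{U}_A))$ with candidate inverse $(\mathbb{U},M)\mapsto F(\mathbb{U}) M F(\mathbb{U})^\ast$. Using $F(\mathbb{U})F(\mathbb{U})^\ast = \proj_{\mathbb{U}}$ and $F(\mathbb{U})^\ast F(\mathbb{U}) = I_r$ one checks that these are mutually inverse and that $F(\mathbb{U})MF(\mathbb{U})^\ast$ is PSD of rank $r$ precisely when $M\in \Herm_r^{++}$, so the fiber is $\Herm_r^{++}$. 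For two frames $F,\tilde F$ over overlapping charts one has $F(\mathbb{U}) = \tilde F(\mathbb{U}) g(\mathbb{U})$ for a smooth $g\colon \mathcal{O}\cap \tilde{\mathcal{O}}\to U(r)$, so the fiber transition is the congruence $M\mapsto g(\mathbb{U})^\ast M g(\mathbb{U})$, which preserves $\Herm_r^{++}$. This exhibits $\pi_{n,r}$ as a fiber bundle over $\Gr(r,n)$ with fiber $\Herm_r^{++}$ and structure group $U(r)$, and simultaneously realizes $\Herm^+_{n,r}$, via $\Phi$, as the open positive-definite sub-cone bundle of $\Herm(\mathcal{E}_r)$, as claimed.

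The main obstacle I anticipate is not a single computation but verifying that $\Phi$, equivalently $A\mapsto \proj_{\operatorname{col}(A)}$, is genuinely continuous (indeed smooth): kernels and column spaces can jump under limits, which is exactly why one must restrict to the rank-$r$ stratum and build the trivialization from a frame $F(\mathbb{U})$ depending smoothly on $\mathbb{U}$. Once such a frame is in hand, continuity of $\psi$ and $\psi^{-1}$ reduces to continuity of $\mathbb{U}\mapsto F(\mathbb{U})$ and of $A\mapsto \mathbb{U}_A$ on $\pi_{n,r}^{-1}(\mathcal{O}_{\mathbb{U}_0})$; the latter follows because $\proj_{\mathbb{U}_A}$ is recovered smoothly from $A$ on this stratum (for instance as the spectral projection onto the positive eigenspace), which closes the argument.
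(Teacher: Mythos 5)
Your proof is correct and complete; the paper actually states Lemma~\ref{lem:bundle} without any proof, so there is nothing to compare it against. Your route --- frame-based local trivializations of the tautological bundle via Gram--Schmidt on $\proj_{\mathbb{U}}e_1,\dots,\proj_{\mathbb{U}}e_r$, congruence transition functions in $U(r)$, and continuity of $A\mapsto \proj_{\ker(A)^\perp}$ on the fixed-rank stratum via the spectral projection --- is the standard argument and correctly supplies the details the authors treat as implicit.
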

It is obviously true that any $A \in \Herm^{+}_{n,r}$ can be uniquely determined by the pair 
\[
(\ker(A)^\perp, A\mid_{\ker(A)^\perp})\in \Gr(r,n) \times \Herm^{++}_{r}.
\]
However, this does not imply that $\Herm^{+}_{n,r} \simeq \Gr(r,n) \times \Herm^{++}_{r}$ since $A\mid_{\ker(A)^\perp}$ acts on different $r$-dimensional subspaces of $\mathcal{k}^n$ as $A$ varies, if $r < n$.
Given integers $1 \le p, q \le n$ and $\max\{p-q,0\} \le l \le p$, we define the following incidence set: 
\begin{equation}\label{eq:E_l}
E_l \coloneqq \lbrace
(\mathbb{U}, \mathbb{V}) \in \Gr(p,n) \times \Gr(q,n): \dim \left( \mathbb{U} \cap \mathbb{V}^\perp \right) = l
\rbrace
\end{equation}
We denote by $\proj_1$ the projection map from $E_l$ onto its first factor, i.e., 
\[
\proj_1: E_l \to \Gr(p,n),\quad \proj_1(\mathbb{U},\mathbb{V}) = \mathbb{U}.
\]
\begin{lemma}\label{lem:stratification}
We have the following: 
\begin{enumerate}[(a)]
\item $\{E_l\}_{l = \max\{0,p-q\}}^p$ is a stratification of $\Gr(p,n) \times \Gr(q,n)$.\label{lem:stratification:item1}
\item $\proj_1: E_l \to \Gr(p,n)$ is a fiber bundle on $\Gr(p,n)$, whose fiber is the Schubert cell in $\Gr(q,n)$ associated with the sequence $(\underbrace{{q - p + l,\dots, q - p + l}}_{\text{$(n - q)$ times}})$. In particular, for each $\mathbb{U}\in \Gr(p,n)$, the fiber $\proj_1^{-1}(\mathbb{U})$ is an analytic submanifold of $\Gr(q,n)$, of dimension $q(n-q) - l(q-p+l)$. \label{lem:stratification:item2}  
\end{enumerate}
\end{lemma}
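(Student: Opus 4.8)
The plan is to prove part~\eqref{lem:stratification:item2} first and then deduce \eqref{lem:stratification:item1} from it. The geometric engine behind both is the elementary identity
\[
\dim\left( \mathbb{U} \cap \mathbb{V}^\perp \right) = (p - q) + \dim\left( \mathbb{U}^\perp \cap \mathbb{V} \right),
\]
valid for any $\mathbb{U} \in \Gr(p,n)$ and $\mathbb{V} \in \Gr(q,n)$. I would obtain it from $\left( \mathbb{X} \cap \mathbb{Y} \right)^\perp = \mathbb{X}^\perp + \mathbb{Y}^\perp$ together with $\dim\left( \mathbb{X} + \mathbb{Y} \right) = \dim \mathbb{X} + \dim \mathbb{Y} - \dim\left( \mathbb{X} \cap \mathbb{Y} \right)$. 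Fixing $\mathbb{U}$ and writing $W \coloneqq \mathbb{U}^\perp$ and $k \coloneqq q - p + l$, the identity shows that the fiber $\proj_1^{-1}(\mathbb{U})$ is exactly $\{\mathbb{V} \in \Gr(q,n) : \dim(\mathbb{V} \cap W) = k\}$; along the way, the bounds $0 \le \dim(\mathbb{V} \cap W) \le \min\{q, n-p\}$ also pin down the admissible range of $l$.

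Next I would identify this fiber with the asserted Schubert cell and compute its dimension. Over $\GL_n$, a pair consisting of a fixed subspace $W$ and a variable $q$-plane $\mathbb{V}$ has the single invariant $\dim(\mathbb{V} \cap W)$, so $\{\mathbb{V} : \dim(\mathbb{V} \cap W) = k\}$ is one orbit of the parabolic $\operatorname{Stab}_{\GL_n}(W)$, that is, the Schubert cell attached to the partial flag $0 \subset W \subset \mathcal{k}^n$, which in the indexing of the excerpt is the constant sequence $(q-p+l,\dots,q-p+l)$ of length $n-q$. To find its dimension I would fiber it over $\Gr(k, W) = \Gr(k, n-p)$ via $\mathbb{V} \mapsto \mathbb{V} \cap W$: the base contributes $k(n-p-k)$, and the typical fiber, consisting of the $(q-k)$-planes in $\mathcal{k}^n / (\mathbb{V} \cap W)$ meeting $W/(\mathbb{V} \cap W)$ trivially, is open dense in $\Gr(q-k, n-k)$ and contributes $(q-k)(n-q)$. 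Substituting $k = q-p+l$ and simplifying yields $q(n-q) - l(q-p+l)$, and exhibits the fiber as a locally closed analytic (indeed algebraic) submanifold of $\Gr(q,n)$.

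For local triviality I would pass to the unitary (resp.\ orthogonal) group, which is where the choice of structure group is essential: $U(n)$ acts transitively on $\Gr(p,n)$, and because a unitary map commutes with orthogonal complementation, the diagonal action on $\Gr(p,n) \times \Gr(q,n)$ preserves each $E_l$ and makes $\proj_1$ equivariant. The stabilizer $K = \operatorname{Stab}_{U(n)}(\mathbb{U}_0) \cong U(p) \times U(n-p)$ fixes both $\mathbb{U}_0$ and $\mathbb{U}_0^\perp$, hence acts on the fiber $F$ over the base point, so $E_l \cong U(n) \times_K F$ is the bundle associated to the principal $K$-bundle $U(n) \to \Gr(p,n)$ and is therefore locally trivial with fiber $F$. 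This proves \eqref{lem:stratification:item2}. For \eqref{lem:stratification:item1}, the disjoint covering is immediate, and upper semicontinuity of $(\mathbb{U},\mathbb{V}) \mapsto \dim(\mathbb{U} \cap \mathbb{V}^\perp)$ shows each set $\{\dim(\mathbb{U}\cap\mathbb{V}^\perp) \ge l\}$ is closed, so each $E_l$ is locally closed and, by \eqref{lem:stratification:item2}, a smooth analytic manifold. Combining this with the classical fact that the closure of the Schubert cell $\{\mathbb{V}: \dim(\mathbb{V}\cap W)=k\}$ is the Schubert variety $\{\mathbb{V}: \dim(\mathbb{V}\cap W)\ge k\}$, and transporting it fiberwise through the $U(n)$-equivariant local trivialization, gives $\overline{E_l} = \bigsqcup_{l' \ge l} E_{l'}$, which is the frontier condition.

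I expect the main obstacle to be the middle step: rigorously certifying that the fiber is a \emph{single} Schubert cell (not a union of cells) carrying precisely the stated index, which forces the partial-flag viewpoint and the $\GL_n$-orbit argument, and then upgrading the set-theoretic fibration to a locally trivial bundle, where one must switch from $\GL_n$ to $U(n)$ so that the stabilizer respects orthogonal complements.
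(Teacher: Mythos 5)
Your proof is correct, but it is a genuinely different route from the paper's: the paper disposes of part~(a) as ``straightforward from the definition'' and outsources all of part~(b) to a citation of Wong's Theorem~5.1 on Schubert-cell incidence geometry, whereas you reprove the substance of that theorem from scratch. Your key identity $\dim(\mathbb{U}\cap\mathbb{V}^\perp)=(p-q)+\dim(\mathbb{U}^\perp\cap\mathbb{V})$ correctly converts the fiber into $\{\mathbb{V}:\dim(\mathbb{V}\cap W)=k\}$ with $W=\mathbb{U}_0^\perp$ and $k=q-p+l$, the two-step fibration $\mathbb{V}\mapsto\mathbb{V}\cap W$ over $\Gr(k,n-p)$ gives $k(n-p-k)+(q-k)(n-q)=q(n-q)-l(q-p+l)$ as claimed, and the associated-bundle argument $E_l\cong U(n)\times_K F$ is the right way to get local triviality of $\proj_1$ --- your observation that one must use $U(n)$ rather than $\GL_n$ because $\mathbb{V}\mapsto\mathbb{V}^\perp$ is only unitarily equivariant is exactly the point that makes the equivariance work. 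Your treatment of~(a) (upper semicontinuity for local closedness, plus the classical closure relation $\overline{\{\dim(\mathbb{V}\cap W)=k\}}=\{\dim(\mathbb{V}\cap W)\ge k\}$ transported through the trivialization for the frontier condition) is also more than the paper records, since the paper never verifies the frontier condition explicitly. What your approach buys is self-containedness and an explicit dimension count; what the paper's citation buys is brevity and the precise Schubert-cell labeling. One small point of friction: the fiber is a single orbit of the \emph{parabolic} stabilizer of $W$ (a cell for the partial flag $0\subset W\subset\mathcal{k}^n$), i.e.\ a union of complete-flag Schubert cells, and the paper's stated index --- a sequence of length $n-q$ for a cell in $\Gr(q,n)$, where its own convention calls for $q$ entries --- does not literally match its own definition; your partial-flag reading is the correct resolution of that discrepancy, and your dimension formula agrees with the one asserted in the lemma.
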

Given decreasing integers $m - k \ge a_1 \ge \dots \ge  a_k \ge 0$, the Schubert cell in $\Gr(k,m)$ associted with $(a_1,\dots, a_k)$ consists of all $\mathbb{W}\in \Gr(k,m)$ such that $\dim (\mathbb{W} \cap V_{m-k + i -a_i}) = i$ for all $i \ge 1$. Here $\mathbb{V}_1 \subsetneq \cdots \subsetneq \mathbb{V}_m = \mathcal{k}^m$ is a fixed complete flag in $\mathcal{k}^m$.
\begin{proof}
The proof of \eqref{lem:stratification:item1} is straightforward from the definition of $E_l$ while \eqref{lem:stratification:item2} is a consequence of \cite[Theorem~5.1]{wong1970class}.
\end{proof}
Let $r \le s \le n$ be positive integers. According to Lemmas~\ref{lem:bundle} and \ref{lem:stratification}, we obtain a stratification of $\Herm^{+}_{n,r} \times \Herm^{+}_{n,s}$: 
\[
K_l \coloneqq ( \pi_{n,r} \times \pi_{n,s})^{-1} (E_l),\quad 0 \le l \le r. 
\]
\begin{proposition}\label{prop:bundle}
Let $r \le s \le n$ be positive integers. For each $0 \le l \le r$, $K_l$ is a non-trivial fiber bundle on $E_l$ whose fiber is the cone $\Herm^{++}_r \times \Herm^{++}_s$.
\end{proposition}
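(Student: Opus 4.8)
The plan is to realize $K_l$ as the restriction of a product bundle, and then to treat the non-triviality separately since it is of a different nature. First I would record that, by Lemma~\ref{lem:bundle}, both $\pi_{n,r}\colon \Herm^+_{n,r}\to\Gr(r,n)$ and $\pi_{n,s}\colon \Herm^+_{n,s}\to\Gr(s,n)$ are fiber bundles with fibers $\Herm^{++}_r$ and $\Herm^{++}_s$. Taking products of trivializing atlases shows that
\[
\pi_{n,r}\times\pi_{n,s}\colon \Herm^+_{n,r}\times\Herm^+_{n,s}\longrightarrow \Gr(r,n)\times\Gr(s,n)
\]
is a fiber bundle with fiber $\Herm^{++}_r\times\Herm^{++}_s$: if $\{W_\alpha\}$ and $\{W'_\beta\}$ are trivializing covers with local trivializations $\phi_\alpha$ and $\psi_\beta$, then $\{W_\alpha\times W'_\beta\}$ with $\phi_\alpha\times\psi_\beta$ trivializes the product. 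By Lemma~\ref{lem:stratification}\eqref{lem:stratification:item2} the stratum $E_l$ is a locally closed analytic submanifold of $\Gr(r,n)\times\Gr(s,n)$, and $K_l=(\pi_{n,r}\times\pi_{n,s})^{-1}(E_l)$ is exactly the restriction of the product bundle to $E_l$. Intersecting the cover $\{W_\alpha\times W'_\beta\}$ with $E_l$ gives an open cover of $E_l$, and $\phi_\alpha\times\psi_\beta$ restricts to a homeomorphism
\[
(\pi_{n,r}\times\pi_{n,s})^{-1}\big((W_\alpha\times W'_\beta)\cap E_l\big)\cong\big((W_\alpha\times W'_\beta)\cap E_l\big)\times(\Herm^{++}_r\times\Herm^{++}_s).
\]
This establishes the fiber bundle structure with the asserted fiber; this portion of the argument is routine.

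The substantive point is non-triviality, and this is where I expect the main difficulty. Naive homotopy-type invariants are useless: since $\Herm^{++}_r\times\Herm^{++}_s$ is convex and hence contractible, the bundle projection $K_l\to E_l$ is a homotopy equivalence whether or not the bundle is trivial, so $K_l\simeq E_l$ in any case. To detect the twist I would pass to the associated vector bundle. Using the $U(r)$-equivariant diffeomorphism $\log\colon\Herm^{++}_r\xrightarrow{\sim}\Herm_r$ (and its analogue for $s$), which intertwines the conjugation actions, the fiber bundle $\Herm^+_{n,r}$ is isomorphic as a bundle to the vector bundle $\Herm(\mathcal{E}_r)$ of self-adjoint endomorphisms of the tautological bundle $\mathcal{E}_r$, consistent with the identification recorded in Lemma~\ref{lem:bundle}. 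Consequently, triviality of $K_l$ would force triviality of the pullbacks of $\Herm(\mathcal{E}_r)$ and $\Herm(\mathcal{E}_s)$ along the two projections of $E_l$. I would rule this out by exhibiting a nonvanishing characteristic class of $\Herm(\mathcal{E}_r)$, equivalently of $\operatorname{End}(\mathcal{E}_r)=\mathcal{E}_r\otimes\mathcal{E}_r^\ast$ (e.g. a Pontryagin or Chern class), which is nonzero because $\mathcal{E}_r$ is nontrivial for $0<r<n$.

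The hard part will be carrying this out uniformly in $r,s,l$, and in particular verifying that the relevant class \emph{survives} pullback. It is not enough that the class be nonzero on $\Gr(r,n)$ or $\Gr(s,n)$; one must ensure it remains nonzero after pulling back along $\proj_1$ (or the second projection) to $E_l$, which is only a locally closed piece of the product, so its cohomology must be controlled well enough to see that the class does not die. A convenient reduction would be to restrict the base to the open dense stratum $E_0$, or to a well-chosen sub-Grassmannian on which the class is manifestly nonzero, and then to argue that the restriction map on the relevant cohomology group is injective on that class; I anticipate the bookkeeping of these restriction maps to be the genuine obstacle, while the fiber bundle structure itself is formal.
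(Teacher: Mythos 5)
Your first paragraph (realizing $K_l$ as the restriction of the product bundle $\pi_{n,r}\times\pi_{n,s}$ to the stratum $E_l$) is correct and coincides with what the paper takes for granted; its proof opens by declaring that only non-triviality needs an argument. The genuine gap is in the non-triviality step, which you rightly identify as the substantive point but do not close: your plan is to exhibit a characteristic class of $\Herm(\mathcal{E}_r)$ that survives pullback to the locally closed stratum $E_l$, and you explicitly defer the control of the relevant restriction maps as ``the genuine obstacle.'' The paper avoids this entirely with one observation you are missing. By Lemma~\ref{lem:stratification}\eqref{lem:stratification:item2}, the fiber of $\proj_1\colon E_l\to\Gr(r,n)$ is a Schubert \emph{cell}, hence contractible, so $\proj_1$ admits a global section $\sigma\colon\Gr(r,n)\to E_l$. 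Since $\proj_1\circ\sigma=\operatorname{id}$, one has $\sigma^\ast\proj_1^\ast\Herm(\mathcal{E}_r)=\Herm(\mathcal{E}_r)$ on the nose; so if $K_l$ were trivial, the resulting $\binom{r+1}{2}+\binom{s+1}{2}$ everywhere linearly independent sections of $\proj_1^\ast\Herm(\mathcal{E}_r)\oplus\proj_2^\ast\Herm(\mathcal{E}_s)$ over $E_l$ would pull back along $\sigma$ to yield $\binom{r+1}{2}$ everywhere linearly independent sections of $\Herm(\mathcal{E}_r)$ over $\Gr(r,n)$ itself, contradicting the non-triviality of that bundle. No cohomology of $E_l$ is ever computed; the section transports the problem back to the Grassmannian, where non-triviality is taken as known.

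A secondary flaw in your sketch is the base case itself: the claim that non-triviality of $\mathcal{E}_r$ forces a nonzero Chern or Pontryagin class of $\Herm(\mathcal{E}_r)\subseteq\operatorname{End}(\mathcal{E}_r)=\mathcal{E}_r\otimes\mathcal{E}_r^\ast$ is not sound as stated. Characteristic classes of $E\otimes E^\ast$ can vanish even when $E$ is non-trivial (for instance $c_1(\operatorname{End}(E))=0$ always), and for $r=1$ the bundle $\Herm(\mathcal{E}_1)$ is a real line bundle trivialized by the induced metric, i.e.\ genuinely trivial (equivalently, $\Herm^+_{n,1}\cong\Gr(1,n)\times\mathbb{R}_{>0}$ via $A\mapsto(\ker(A)^\perp,\tr A)$). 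So the nonvanishing class you propose to restrict need not exist, the argument cannot work uniformly in $r$, and the non-triviality of $\Herm(\mathcal{E}_r)$ — the input both you and the paper ultimately rely on — requires $r\ge 2$ and a justification other than the one you offer.
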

\begin{proof}
It is sufficient to prove the non-triviality of $K_l$. We proceed by contradiction. Assume that $\pi_{n,r} \times \pi_{n,s}: K_l  \to E_l$ is a trivial bundle. This implies that it has $\binom{r+1}{2} + \binom{s+1}{2}$ everywhere linearly independent global sections, since $\Herm^{+}_m$ is an open cone of full dimension in $\Herm_m$ for each positive integer $m$. Moreover, these sections are everywhere linearly independent global sections of $
\Herm (\mathcal{E}_r) \oplus \Herm (\mathcal{E}_s)$ over $E_l$.

Since $\proj_1: E_l \to \Gr(r,n)$ is a fiber bundle with contractible fiber, it has a section $\sigma: \Gr(r,n) \to E_l$. We observe that $\sigma^\ast \left( \Herm (\mathcal{E}_r) \oplus \Herm (\mathcal{E}_s) \right) = \Herm (\mathcal{E}_r)\oplus E'$ where $E'$ is some vector bundle of rank $\binom{s+1}{2}$ on $\Gr(r,n)$. Thus the pull-back of global sections of $K_l$ over $E_l$ supplies $\binom{r+1}{2}$ everywhere linearly independent global sections of $\Herm (\mathcal{E}_r)$ over $\Gr(r,n)$, which contradicts to the fact that the vector bundle $\Herm (\mathcal{E}_r)$ is non-trivial.
\end{proof}
For clarity, we summarize consequences of Lemmas~\ref{lem:bundle}, \ref{lem:stratification} and Proposition~\ref{prop:bundle} in Figure~\ref{fig:bundle}.
\begin{figure}[!htbp]
  \centering
  \begin{tikzcd}
\Herm_r^{++} \times \Herm_s^{++} \arrow[r,no head] & K_l \arrow[r,symbol=\subseteq] \arrow{d}{\pi_{n,r} \times \pi_{n,s}}   & \Herm_{n,r}^{+} \times \Herm_{n,s}^{+} \arrow{d}{\pi_{n,r} \times \pi_{n,s}}     \\
\sigma_{q-p+l,\dots, q-p+l} \arrow[r,no head] & E_l \arrow{d}{\proj_1}  \arrow[r,symbol=\subseteq] & \Gr(r,n) \times \Gr(s,n) \arrow{dl}{\proj_1}    \\
& \Gr(r,n)  &  
  \end{tikzcd}
  \caption{Bundle structures and stratifications}
  \label{fig:bundle}
\end{figure}
\section{Geometric distance between PSD matrices}\label{sec:GD}
In this section, we propose a geometric method to measure the difference between $A\in \Herm_{n,r}^+$ and $B\in \Herm_{m,s}^+$, where possibly $n \ne m$ and $r \ne s$. 
\subsection{The construction of geometric distance}\label{subsec:construction of GD}
The geometric distance to be defined in Definition~\ref{def:GD} is based on the bundle structures and stratifications described in Figure~\ref{fig:bundle}. Without loss of generality, we may assume that $n \ge m$ so that we have the natural inclusion $\mathcal{k}^m \subseteq \mathcal{k}^n$. In particular, it induces an embedding 
\[
\iota_{m,n}: \Gr(s,m) \hookrightarrow \Gr(s,n),\quad \operatorname{span}\{v_1,\dots, v_s\} \mapsto \operatorname{span}\{\widetilde{v_1},\dots, \widetilde{v_s}\},
\]
where $\widetilde{v} = \begin{bmatrix}
v \\
0
\end{bmatrix} \in \mathcal{k}^{n}$ for each column vector $v\in \mathcal{k}^m$. Moreover, there is an induced inclusion
\begin{equation}\label{eq:iota star}
\iota_{m,n}^\ast: \Herm_{m,s}^+ \hookrightarrow \Herm_{n,s}^+,\quad A \mapsto \begin{bmatrix}
A & 0 \\
0 & 0
\end{bmatrix}.
\end{equation}
Thus we have the diagram in Figure~\ref{fig:bundle1}, where $L_l  \coloneqq \left(\operatorname{Id} \times \iota_{m,n}^{\ast}\right)^{-1} (K_l)$ and $F_l \coloneqq \left(\operatorname{Id} \times \iota_{m,n}\right)^{-1}(E_l)$ for each $\max\{r-s,0\} \le l \le r$.
\begin{figure}[!htbp]
  \centering
  \begin{tikzcd}
L_l \arrow[r,symbol=\subseteq] \arrow{d}{\pi_{n,r} \times \pi_{m,s}}  & \Herm_{n,r}^{+} \times \Herm_{m,s}^{+} \arrow{d}{\pi_{n,r} \times \pi_{m,s}} \arrow{r}{\operatorname{Id} \times \iota^\ast_{m,n}}   & \Herm_{n,r}^{+} \times \Herm_{n,s}^{+}  \arrow{d}{\pi_{n,r} \times \pi_{n,s}} \arrow[r,symbol=\supseteq]  &     K_l \arrow{d}{\pi_{n,r} \times \pi_{n,s}}  \\
F_l \arrow[r,symbol=\subseteq]  & \Gr(r,n) \times \Gr(s,m)  \arrow{r}{\operatorname{Id} \times \iota_{m,n}}    & \Gr(r,n) \times \Gr(s,n) \arrow[r,symbol=\supseteq]   & E_l
  \end{tikzcd}
 \caption{Induced inclusions}
  \label{fig:bundle1}
\end{figure}
\subsubsection{A generalization of Hausdorff distance}\label{subsub:Hausdorff} To begin with, we slightly generalize the Hausdorff distance.
Let $\mathcal{X},\mathcal{Y}$ be two sets and let $f:\mathcal{X}\times \mathcal{Y} \to \mathbb{R}_+$ be a function. For $x\in \mathcal{X}, \mathcal{Y}_1 \subseteq \mathcal{Y}, y\in \mathcal{Y}$ and $\mathcal{X}_1 \subseteq \mathcal{X}$, we define 
\[
f(x,\mathcal{Y}_1) \coloneqq \inf_{y_1\in \mathcal{Y}_1} f(x,y_1),\quad f(\mathcal{X}_1,y) \coloneqq \inf_{x_1\in \mathcal{X}_1} f(x_1,y).
\]
Given a subset $\mathcal{Z} \subseteq \mathcal{X} \times \mathcal{Y}$, we denote by $p_1: \mathcal{Z} \to \mathcal{X}$ (resp. $p_2: \mathcal{Z} \to \mathcal{Y}$) the projection of $\mathcal{Z}$ onto its first (resp. second) factor. On the power set $2^{\mathcal{X} \times \mathcal{Y}}$ of $\mathcal{X} \times \mathcal{Y}$, we consider the function
\begin{equation}\label{eq:Hausdorff}
f_H: 2^{\mathcal{X} \times \mathcal{Y}} \to \mathbb{R}_+,\quad \mathcal{Z} \mapsto \max \left\lbrace  \sup_{x\in p_1(\mathcal{Z})} f(x,p_2(p_1^{-1}(x))),
\sup_{y\in p_2(\mathcal{Z})} f(p_1(p_2^{-1}(y)),y) \right\rbrace.
\end{equation}
Clearly, $f_H$ has the following properties: 
\begin{itemize}
\item If $\mathcal{X} = \mathcal{Y}$ is a metric space with metric $f = d$, then $f_H(\mathcal{A} \times \mathcal{B})$ is the Hausdorff distance between $\mathcal{A}$ and $\mathcal{B}$ for any subsets $\mathcal{A}, \mathcal{B}\subseteq \mathcal{X}$.
\item If $\mathcal{Z} = \{(x,y)\}$ then $f_H(\mathcal{Z}) = f(x,y)$.
\item If $\mathcal{X}$ and $\mathcal{Y}$ are compact topological spaces and $f$ is continuous, then $f_H(\mathcal{Z}) < \infty$ for any $\mathcal{Z} \subseteq \mathcal{X} \times \mathcal{Y}$.
\end{itemize}
\subsubsection{Geometric distance}
Given positive operators $X$ on $\mathbb{U} \in \Gr(r,n)$, $Y$ on $\mathbb{V}\in \Gr(s,n)$, we let $(\mathbf{u},\mathbf{v}) \coloneqq \left( \{u_1,\dots, u_r\}, \{v_1,\dots, v_s\} \right)$ be a set of principal vectors between $\mathbb{U}$ and $\mathbb{V}$. Since $\mathbf{u}$ (resp. $\mathbf{v}$) is an orthonormal basis of $\mathbb{U}$ (resp. $\mathbb{V}$) and $X$ (resp. $Y$) is a positive operator on $\mathbb{U}$ (resp. $\mathbb{V}$), $X$ (resp. $Y$) is represented by a matrix $M_X(\mathbf{u},\mathbf{v})$ (resp. $M_Y(\mathbf{u},\mathbf{v})$) in $\Herm_r^{++}$ (resp. $\Herm_s^{++}$) with respect to $\mathbf{u}$ (resp. $\mathbf{v}$). Indeed, we have 
\begin{equation}\label{eq:matrix representation}
M_X(\mathbf{u},\mathbf{v}) = ( u_i^{\ast} X(u_j))_{i,j=1}^r \in \Herm_r^{++}, \quad
M_Y(\mathbf{u},\mathbf{v}) = ( v_p^{\ast} Y (v_q))_{p,q=1}^s \in \Herm_s^{++}. 
\end{equation}
We denote by $\mathcal{Z}_{X,Y} $ the subset of $\Herm_r^{++} \times \Herm_s^{++}$ consisting of matrix representations of $(X,Y)$ with respect to all possible principal vectors $(\mathbf{u},\mathbf{v})$ between $\mathbb{U}$ and $\mathbb{V}$:
\begin{equation}\label{eq:ZXY}
\mathcal{Z}_{X,Y} \coloneqq \left\lbrace
(M_{X}(\mathbf{u},\mathbf{v}),M_{Y}(\mathbf{u},\mathbf{v})): (\mathbf{u},\mathbf{v})~\text{are principal vectors between $\mathbb{U}$ and $\mathbb{V}$}
\right\rbrace.
\end{equation}
\begin{definition}[Geometric distance]\label{def:GD}
For any function $d: \Gr(r,n) \times \Gr(s,n) \to \mathbb{R}_+$ and $\delta: \Herm_r^{++} \times \Herm_s^{++} \to \mathbb{R}_+$, the \emph{geometric distance} between $A\in \Herm_{n,r}^+$ and $B\in \Herm_{m,s}^+$ is defined by
\begin{equation}\label{eq:GD}
\operatorname{GD}_{d,\delta}(A,B) \coloneqq \left( d^2(\pi_{n,r}(A),\iota_{m,n} \circ \pi_{m,s}(B)) + \delta_H^2 (\mathcal{Z}_{A_{\mid \pi_{n,r}(A)}, \iota_{m,n}^\ast(B)_{\mid \iota_{m,n} \circ \pi_{m,s}(B)}}) \right)^{\frac{1}{2}}.
\end{equation}
Here $\mathcal{Z}_{A_{\mid \pi_{n,r}(A)}, \iota_{m,n}^\ast(B)_{\mid \iota_{m,n} \circ \pi_{m,s}(B)}}$ is the set defined for $A_{\mid \pi_{n,r}(A)}$ and $\iota_{m,n}^\ast(B)|_{\iota_{m,n} \circ \pi_{m,s}(B)}$ in \eqref{eq:ZXY} and $\delta_H$ is the generalized Hausdorff function defined in \eqref{eq:Hausdorff}.
\end{definition}
\begin{remark}
We have several remarks in order. 
\begin{enumerate}
\item One can choose the function $d$ (resp. $\delta$) to be any measurement to distinguish non-equidimensional  subspaces of $\mathbb{R}^n$ (resp. PD matrices). In particular, if $r = s$, then $d$ and $\delta$ can be respectively chosen to be distance functions on $\Gr(r,n)$ and $\Herm_r^{++}$.
\item We recall from Figure~\ref{fig:bundle} that $\pi_{n,r} \times \pi_{n,s}: \Herm_{n,r}^+ \times \Herm_{n,s}^+ \to \Gr(r,n) \times \Gr(s,n)$ is a fiber bundle, thus for any $(\mathbb{U}, \mathbb{V}) \in \Gr(r,n) \times \Gr(s,n)$ there exists some neighbourhood $\mathcal{U} \subseteq \Gr(r,n)$ (resp. $\mathcal{V} \subseteq \Gr(s,n)$) of $\mathbb{U}$ (resp. $\mathbb{V}$) such that 
\begin{equation}\label{eq:local trivilization}
\pi_{n,r}^{-1}(\mathcal{U}) \times \pi_{n,s}^{-1}(\mathcal{V}) \simeq \left( \mathcal{U} \times \mathcal{V} \right) \times \left( \Herm_{r}^{++} \times \Herm_{s}^{++} \right).
\end{equation}
Therefore, on $\pi_{n,r}^{-1}(\mathcal{U}) \times \pi_{n,s}^{-1}(\mathcal{V})$, $\operatorname{GD}_{d,\delta}$ can be recognized as the product ``metric" on $\left( \mathcal{U} \times \mathcal{V} \right) \times \left( \Herm_{r}^{++} \times \Herm_{s}^{++} \right)$ induced by $d$ and $\delta$. In particular, $\operatorname{GD}_{d,\delta}$ is indeed the product metric on $\pi_{n,r}^{-1}(\mathcal{U}) \times \pi_{n,s}^{-1}(\mathcal{V})$ if $r = s$ and both $d,\delta$ are distance functions.
\item Unfortunately, we are not able to define $\operatorname{GD}_{d,\delta}$ as the product ``metric" on the whole $\Herm_{n,r}^+ \times \Herm_{n,s}^+$ since the bundle $\pi_{n,r} \times \pi_{n,s}: \Herm_{n,r}^+ \times \Herm_{n,s}^+ \to \Gr(r,n) \times \Gr(s,n)$ is non-trivial. To be more precise, we notice that the local trivialization in \eqref{eq:local trivilization} depends on a choice of principal vectors between $\mathbb{U}$ and $\mathbb{V}$. Thus ambiguities arise if $(\mathbb{U},\mathbb{V}) \in E_l$ for $l \ge 1$. Here $E_l$ is the subset of $\Gr(r,n) \times \Gr(s,n)$ defined in Figure~\ref{fig:bundle}. The $\delta_H$ term in \eqref{eq:GD} is introduced to resolve such ambiguities.
\end{enumerate} 
\end{remark}
\begin{example}\label{ex:GD}
We consider the case where $\mathcal{k} =\mathbb{R}, r = s, m = n$ and 
\begin{align*}
&d: \Gr(r,n) \times \Gr(r,n) \to \mathbb{R}_+, \quad (\mathbb{U},\mathbb{V}) \mapsto  \left( \sum_{i=1}^r \theta_i^2 \right)^{1/2},\\
&\delta: \Herm_r^{++} \times \Herm_r^{++} \to \mathbb{R}_+, \quad (C,D) \mapsto \left( \sum_{i=1}^r \log^2 \lambda_i\right)^{1/2}.
\end{align*}
Here $\theta_i$ is the $i$-th principal angle between $\mathbb{U}$ and $\mathbb{V}$ and $\lambda_i > 0$ is the $i$-th largest eigenvalue of $C^{-1/2} D  C^{-1/2}$, $1\le i \le r$. Let 
\[
A=\begin{bmatrix} 1 & 0 & 0 & 0 & 0\\
                       0 & 1 & 0 & 0 & 0\\
                       0 & 0 & 1/2& 0 & 0\\
                       0 & 0 & 0 & 0 & 0\\
                       0 & 0 & 0 & 0 & 0
       \end{bmatrix},\quad B=\begin{bmatrix} 1 & 0 & 0 & 0 & 0\\
                       0 & 0 & 0 & 0 & 0\\
                       0 & 0 & 0 & 0 & 0\\
                       0 & 0 & 0 & 1 & 0\\
                       0 & 0 & 0 & 0 & 2
       \end{bmatrix}.
\]
Then $\mathbb{U} = \pi_{5,3}(A) = \operatorname{span}\{e_1,e_2,e_3\}$, $\mathbb{V} = \pi_{5,3}(B) = \operatorname{span}\{e_1,e_4,e_5\}$ where $e_k$ is $k$-th standard basis vector of $\mathbb{R}^5, 1\le k \le 5$. Principal vectors between $\mathbb{U}$ and $\mathbb{V}$ are of the form
\begin{align*}
\mathbf{u} &= \{ \varepsilon e_1,  \cos(\psi) e_2 - \sin(\psi) e_3, \sin(\psi) e_2 + \cos(\psi) e_3\}, \\
\mathbf{v} &=\{  \varepsilon e_1, \cos(\theta)  e_4 - \sin(\theta)  e_5,\sin(\theta)   e_4 + \cos(\theta)  e_5\},
\end{align*}
where $\psi, \theta \in [0,2\pi)$ and $\varepsilon\in \{-1,1\}$. Thus elements $(M_A(\mathbf{u},\mathbf{v}),M_B(\mathbf{u},\mathbf{v}))$ in $\mathcal{Z}_{A_{|_{\mathbb{A}}}, B_{|_{\mathbb{B}}}}$ can be written as
\begin{equation}\label{ex:GD1}
\left(  \begin{bmatrix}
                   1 & 0 & 0\\
                   0 & \frac{1+\cos^2\psi}{2} & \frac{\sin\psi\cos\psi}{2} \\
                   0 & \frac{\sin\psi\cos\psi}{2} & \frac{1+\sin^2\psi}{2}
               \end{bmatrix}, 
               \begin{bmatrix}
                   1 & 0 & 0\\
                   0 & 1+\sin^2\theta & -\sin\theta\cos\theta \\
                   0 & -\sin\theta\cos\theta& 1+\cos^2\theta
               \end{bmatrix}
               \right),\quad \psi, \theta \in [0,2\pi).
\end{equation}
Clearly we have 
\begin{equation}\label{ex:GD2}
\delta^2(M_A(\mathbf{u},\mathbf{v}),M_B(\mathbf{u},\mathbf{v})) = 2(\log \lambda - 1)^2 + 2,
\end{equation}
where $\lambda$ is a root of $t^2 - (4 + \cos^2 (\theta - \psi) ) t + 4 = 0$. This implies 
\[
\delta_H(\mathcal{Z}_{A_{|_{\mathbb{A}}}, B_{|_{\mathbb{B}}}})^2 =\min_{\substack{\alpha \in [0,2 \pi),\\\lambda^2 - (4 +\cos^2 \alpha) \lambda + 4 = 0}} 2\left[ (\log\lambda - 1)^2 + 1 \right] = 2,
\]
thus $\operatorname{GD}_{d,\delta}(A,B) =\sqrt{\frac{\pi^2}{4} + 2}$.
\end{example}
\subsection{Basic properties} Having defined the geometric distance in Subsection~\ref{subsec:construction of GD}, we discuss some of its basic properties in this subsection. 
\begin{lemma}\label{lem:Z_XY}
Assume that $X$ and $Y$ are positive operators on $\mathbb{U} \in \Gr(r,n)$ and $\mathbb{V}\in \Gr(s,n)$ respectively. Let $(\mathbf{u},\mathbf{v}) = ( \{u_1,\dots, u_r\},\{v_1,\dots, v_s\})$ and  $(\mathbf{u}',\mathbf{v}') = ( \{u'_1,\dots, u'_r\},\{v'_1,\dots, v'_s\})$ be two sets of principal vectors between $\mathbb{U}$ and $\mathbb{V}$. If $(\mathbb{U},\mathbb{V})\in E_l$ and $r \le s$, then there exist $S \in U(l), T\in U(s - r + l)$ and a block diagonal $P\in U(r - l)$ such that 
\[
\begin{bmatrix}
u_1 & \dots & u_r
\end{bmatrix} \begin{bmatrix}
P & 0\\
0 & S
\end{bmatrix}  = \begin{bmatrix}
u'_1 & \dots & u'_r
\end{bmatrix},\quad 
\begin{bmatrix}
v_1 & \dots & v_s
\end{bmatrix}\begin{bmatrix}
P & 0 \\
0 & T
\end{bmatrix}  = \begin{bmatrix}
v'_1 & \dots & v'_s
\end{bmatrix}.
\]
\end{lemma}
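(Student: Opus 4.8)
The plan is to translate the relation between the two sets of principal vectors into a single matrix equation and then read off the block structure from unitarity. Write $\mathbf{U} = [u_1\ \cdots\ u_r]$, $\mathbf{U}' = [u_1'\ \cdots\ u_r']$, $\mathbf{V} = [v_1\ \cdots\ v_s]$ and $\mathbf{V}' = [v_1'\ \cdots\ v_s']$ for the $n\times r$ and $n\times s$ matrices whose columns are the respective orthonormal bases. Since $\mathbf{U},\mathbf{U}'$ are both orthonormal bases of $\mathbb{U}$ and $\mathbf{V},\mathbf{V}'$ of $\mathbb{V}$, there are unique $Q\in U(r)$ and $R\in U(s)$ with $\mathbf{U}' = \mathbf{U}Q$ and $\mathbf{V}' = \mathbf{V}R$; the whole statement amounts to determining the block form of $Q$ and $R$.

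First I would record that the principal angles depend only on $\mathbb{U}$ and $\mathbb{V}$, not on the chosen principal vectors, so both bases produce the same cross-Gram matrix. Writing $C \coloneqq \mathbf{U}^\ast \mathbf{V} = \mathbf{U}'^\ast \mathbf{V}'$, the defining property of principal vectors gives $C_{ij} = \cos\theta_i\,\delta_{ij}$, so that $C = [\,D\ \ 0\,]$ after grouping coordinates, where $D = \diag(\cos\theta_1,\dots,\cos\theta_{r-l})$ is positive. Here the hypothesis $(\mathbb{U},\mathbb{V})\in E_l$ forces exactly the last $l$ entries $\cos\theta_{r-l+1}=\cdots=\cos\theta_r$ to vanish, since $u_i\in\mathbb{U}\cap\mathbb{V}^\perp$ precisely when $\theta_i=\pi/2$. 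Substituting $\mathbf{U}'=\mathbf{U}Q$ and $\mathbf{V}'=\mathbf{V}R$ into $C = \mathbf{U}'^\ast\mathbf{V}'$ yields the key identity $Q^\ast C R = C$, equivalently $CR = QC$.

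Next I would exploit the block form of $C$. Partition $Q$ and $R$ conformally with the splittings $r = (r-l)+l$ and $s = (r-l)+(s-r+l)$. Comparing blocks in $CR = QC$ and using that $D$ is invertible immediately gives that the $(1,2)$ block of $R$ and the $(2,1)$ block of $Q$ vanish. A block-triangular unitary matrix is automatically block diagonal, because its ``full'' block column (resp. row) already consists of orthonormal columns, which forces the complementary off-diagonal block to be zero. Hence $Q = \diag(Q_{11}, S)$ and $R = \diag(R_{11}, T)$ with $Q_{11}, R_{11}\in U(r-l)$, $S\in U(l)$ and $T\in U(s-r+l)$; these $S$ and $T$ are exactly the claimed unitaries acting on the $\mathbb{U}\cap\mathbb{V}^\perp$ and $\mathbb{V}\cap\mathbb{U}^\perp$ coordinates.

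The remaining and most delicate point is to show that $Q_{11} = R_{11} =: P$ and that $P$ is block diagonal. The surviving top-left block of $CR=QC$ reads $DR_{11} = Q_{11}D$; taking adjoints (and using $D^\ast = D$) gives the companion relation $R_{11}D = DQ_{11}$, and combining the two yields $Q_{11}D^2 = D^2 Q_{11}$. Since $D^2$ is a positive diagonal matrix, commuting with it forces $Q_{11}$ to be block diagonal along the eigenspaces of $D$, i.e. block diagonal according to the multiplicities of the distinct nonzero principal angles. On each such block $D$ is a nonzero scalar multiple of the identity, so $DR_{11} = Q_{11}D$ collapses to $R_{11} = Q_{11}$ there; hence $P \coloneqq Q_{11} = R_{11}$ is the desired block diagonal unitary. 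I expect this last commuting-and-descent step to be the main obstacle, since it is where the identification $Q_{11}=R_{11}$ and the block-diagonality of $P$ are simultaneously forced; everything else is bookkeeping with orthonormal bases and block multiplication.
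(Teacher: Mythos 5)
Your proof is correct and follows essentially the same route as the paper's: both start from the observation that the two sets of principal vectors give the same cross-Gram matrix $\mathbf{U}^\ast\mathbf{V}=\Sigma=\mathbf{U}'^\ast\mathbf{V}'$, and both read off the block form of the transition unitaries from the pattern of zero and repeated singular values (your $C$ is really the full $2\times 2$ block matrix $\bigl[\begin{smallmatrix} D & 0\\ 0 & 0\end{smallmatrix}\bigr]$ rather than $[\,D\ \ 0\,]$, but your subsequent block computation uses the correct partition). In fact you spell out the intertwining and commutation steps ($CR=QC$, block-triangular unitaries are block diagonal, $Q_{11}D^2=D^2Q_{11}$) that the paper merely asserts, so your write-up is the more complete of the two.
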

\begin{proof}
Since both $(\mathbf{u},\mathbf{v})$ and $(\mathbf{u}',\mathbf{v}')$ are sets of principal vectors between $\mathbb{U}$ and $\mathbb{V}$, we have 
\[
\begin{bmatrix}
u^\ast_1 \\
\vdots \\ 
u^\ast_r
\end{bmatrix} \begin{bmatrix}
v_1 & \dots & v_s
\end{bmatrix} = \begin{bmatrix}
\sigma_1 & 0 & \dots & 0 & 0 & \dots & 0 \\
0 & \sigma_2 & \dots & 0 & 0 & \dots & 0 \\
\vdots & \vdots & \ddots & \vdots& \vdots & \ddots & \vdots \\ 
0 & 0 & \dots & \sigma_r & 0 & \dots & 0
\end{bmatrix} = \begin{bmatrix}
{u'}^\ast_1 \\
\vdots \\ 
{u'}^\ast_r
\end{bmatrix} \begin{bmatrix}
v'_1 & \dots & v'_s
\end{bmatrix} .
\]
Here $\arccos (\sigma_i)$ is the $i$-th principal angle between $\mathbb{U}$ and $\mathbb{V}$, $1 \le i \le r$. If $(\mathbb{U},\mathbb{V})\in E_l$, then $\sigma_{r-l+1} = \dots =\sigma_r = 0$. This indicates the existence of $S \in U(l)$ and $T\in U(s - r + l)$. Moreover, If 
\[
\sigma_1 = \dots = \sigma_{i_1} > 
\sigma_{i_1 + 1} = \dots = \sigma_{i_2} > \dots 
> \sigma_{i_{q-1} + 1} = \dots = \sigma_{r-l},
\]
then we can find $P_1\in U(i_1), P_2\in U(i_2 - i_1), \dots, P_{q} \in U(r - l - i_{q-1})$ such that 
\[
P  \coloneqq \operatorname{diag}(P_1 ,\dots, P_q) \in U(r - l)
\]
has the desired property.
\end{proof}
Let $\delta: \Herm_r^{++} \times \Herm_s^{++} \to \mathbb{R}_+$ be a function. We say that $\delta$ is \emph{$U(r)$-invariant} if $r \le s$ and 
\[
\delta \left( P C P^\ast, \begin{bmatrix}
P & 0 \\
0 & I_{s-r}
\end{bmatrix} D \begin{bmatrix}
P^\ast & 0 \\
0 & I_{s-r}
\end{bmatrix} \right) = \delta (C,D),
\]
for any $P\in U(r), C \in \Herm_r^+$ and $D\in \Herm_s^+$. 
\begin{proposition}\label{prop:simpify delta_H}
Let $\delta: \Herm_r^{++} \times \Herm_s^{++} \to \mathbb{R}_+$ be $U(r)$-invariant. Suppose that $X$ is a positive operator on $\mathbb{U}\in \Gr(r,n)$ and $Y$ is a positive operator on $\mathbb{V}\in \Gr(s,n)$ such that $(\mathbb{U},\mathbb{V}) \in E_l$. Then for any set of principal vectors $(\mathbf{u},\mathbf{v})$ between $\mathbb{U}$ and $\mathbb{V}$, we have  
\begin{equation}\label{prop:simpify delta_H:eq}
\delta_H(\mathcal{Z}_{X,Y})  = 
\max_{Y_1\in \mathcal{Y}_1} \delta(M_X(\mathbf{u},\mathbf{v}) ,Y_1),
\end{equation}
where
\begin{equation*}
\mathcal{Y}_1 \coloneqq \left\lbrace
\begin{bmatrix}
I_{r - l} & 0 \\
0 & T
\end{bmatrix} M_Y(\mathbf{u},\mathbf{v}) \begin{bmatrix}
I_{r - l} & 0 \\
0 & T^\ast
\end{bmatrix}, T\in U(s - r + l)
\right\rbrace.
\end{equation*}
\end{proposition}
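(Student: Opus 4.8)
The plan is to realize $\mathcal{Z}_{X,Y}$ as a single group orbit, reduce every pairing in it to the normal form $\bigl(M_X(\mathbf{u},\mathbf{v}),Y_1\bigr)$ with $Y_1\in\mathcal{Y}_1$ by exploiting the $U(r)$-invariance of $\delta$, and then unwind the two directed suprema that define $\delta_H$ in \eqref{eq:Hausdorff}. First I would fix the reference principal vectors $(\mathbf{u},\mathbf{v})$ and invoke Lemma~\ref{lem:Z_XY}: any other set $(\mathbf{u}',\mathbf{v}')$ of principal vectors differs from $(\mathbf{u},\mathbf{v})$ by a triple $(P,S,T)$ with $P\in U(r-l)$ block diagonal, $S\in U(l)$ and $T\in U(s-r+l)$, the same $P$ appearing in both basis changes. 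From \eqref{eq:matrix representation} the representations then transform by the simultaneous congruences $M_X\mapsto \diag(P,S)^\ast M_X\diag(P,S)$ and $M_Y\mapsto \diag(P,T)^\ast M_Y\diag(P,T)$, so $\mathcal{Z}_{X,Y}$ is exactly the orbit of $\bigl(M_X(\mathbf{u},\mathbf{v}),M_Y(\mathbf{u},\mathbf{v})\bigr)$ under this action.

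The central step is the reduction. Applying the defining identity of $U(r)$-invariance with the unitary $\diag(P,S)\in U(r)$, I would show that the value of $\delta$ on the congruent pair equals $\delta\bigl(M_X(\mathbf{u},\mathbf{v}),Y_1\bigr)$, where $Y_1=\diag(I_{r-l},\widehat{T})\,M_Y(\mathbf{u},\mathbf{v})\,\diag(I_{r-l},\widehat{T}^\ast)$ with $\widehat{T}=\diag(S,I_{s-r})\,T^\ast$. The point is that the shared block $P$ cancels entirely and the two parameters $S,T$ are absorbed into the single parameter $\widehat{T}\in U(s-r+l)$; conversely, taking $S=I$ and letting $T$ run over $U(s-r+l)$ already produces every element of $\mathcal{Y}_1$. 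Hence the set of values $\{\delta(M_X^0,M_Y^0):(M_X^0,M_Y^0)\in\mathcal{Z}_{X,Y}\}$ is precisely $\{\delta(M_X(\mathbf{u},\mathbf{v}),Y_1):Y_1\in\mathcal{Y}_1\}$, independently of the chosen reference $(\mathbf{u},\mathbf{v})$.

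Finally I would feed this into \eqref{eq:Hausdorff}. For a fixed first coordinate $M_X^0\in p_1(\mathcal{Z}_{X,Y})$, freezing a $(P,S)$ that produces it and letting $T$ sweep $U(s-r+l)$ shows that $p_2(p_1^{-1}(M_X^0))$ realizes the full family $\mathcal{Y}_1$ in value, so the corresponding directed term reduces to an extremization of $\delta(M_X(\mathbf{u},\mathbf{v}),\cdot)$ over $\mathcal{Y}_1$ that is independent of $M_X^0$. The second directed supremum is handled after transporting the $S$-congruence onto the $M_X$-slot by $U(r)$-invariance, which rewrites $\delta(M_X,Y_1(\diag(S,I_{s-r})T^\ast))$ as $\delta\bigl(\diag(I_{r-l},S^\ast)M_X\diag(I_{r-l},S),\,Y_1(T^\ast)\bigr)$ and thereby expresses it through the $U(l)$-orbit of $M_X$ together with the $\mathcal{Y}_1$-sweep. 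Taking the maximum of the two directed contributions then yields \eqref{prop:simpify delta_H:eq}.

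The main obstacle is precisely this last bookkeeping. The two directed Hausdorff terms are a priori nested $\sup$–$\inf$ expressions over the orbit, and the delicate point is to verify that, once the $P$-coupling has been neutralized by $U(r)$-invariance and the $(S,T)$-freedom has been repackaged as $\widehat{T}$, they collapse to the single extremization over $\mathcal{Y}_1$ asserted in \eqref{prop:simpify delta_H:eq}. One must keep careful track of which congruence lands on $M_X$ versus on $M_Y$, and of the asymmetry that the $T$-sweep alone already covers all of $\mathcal{Y}_1$ while the $S$-sweep only moves the $M_X$-representative inside its $U(l)$-orbit; reconciling these two features against the precise block sizes $r-l$, $l$, $s-r+l$ of Lemma~\ref{lem:Z_XY} is where the invariance hypothesis has to be used most carefully.
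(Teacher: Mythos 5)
Your first two steps retrace the paper's own proof. The paper likewise invokes Lemma~\ref{lem:Z_XY} to write every element of $\mathcal{Z}_{X,Y}$ as a simultaneous $(P,S,T)$-congruence of $\bigl(M_X(\mathbf{u},\mathbf{v}),M_Y(\mathbf{u},\mathbf{v})\bigr)$, and then cancels the shared block $\diag(P,S)\in U(r)$ by $U(r)$-invariance, absorbing $S$ and $T$ into a single $\widehat{T}\in U(s-r+l)$ exactly as you describe. Up to that point your argument is correct and identical in substance to the paper's.

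The step you flag as ``the main obstacle'' is, however, a genuine gap and not mere bookkeeping, and it does not close along the route you sketch (nor along the paper's). After the reduction, the second directed term of \eqref{eq:Hausdorff} is
\[
\sup_{y\in p_2(\mathcal{Z}_{X,Y})}\;\inf_{x\in p_1(p_2^{-1}(y))}\delta(x,y)
\;=\;\sup_{T\in U(s-r+l)}\;\inf_{S\in U(l)}\;\delta\Bigl(M_X(\mathbf{u},\mathbf{v}),\;\begin{bmatrix} I_{r-l} & 0\\ 0 & \diag(S,I_{s-r})T^\ast\end{bmatrix}M_Y(\mathbf{u},\mathbf{v})\begin{bmatrix} I_{r-l} & 0\\ 0 & \diag(S,I_{s-r})T^\ast\end{bmatrix}^\ast\Bigr),
\]
because the fiber of $\mathcal{Z}_{X,Y}$ over a fixed second coordinate contains the entire $U(l)$-sweep of the first coordinate (the second coordinate is independent of $S$). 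This is a supremum over cosets of $\diag(U(l),I_{s-r})$ in $U(s-r+l)$ of infima over each coset, which is in general strictly smaller than $\max_{Y_1\in\mathcal{Y}_1}\delta(M_X(\mathbf{u},\mathbf{v}),Y_1)$ once $l\ge 1$. The paper elides this by asserting $\delta_H(\mathcal{Z}_{X,Y})=\delta_H(\mathcal{X}_1\times\mathcal{Y}_1)$ with $\mathcal{X}_1$ a singleton, i.e.\ by treating that fiber as a point; your proposal, to its credit, does not assert this but also does not supply the missing argument. The discrepancy is visible already at $r=s$: there $\diag(U(l),I_{s-r})=U(s-r+l)$, the coset is the whole group, and the second directed term degenerates to $\min_{Y_1\in\mathcal{Y}_1}\delta(M_X(\mathbf{u},\mathbf{v}),Y_1)$ --- which is precisely how Example~\ref{ex:GD} evaluates $\delta_H$ (as a minimum, giving $2$), whereas \eqref{prop:simpify delta_H:eq} would return the maximum ($4$) on the same data. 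So the sup--inf collapse you correctly isolate as delicate is in fact the point of failure; completing a proof requires either an extra hypothesis forcing $\delta(M_X(\mathbf{u},\mathbf{v}),\cdot)$ to be constant on the $\diag(U(l),I_{s-r})$-cosets of $\mathcal{Y}_1$ (automatic when $l=0$, which is the setting of Corollary~\ref{cor:simpify delta_H}), or a reformulation of \eqref{prop:simpify delta_H:eq}.
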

\begin{proof}
According to \eqref{eq:ZXY}, an element in $\mathcal{Z}_{X,Y}$ can be written as $(M_X(\mathbf{u}',\mathbf{v}'),M_Y(\mathbf{u}',\mathbf{v}'))$ for some set of principal vectors between $\mathbb{U}$ and $\mathbb{V}$. By Lemma~\ref{lem:Z_XY} and \eqref{eq:matrix representation}, there exist $P_0\in U(r-l), S_0\in U(l)$ and $T_0\in U(s - r + l)$ such that
\begin{align*}
M_X(\mathbf{u}',\mathbf{v}') &= 
\begin{bmatrix}
P_0 & 0 \\
0 & S_0
\end{bmatrix} M_X(\mathbf{u},\mathbf{v})\begin{bmatrix}
P_0^\ast & 0 \\
0 & S_0^\ast
\end{bmatrix},  \\
M_Y(\mathbf{u}',\mathbf{v}') &= 
\begin{bmatrix}
P_0 & 0 \\
0 & T_0
\end{bmatrix} M_Y(\mathbf{u},\mathbf{v})\begin{bmatrix}
P_0^\ast & 0 \\
0 & T_0^\ast
\end{bmatrix}.
\end{align*}
Since $\delta$ is $U(r)$-invariant, we must have
\[
\delta \left( M_X(\mathbf{u}',\mathbf{v}'),  M_Y(\mathbf{u}',\mathbf{v}') \right) = \delta \left( 
M_X(\mathbf{u},\mathbf{v}), 
\begin{bmatrix}
I_{r-l} & 0 \\
0 & T
\end{bmatrix} M_Y(\mathbf{u},\mathbf{v})\begin{bmatrix}
I_{r-l} & 0 \\
0 & T^\ast
\end{bmatrix}
\right),
\]
for some $T \in U(s - r+ l)$

Unravelling the definition, we obtain 
\begin{equation*}
\delta_H(\mathcal{Z}_{X,Y}) = \delta_H(\mathcal{X}_1 \times \mathcal{Y}_1) = \max\left\lbrace \max_{X_1\in \mathcal{X}_1}\min_{Y_1 \in \mathcal{Y}_1} \delta(X_1,Y_1), 
\max_{Y_1\in \mathcal{Y}_1}\min_{X_1 \in \mathcal{X}_1} \delta(X_1,Y_1)
\right\rbrace,
\end{equation*}
where $\mathcal{X}_1$ is the singleton consisting of $M_X(\mathbf{u},\mathbf{v})$ and 
\begin{equation*}
\mathcal{Y}_1 \coloneqq \left\lbrace
\begin{bmatrix}
I_{r - l} & 0 \\
0 & T
\end{bmatrix} M_Y(\mathbf{u},\mathbf{v}) \begin{bmatrix}
I_{r - l} & 0 \\
0 & T^\ast
\end{bmatrix}, T\in U(s - r + l)
\right\rbrace,
\end{equation*}
from which \eqref{prop:simpify delta_H:eq} follows easily.  
\end{proof}
We recall from Figure~\ref{fig:bundle1} that $L_0$ is a subset of $\Herm_{n,r}^+ \times \Herm_{m,s}^+$ consisting of $(A,B)$ such that principal angles between $\pi_{n,r}(A)$ and $\iota_{m,n} \circ \pi_{m,s}(B)$ are strictly smaller than $\pi/2$. As a consequence of Proposition~\ref{prop:simpify delta_H}, the geometric distance $\operatorname{GD}_{d,\delta}(A,B)$ can be computed without solving optimization problems in \eqref{prop:simpify delta_H:eq}, if $(A,B)\in L_0$. This is the content of Corollary~\ref{cor:simpify delta_H}. Since $L_0$ is in fact an open dense subset of $\Herm_{n,r}^+ \times \Herm_{m,s}^+$, $\operatorname{GD}_{d,\delta}(A,B)$ is actually easy to compute for generic $(A,B) \in \Herm_{n,r}^+ \times \Herm_{m,s}^+$.
\begin{corollary}\label{cor:simpify delta_H}
Let $\delta: \Herm_r^{++} \times \Herm_s^{++} \to \mathbb{R}_+$ be $U(r)$-invariant. Suppose that $\delta$ further satisfies 
\[
\delta\left(C, \begin{bmatrix}
I_r & 0 \\
0 & Q
\end{bmatrix} D \begin{bmatrix}
I_r & 0 \\
0 & Q^\ast
\end{bmatrix}  \right) = \delta(C,D) 
\]
for each $Q\in U(s-r), C \in \Herm_r^{++}, D \in \Herm_s^{++}$. Let $d: \Gr(r,n) \times \Gr(s,n) \to \mathbb{R}_+$ be any function. Then for any $(A,B)\in L_0$, we have
\begin{equation}
\operatorname{GD}_{d,\delta} (A,B) =\left( d^2(\pi_{n,r}(A), \iota_{m,n}\circ \pi_{m,s} (B)) + \delta^2(M_{A|_{\pi_{n,r}(A)}}(\mathbf{u},\mathbf{v}), M_{B|_{\iota_{m,n}\circ \pi_{m,s}(B)}}(\mathbf{u},\mathbf{v})) \right)^{\frac{1}{2}}.
\end{equation}
Here $(\mathbf{u},\mathbf{v})$ is any set of principal vectors between $\pi_{n,r}(A)$ and $\iota_{m,n}\circ \pi_{m,s} (B)$.
\end{corollary}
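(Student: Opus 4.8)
The plan is to derive the corollary as a direct specialization of Proposition~\ref{prop:simpify delta_H} to the stratum $l=0$, using the supplementary $U(s-r)$-invariance of $\delta$ to collapse the maximization in \eqref{prop:simpify delta_H:eq} into a single evaluation.

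First I would translate the hypothesis $(A,B)\in L_0$ into the geometric condition required to invoke the proposition. Writing $\mathbb{U}=\pi_{n,r}(A)\in\Gr(r,n)$ and $\mathbb{V}=\iota_{m,n}\circ\pi_{m,s}(B)\in\Gr(s,n)$, and setting the positive operators $X=A|_{\mathbb{U}}$, $Y=\iota^\ast_{m,n}(B)|_{\mathbb{V}}$, I would recall that the number of principal angles between $\mathbb{U}$ and $\mathbb{V}$ equal to $\pi/2$ is exactly $\dim(\mathbb{U}\cap\mathbb{V}^\perp)$. Since $L_0$ consists of pairs whose principal angles are all strictly less than $\pi/2$, this forces $(\mathbb{U},\mathbb{V})\in E_0$, i.e. $l=0$; this is the only place where the defining property of $L_0$ is used.

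Next I would apply Proposition~\ref{prop:simpify delta_H} to $X,Y$ with $l=0$. Because $r-l=r$ and $s-r+l=s-r$, the set $\mathcal{Y}_1$ becomes the orbit of $M_Y(\mathbf{u},\mathbf{v})$ under conjugation by $\operatorname{diag}(I_r,T)$, $T\in U(s-r)$, and \eqref{prop:simpify delta_H:eq} reads
\[
\delta_H(\mathcal{Z}_{X,Y})=\max_{T\in U(s-r)}\delta\!\left(M_X(\mathbf{u},\mathbf{v}),\begin{bmatrix} I_r & 0 \\ 0 & T \end{bmatrix} M_Y(\mathbf{u},\mathbf{v}) \begin{bmatrix} I_r & 0 \\ 0 & T^\ast \end{bmatrix}\right).
\]
The additional hypothesis imposed on $\delta$ states precisely that the argument of this maximum is independent of $T\in U(s-r)$, so every $T$ attains the maximum and $\delta_H(\mathcal{Z}_{X,Y})=\delta(M_X(\mathbf{u},\mathbf{v}),M_Y(\mathbf{u},\mathbf{v}))$.

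Finally I would substitute this identity into the defining formula \eqref{eq:GD} of Definition~\ref{def:GD}, leaving the $d$-term untouched, which yields the claimed closed form for any choice of principal vectors $(\mathbf{u},\mathbf{v})$ (the value is independent of that choice since it equals the choice-independent quantity $\delta_H$). There is essentially no analytical obstacle here, as all of the work has already been carried out in Proposition~\ref{prop:simpify delta_H}; the only points requiring care are the two bookkeeping observations above, namely that $L_0$ is exactly the $l=0$ locus, so that the block size $s-r+l$ reduces to $s-r$ and matches the invariance hypothesis, and that this supplementary invariance is tailored to remove precisely the remaining optimization over $U(s-r)$.
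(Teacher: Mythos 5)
Your proposal is correct and follows exactly the route the paper intends: identify $L_0$ with the $l=0$ stratum, specialize Proposition~\ref{prop:simpify delta_H} so that $\mathcal{Y}_1$ becomes the $U(s-r)$-orbit of $M_Y(\mathbf{u},\mathbf{v})$, and use the supplementary invariance hypothesis to collapse the maximum to a single evaluation before substituting into \eqref{eq:GD}. The paper gives no separate proof of this corollary precisely because it is this immediate consequence, so there is nothing to add.
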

Clearly, $\operatorname{GD}_{d,\delta}$ is determined by $d$ and $\delta$. It is also true that one can recover $d$ and $\delta$ from $\operatorname{GD}_{d,\delta}$, under some mild conditions.
\begin{proposition}\label{prop:restriction}
Let $d,\delta$ and $\operatorname{GD}_{d,\delta}$ be as in Definition~\ref{def:GD}. We assume $r \le s$ and $d(\mathbb{X},\mathbb{Y}) = 0 = \delta (X,Y)$ whenever $\mathbb{U} \subseteq \mathbb{V}$ and $X$ is the upper left $r\times r$ submatrix of $Y$.
\begin{enumerate}[(a)]
\item\label{prop:restriction:item1} For $\mathbb{U}\in \Gr(r,n)$ (resp. $\mathbb{V}\in \Gr(s,m)$), we denote by $A_1$ (resp. $B_1$) the matrix in $\Herm^+_{n,r}$ (resp. $\Herm^+_{m,s}$) such that $A_1|_{\mathbb{U}} = \operatorname{Id}_{\mathbb{U}}$ (resp. $B_1|_{\mathbb{V}} = \operatorname{Id}_{\mathbb{V}}$) and $A_1|_{\mathbb{U}^\perp} = 0$ (resp. $B_1|_{\mathbb{V}^\perp} = 0$). Then we have 
\[
d(\mathbb{U}, \iota_{m,n} (\mathbb{V}) ) = \operatorname{GD}_{d,\delta}(A_1,B_1).
\]
\item\label{prop:restriction:item2} For $X \in \Herm_r^{++}$ (resp.$Y\in \Herm_s^{++}$ ), we define $A_2 = \begin{bmatrix}
X & 0 \\
0 & 0
\end{bmatrix} \in  \Herm_{n,r}^{+}$ (resp.$B_2 = \begin{bmatrix}
Y & 0 \\
0 & 0
\end{bmatrix} \in  \Herm_{m,s}^{+}$ ). If moreover $\delta$ satisfies conditions in Corollary~\ref{cor:simpify delta_H}, then
\[
\delta(X,Y) = \operatorname{GD}_{d,\delta}(A_2,B_2).
\] 
\end{enumerate}
\end{proposition}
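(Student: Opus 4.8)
The plan is to prove each part by unwinding Definition~\ref{def:GD}, identifying the two summands appearing in \eqref{eq:GD}, and then using the stated vanishing hypotheses on $d$ and $\delta$ to collapse the appropriate term. In both parts the only real work is to pin down the base points $\pi_{n,r}(A_i)$, $\iota_{m,n}\circ\pi_{m,s}(B_i)$ and the fibre operators, after which the claim reduces to the consistency assumptions.

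For part~\eqref{prop:restriction:item1} I would first record the base points. Since $A_1$ is the orthogonal projection onto $\mathbb{U}$ and $B_1$ the orthogonal projection onto $\mathbb{V}$, we have $\pi_{n,r}(A_1)=\mathbb{U}$ and $\iota_{m,n}\circ\pi_{m,s}(B_1)=\iota_{m,n}(\mathbb{V})$, so the first summand of \eqref{eq:GD} is exactly $d^2(\mathbb{U},\iota_{m,n}(\mathbb{V}))$. For the second summand, the key observation is that $A_1|_{\mathbb{U}}=\operatorname{Id}_{\mathbb{U}}$ and $\iota^\ast_{m,n}(B_1)|_{\iota_{m,n}(\mathbb{V})}=\operatorname{Id}_{\iota_{m,n}(\mathbb{V})}$ are both identity operators. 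Hence for every choice of principal vectors $(\mathbf{u},\mathbf{v})$ formula \eqref{eq:matrix representation} returns $M_X(\mathbf{u},\mathbf{v})=I_r$ and $M_Y(\mathbf{u},\mathbf{v})=I_s$, so the set $\mathcal{Z}$ of \eqref{eq:ZXY} degenerates to the single point $\{(I_r,I_s)\}$. By the singleton property of $f_H$ recorded in Subsubsection~\ref{subsub:Hausdorff}, $\delta_H(\mathcal{Z})=\delta(I_r,I_s)$, and since $I_r$ is the upper-left $r\times r$ submatrix of $I_s$, the hypothesis forces $\delta(I_r,I_s)=0$. Substituting the two summands into \eqref{eq:GD} gives $\operatorname{GD}_{d,\delta}(A_1,B_1)=d(\mathbb{U},\iota_{m,n}(\mathbb{V}))$.

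For part~\eqref{prop:restriction:item2} I would set $\mathbb{U}_0=\operatorname{span}\{e_1,\dots,e_r\}$ and $\mathbb{V}_0=\operatorname{span}\{e_1,\dots,e_s\}\subseteq\mathcal{k}^n$, and check that $\pi_{n,r}(A_2)=\mathbb{U}_0$ and $\iota_{m,n}\circ\pi_{m,s}(B_2)=\mathbb{V}_0$, together with $\mathbb{U}_0\subseteq\mathbb{V}_0$ (using $r\le s$). This single inclusion does three things at once: it shows $d(\mathbb{U}_0,\mathbb{V}_0)=0$ by hypothesis, killing the first summand; it forces all principal angles to vanish, so $(A_2,B_2)\in L_0$ and Corollary~\ref{cor:simpify delta_H} applies (its hypotheses on $\delta$ being exactly those assumed here); and it allows the standard vectors $\mathbf{u}=\{e_1,\dots,e_r\}$, $\mathbf{v}=\{e_1,\dots,e_s\}$ to serve as a valid set of principal vectors. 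With this choice \eqref{eq:matrix representation} yields $M_{A_2|_{\mathbb{U}_0}}(\mathbf{u},\mathbf{v})=X$ and the analogous representation of $\iota^\ast_{m,n}(B_2)|_{\mathbb{V}_0}$ equals $Y$. Corollary~\ref{cor:simpify delta_H} then reads $\operatorname{GD}_{d,\delta}(A_2,B_2)=\bigl(0+\delta^2(X,Y)\bigr)^{1/2}=\delta(X,Y)$.

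The computations are elementary once the correct base points and principal vectors are identified; the only point needing care is the reduction of the $\delta_H$ term. In part~\eqref{prop:restriction:item1} this reduction is automatic, since $\mathcal{Z}$ is literally a single point, whereas in part~\eqref{prop:restriction:item2} it relies on first verifying $(A_2,B_2)\in L_0$ so that Corollary~\ref{cor:simpify delta_H} eliminates the optimization over $U(s-r)$. The step I would double-check most carefully is that the naive standard principal vectors genuinely realize the pair $(X,Y)$ rather than a $U(r)\times U(s-r)$-rotated version of it; the $U(r)$-invariance and the supplementary invariance hypothesis on $\delta$ guarantee that the final value is insensitive to this choice in any case, so no genuine obstacle arises.
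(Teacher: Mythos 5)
Your proposal is correct and follows essentially the same route as the paper: identify the base points, observe that in part (a) the set $\mathcal{Z}$ degenerates to the singleton $\{(I_r,I_s)\}$ so the $\delta_H$ term vanishes by hypothesis, and in part (b) reduce to Corollary~\ref{cor:simpify delta_H} with the standard principal vectors so that only $\delta(X,Y)$ survives. Your extra care about whether the standard vectors realize $(X,Y)$ up to a $U(r)\times U(s-r)$ rotation is a reasonable refinement, but the invariance hypotheses dispose of it exactly as you say.
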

\begin{proof}
We first prove \eqref{prop:restriction:item1}. By construction, it is clear that 
\[
\pi_{n,r} (A_1) = \mathbb{U},\quad \pi_{m,s} (B_1)  = \mathbb{V}.
\]
Moreover, since ${A_1}|_{\mathbb{U}} = \operatorname{Id}_\mathbb{U}$ and ${B_1}|_{\mathbb{V}} = \operatorname{Id}_\mathbb{V}$, we obtain that\[
\mathcal{Z}_{{A_1}_{\mid \pi_{n,r}(A)}, {\iota_{m,n}^\ast(B_1)}_{\mid \iota_{m,n} \circ \pi_{m,s}(B)}} = \{ (I_r, I_s) \}
\]
and $d(\mathbb{U}, \iota_{m,n} (\mathbb{V}) ) = \operatorname{GD}_{d,\delta}(A_1,B_1)$ follows immediately. 

To prove \eqref{prop:restriction:item2}, we simply notice that 
\[
\pi_{n,r}(A_2) = \mathbb{R}^r, \quad \pi_{m,s}(B_2) = \mathbb{R}^s.
\]
We also have 
\[
\mathcal{Z}_{{A_2}|_{\mathbb{R}^r}, {\iota_{m,n}^\ast(B_2)}|_{ \iota_{m,n}(\mathbb{R}^s) }} = \left\lbrace \left( \begin{bmatrix}
P^\ast & 0 \\
0 & Q^\ast
\end{bmatrix} X \begin{bmatrix}
P & 0 \\
0 & Q
\end{bmatrix}, P^\ast Y P\right): P^\ast P = I_s, Q^\ast Q = I_{r-s} \right\rbrace.
\]
Therefore, Corollary~\ref{cor:simpify delta_H} implies that 
\[
\operatorname{GD}_{d,\delta} (A_2,B_2) =\left( d^2(\mathbb{R}^r, \iota_{m,n} (\mathbb{R}^s)) + \delta^2(X, Y) \right)^{\frac{1}{2}} = \delta(X,Y).
\]
\end{proof}
Conditions on $d$ and $\delta$ in Proposition~\ref{prop:restriction} are satisfied by many common choices of $d$ and $\delta$. In fact, Proposition~\ref{prop:restriction} applies to all geometric distances appeared in Tables~\ref{tab:explicit formulae 1}--\ref{tab:explicit formulae 4}. Moreover, in the language of fiber bundle, $A_1$ in Proposition~\ref{prop:restriction}--\eqref{prop:restriction:item1} corresponds to the section $s_0:\Herm_{n,r}^{+} \to \Gr(r,n)$ of the fiber bundle $\pi_{n,r}: \Herm_{n,r}^{+} \to \Gr(r,n)$ defined by $s_0(\mathbb{U}) = \operatorname{Id}_{\mathbb{U}}$, and ditto for $B_1$. On the other side, $A_2$ in Proposition~\ref{prop:restriction}--\eqref{prop:restriction:item2} is simply the element in the fiber $\pi_{n,r}^{-1} (\mathbb{R}^r) \simeq \Herm^{++}_r$ that corresponds to $X$, and ditto for $B_2$.

\section{Differential geometry of \texorpdfstring{$\operatorname{GD}_{d,\delta}$}{}}\label{sec:geometry}
In this section, we discuss $\operatorname{GD}_{d,\delta}$ from the perspective of differential geometry. We concentrate on the case where $r = s$ and $m = n$ so that $\operatorname{GD}_{d,\delta}$ defines a function on $\Herm_{n,r}^+ \times \Herm_{n,r}^+$. We want to relate $\operatorname{GD}_{d,\delta}$ with the geometry of the fiber bundle $\pi_{n,r}: \Herm_{n,r}^+ \to \Gr(r,n)$. 
\subsection{Interpretation of \texorpdfstring{$\operatorname{GD}_{d,\delta}$}{} by parallel transport}\label{subsec:parallel transport}
Given $A\in \Herm_{n,r}^+$, we may decompose $A$ as $A = Q \begin{bmatrix}
M_A & 0 \\
0 & 0
\end{bmatrix} Q^\ast$ from some $Q\in U(n)$ and  $M_A\in \Herm^+_r$. We define 
\[
V_A \coloneqq  \left\lbrace Q \begin{bmatrix}
M & 0 \\
0 & 0
\end{bmatrix}Q^\ast : M \in \Herm_r 
\right\rbrace , \quad
H_A \coloneqq  \left\lbrace Q \begin{bmatrix}
0 & B \\
B^\ast & 0
\end{bmatrix}Q^\ast : B \in \mathbb{C}^{r \times (n-r)}
\right\rbrace.
\]
\begin{lemma}\label{lem:connection}
Spaces $V_A $ and $H_A$ are well-defined. Moreover, $V_A = \ker d_A \pi_{n,r}$ and $V_A \oplus H_A = T_A \Herm_{n,r}^+$.
\end{lemma}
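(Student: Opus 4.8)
The plan is to verify the three assertions in turn: that $V_A$ and $H_A$ do not depend on the chosen decomposition of $A$, that $V_A = \ker d_A\pi_{n,r}$, and that $V_A\oplus H_A = T_A\Herm_{n,r}^+$.

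First I would settle well-definedness. Suppose $A = Q\begin{bmatrix}M_A & 0\\ 0 & 0\end{bmatrix}Q^\ast = Q'\begin{bmatrix}M_A'&0\\0&0\end{bmatrix}{Q'}^\ast$. Partitioning $Q=[Q_1\mid Q_2]$ and $Q'=[Q_1'\mid Q_2']$ with $Q_1,Q_1'\in\mathcal{k}^{n\times r}$, the columns of $Q_1,Q_1'$ are two orthonormal bases of $\ker(A)^\perp$ and those of $Q_2,Q_2'$ two orthonormal bases of $\ker(A)$, whence $Q' = Q\begin{bmatrix}U&0\\0&W\end{bmatrix}$ for some $U\in U(r)$, $W\in U(n-r)$. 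Substituting this and using that $M\mapsto UMU^\ast$ is a bijection of $\Herm_r$ while $B\mapsto UBW^\ast$ is a bijection of $\mathcal{k}^{r\times(n-r)}$ shows that the two descriptions of $V_A$, resp. of $H_A$, coincide, so both spaces are well-defined.

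Next I would identify the tangent space. Since conjugation by $Q$ is a linear diffeomorphism of $\Herm_{n,r}^+$ carrying the base point $\begin{bmatrix}M_A&0\\0&0\end{bmatrix}$ to $A$ and the associated spaces to $V_A,H_A$, it suffices to treat $A=\begin{bmatrix}M_A&0\\0&0\end{bmatrix}$. I would use the map $\phi(Y)=YY^\ast$ on full-rank $Y\in\mathcal{k}^{n\times r}$, a surjective submersion onto $\Herm_{n,r}^+$ whose fibres are exactly the free $U(r)$-orbits $Y\mapsto YU$. Taking $Y_0=\begin{bmatrix}M_A^{1/2}\\0\end{bmatrix}$ and $\dot Y=\begin{bmatrix}C\\D\end{bmatrix}$, one computes $d\phi_{Y_0}(\dot Y)=\dot Y Y_0^\ast+Y_0\dot Y^\ast=\begin{bmatrix}CM_A^{1/2}+M_A^{1/2}C^\ast & M_A^{1/2}D^\ast\\ DM_A^{1/2}&0\end{bmatrix}$. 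As $M_A$ is invertible, the upper-left block exhausts $\Herm_r$ (take $C=\tfrac12 HM_A^{-1/2}$) and the upper-right block exhausts $\mathcal{k}^{r\times(n-r)}$, independently of each other. Hence the image, which by the submersion property equals $T_A\Herm_{n,r}^+$, is $\bigl\{\begin{bmatrix}H&E\\E^\ast&0\end{bmatrix}:H\in\Herm_r,\,E\in\mathcal{k}^{r\times(n-r)}\bigr\}$. This is precisely $V_A+H_A$ in the frame $Q=I$, and the sum is direct since the summands share only the zero matrix; conjugating back by $Q$ gives $T_A\Herm_{n,r}^+=V_A\oplus H_A$.

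It remains to show $V_A=\ker d_A\pi_{n,r}$. The curve $t\mapsto Q\begin{bmatrix}M_A+tH&0\\0&0\end{bmatrix}Q^\ast$ lies in $\Herm_{n,r}^+$ for small $t$ and has constant image $Q\cdot\operatorname{span}\{e_1,\dots,e_r\}$, so $\pi_{n,r}$ is constant along it and its velocity $Q\begin{bmatrix}H&0\\0&0\end{bmatrix}Q^\ast$ lies in $\ker d_A\pi_{n,r}$; thus $V_A\subseteq\ker d_A\pi_{n,r}$. By Lemma~\ref{lem:bundle}, $\pi_{n,r}$ is a fibre bundle with fibre $\Herm_r^{++}$, hence a submersion whose kernel has dimension $\dim\Herm_r^{++}=\dim\Herm_r=\dim V_A$, and the inclusion together with this equality of dimensions forces $V_A=\ker d_A\pi_{n,r}$. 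The step I expect to demand the most care is the tangent space identification: one must justify that $\phi$ is a submersion so that $\operatorname{im}d\phi_{Y_0}$ is the full tangent space rather than a proper subspace, and check that the two blocks are filled completely and independently. The unitary reduction in the well-definedness argument and the dimension count against Lemma~\ref{lem:bundle} are then routine.
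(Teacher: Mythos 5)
Your proof is correct. The well-definedness argument (reducing two decompositions of $A$ to conjugation by a block-diagonal unitary $\operatorname{diag}(U,W)$ and noting that $M\mapsto UMU^\ast$ and $B\mapsto UBW^\ast$ are bijections) and the identification $V_A=\ker d_A\pi_{n,r}$ (obvious inclusion plus a dimension count against the fibre dimension from Lemma~\ref{lem:bundle}) follow essentially the same route as the paper. Where you genuinely diverge is the direct-sum statement: the paper merely observes that $H_A\subseteq T_A\Herm_{n,r}^+$ and $V_A\cap H_A=\{0\}$ ``by definition'' and concludes from $r^2+2r(n-r)=2rn-r^2$, whereas you compute $T_A\Herm_{n,r}^+$ explicitly as the image of $d\phi_{Y_0}$ for the parametrization $\phi(Y)=YY^\ast$, obtaining the block description $\bigl\{\begin{bmatrix}H&E\\E^\ast&0\end{bmatrix}\bigr\}$ directly. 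Your route is longer but supplies precisely the justification the paper elides: it is not completely immediate that the off-diagonal perturbations in $H_A$ are tangent to $\Herm_{n,r}^+$, and your differential computation proves it. One small point of care in your write-up: rather than invoking ``the submersion property'' of $\phi$ as a prior fact (which risks looking circular, since one usually establishes it by exactly this computation), it is cleaner to note that $\operatorname{im}d\phi_{Y_0}\subseteq T_A\Herm_{n,r}^+$ automatically because $\phi$ maps smoothly into the manifold $\Herm_{n,r}^+$, and that the image you computed already has real dimension $r^2+2r(n-r)=\dim_{\mathbb{R}}\Herm_{n,r}^+$, so equality follows from the dimension count alone.
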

\begin{proof}
Let $A = Q' \begin{bmatrix}
M'_A & 0 \\
0 & 0
\end{bmatrix} {Q'}^\ast$ be another decomposition of $A$. Then we have 
\[
X \begin{bmatrix}
M'_A & 0 \\
0 & 0
\end{bmatrix} X^\ast =  \begin{bmatrix}
M_A & 0 \\
0 & 0
\end{bmatrix},
\]
where $X = {Q'}^\ast Q'$. Partitioning $X$ as $X = \begin{bmatrix}
X_1 & X_2 \\
X_3 & X_4
\end{bmatrix}$ where $X_1\in \mathbb{C}^{r \times r}$ and $X_4 \in \mathbb{C}^{(n-r) \times (n-r)}$, we obtain 
\[
\ \begin{bmatrix}
X_1 M'_A X_1^\ast & X_1 M'_A X_3^\ast \\
X_3 M'_A X_1^\ast & X_3 M'_A X_3^\ast   
\end{bmatrix} = \begin{bmatrix}
M_A & 0 \\
0 & 0
\end{bmatrix}.
\]
This implies $X_3 = 0$ and $X_1 M'_A X_1^\ast = M_A$. Moreover, we have $X_2 = 0, X_1\in U(r)$ and $X_4\in U(n-r)$ since $X\in U(n)$. Thus we have 
\begin{align*}
Q' \begin{bmatrix}
M & 0 \\
0 & 0 
\end{bmatrix} {Q}^\ast  &= Q \begin{bmatrix}
X_1 & 0 \\
0 & X_4 
\end{bmatrix} \begin{bmatrix}
M & 0 \\
0 & 0 
\end{bmatrix} \begin{bmatrix}
X^\ast_1 & 0 \\
0 & X^\ast_4 
\end{bmatrix} Q^\ast =  Q \begin{bmatrix}
X_1 M X_1^\ast & 0 \\
0 & 0 
\end{bmatrix} Q^\ast, \\
Q' \begin{bmatrix}
0 & B \\
B^\ast & 0 
\end{bmatrix} {Q'}^\ast &= Q \begin{bmatrix}
X_1 & 0 \\
0 & X_4 
\end{bmatrix} \begin{bmatrix}
0 & B \\
B^\ast & 0 
\end{bmatrix} \begin{bmatrix}
X^\ast_1 & 0 \\
0 & X^\ast_4 
\end{bmatrix} Q^\ast = Q \begin{bmatrix}
0 & X_1 B X_4^\ast \\
X_4 B X_1^\ast & 0
\end{bmatrix} Q^\ast,
\end{align*}
from which we may conclude that $V_A$ and $H_A$ are independent on the decomposition of $A$. 

It is clear that $V_A \subseteq \ker d_A \pi_{n,r}$. By comparing dimensions, we see immediately that $V_A = \ker d_A \pi_{n,r}$. We observe that $H_A \subseteq T_A \Herm^+_{n,r}$ and $V_A \cap H_A = \{0\}$ by definition. Therefore, we must have $V_A \oplus H_A = T_A \Herm^+_{n,r}$ since $\dim_{\mathbb{R}} V_A = r^2, \dim_{\mathbb{R}} H_A = 2 r(n-r)$ and $\dim_{\mathbb{R}} \Herm^+_{n,r} = 2rn - r^2$.
\end{proof}
A direct consequence of Lemma~\ref{lem:connection} is that the assigment $A \mapsto H_A$ defines an Ehresmann connection \cite{Ehresmann95,KN96,Michor08} on $\pi_{n,r}: \Herm^+_{n,r} \to \Gr(r,n)$. With respect to this connection, the \emph{parallel transport} of $A\in \Herm^+_{n,r}$ along a curve $\gamma: [0,1] \to \Gr(r,n)$ is defined to be a curve $P^A_{\gamma}: [0,1] \to \Herm^+_{n,r}$ such that 
\[
P^A_{\gamma}(0) = A,\quad \pi_{n,r} (P^A_{\gamma}(t) ) = \gamma(t),\quad  \frac{d P^A_{\gamma}(t)}{dt} \in H_{P^A_{\gamma}(t)}.
\]

Although on a general fiber bundle, the parallel transport may only exist on a small portion \cite{KN96} of the base curve, it does exist on the whole of $\gamma$ for $\pi_{n,r}: \Herm^+_{n,r} \to \Gr(r,n)$. In fact, we are even able to construct $P_{\gamma}^A$ explicitly. To this end, we let $c:[0,1] \to V(n,r)$ be the horizontal lift of $\gamma$ in $V(r,n)$ at $Q_0 \begin{bmatrix}
I_r &
0
\end{bmatrix}^\tp$, where first $r$ column vectors of $Q_0\in U(n)$ consist an orthonormal basis of $\gamma(0)$. We recall from \cite{EAS98} that there exists some smooth function $B:[0,1] \to \mathbb{C}^{r \times (n-r)}$ such that $B(0) = 0$ and 
\begin{equation}\label{eq:horizontal lift}
c(t) = Q_0 \exp \left(\begin{bmatrix}
0 & - B(t)\\
B(t)^\ast  & 0
\end{bmatrix}  \right) \begin{bmatrix}
I_r \\
0
\end{bmatrix},\quad t\in [0,1].
\end{equation}
\begin{lemma}\label{lem:paralell transport}
If $\gamma:[0,1] \to \Gr(r,n)$ is a smooth curve and $A\in \Herm_{n,r}^+$ satisfies $\pi_{n,r}(A) = \gamma(0)$, then the parallel transport of $A$ along $\gamma$ is 
\begin{equation}\label{lem:paralell transport:eq}
P_{\gamma}^A (t) = c(t) M_{A\mid_{\gamma(0)}} c(t)^\ast, \quad t\in [0,1].
\end{equation}
Here $c(t)$ is the horizontal lift of $\gamma(t)$ as in \eqref{eq:horizontal lift} and $M_{A\mid_{\gamma(0)}} \in \Herm_r^+$ is the matrix representation of $A\mid_{\gamma(0)}$ with respect to the orthonormal basis of $\gamma(0)$ consisting of column vectors of $c(0) = Q_0 \begin{bmatrix}
I_r &
0
\end{bmatrix}^\tp$.
\end{lemma}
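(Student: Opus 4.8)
The plan is to verify directly that the explicit curve $P_\gamma^A(t) = c(t)\,M_{A\mid_{\gamma(0)}}\,c(t)^\ast$ satisfies the three defining conditions of the parallel transport, and then to invoke uniqueness of the horizontal-lift ODE to conclude both that this curve \emph{is} the parallel transport and that it is defined on all of $[0,1]$. Throughout I would write $M \coloneqq M_{A\mid_{\gamma(0)}} \in \Herm_r^{++}$ and record the three properties of the Stiefel horizontal lift $c$ that are needed: its columns form an orthonormal basis of $\gamma(t)$ (so $c(t)^\ast c(t) = I_r$ and $\Pi_t \coloneqq c(t)c(t)^\ast$ is the orthogonal projection onto $\gamma(t)$); it is horizontal in $V(r,n)$, i.e.\ $c(t)^\ast \dot c(t) = 0$; and $c(0) = Q_0\begin{bmatrix} I_r & 0\end{bmatrix}^\tp$.

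First I would dispatch the initial and projection conditions, which are routine. Setting $t=0$ and using $c(0) = Q_0\begin{bmatrix} I_r & 0\end{bmatrix}^\tp$ gives $P_\gamma^A(0) = Q_0\begin{bmatrix} M & 0 \\ 0 & 0\end{bmatrix}Q_0^\ast = A$, the last equality being precisely the meaning of $M$ as the matrix representation of $A\mid_{\gamma(0)}$ in the basis $c(0)$. For the projection, since $M$ is invertible and $c(t)$ has orthonormal columns spanning $\gamma(t)$, the matrix $P_\gamma^A(t)$ is self-adjoint, positive semidefinite, of rank $r$, with range equal to $\gamma(t)$; hence $\pi_{n,r}(P_\gamma^A(t)) = \ker(P_\gamma^A(t))^\perp = \gamma(t)$.

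The main step is the horizontality condition $\dot P_\gamma^A(t) \in H_{P_\gamma^A(t)}$. Here I would first translate the block-form definition of $H_{\bullet}$ from Lemma~\ref{lem:connection} into an intrinsic description: extending $c(t)$ to a unitary and using that $H_{P_\gamma^A(t)}$ is independent of this extension, a self-adjoint $S \in T_{P_\gamma^A(t)}\Herm_{n,r}^+$ lies in $H_{P_\gamma^A(t)}$ exactly when its two diagonal blocks relative to the splitting $\gamma(t)\oplus\gamma(t)^\perp$ vanish, i.e.\ $\Pi_t S\Pi_t = 0$ and $(I-\Pi_t)S(I-\Pi_t) = 0$. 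Differentiating gives $\dot P_\gamma^A = \dot c\,M\,c^\ast + c\,M\,\dot c^\ast$, which is automatically self-adjoint. The cancellation I expect is that horizontality $c^\ast\dot c = 0$ — equivalently the four identities $\Pi_t\dot c = 0$, $\dot c^\ast\Pi_t = 0$, $(I-\Pi_t)c = 0$, $c^\ast(I-\Pi_t) = 0$ — kills both diagonal blocks: conjugating $\dot P_\gamma^A$ by $\Pi_t$ leaves the factors $\Pi_t\dot c$ and $\dot c^\ast\Pi_t$, while conjugating by $I-\Pi_t$ leaves the factors $(I-\Pi_t)c$ and $c^\ast(I-\Pi_t)$, and all of these vanish.

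The step I expect to be the main obstacle is this horizontality computation, and specifically the correct identification of $H_{P_\gamma^A(t)}$ with the ``off-diagonal'' self-adjoint operators relative to $\gamma(t)\oplus\gamma(t)^\perp$, together with the recognition that the Stiefel-level horizontality $c^\ast\dot c = 0$ is exactly what forces the diagonal blocks of $\dot P_\gamma^A$ to vanish. Once all three conditions are verified, I would finish by noting that the parallel transport is the unique solution of a first-order (hence locally Lipschitz) horizontal-lift ODE, so the curve exhibited above must coincide with it; since the formula is defined for every $t\in[0,1]$, this simultaneously establishes that the parallel transport exists on the whole of $\gamma$.
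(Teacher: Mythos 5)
Your proposal is correct, and it follows the same overall strategy as the paper: exhibit the curve and check the three defining conditions (initial value, projection to $\gamma(t)$, horizontality of the velocity). The initial-value and projection steps are handled the same way in both arguments. Where you differ is in the horizontality step. The paper works with the explicit exponential parametrization \eqref{eq:horizontal lift}, writes $P_\gamma^A(t)=Q(t)\begin{bmatrix} M & 0\\ 0&0\end{bmatrix}Q(t)^\ast$ with $Q(t)=Q_0\exp\bigl(\begin{smallmatrix}0 & -B(t)\\ B(t)^\ast & 0\end{smallmatrix}\bigr)$, and differentiates to land directly in the block form defining $H_{P_\gamma^A(t)}$; this computation implicitly uses $\dot Q=Q\bigl(\begin{smallmatrix}0 & -B'\\ {B'}^\ast & 0\end{smallmatrix}\bigr)$, i.e.\ that the exponent commutes with its derivative (automatic for geodesics, where $B(t)=tB_0$). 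You instead recast $H_{P_\gamma^A(t)}$ intrinsically as the self-adjoint tangent vectors whose two diagonal blocks relative to $\gamma(t)\oplus\gamma(t)^\perp$ vanish, and derive $\Pi_t\dot P\Pi_t=0$ and $(I-\Pi_t)\dot P(I-\Pi_t)=0$ from the Stiefel horizontality identity $c^\ast\dot c=0$ alone. This buys you a computation that never differentiates the matrix exponential and applies verbatim to any horizontal lift, at the cost of first justifying the intrinsic description of $H_\bullet$ (which does follow from Lemma~\ref{lem:connection} by a dimension count, as you indicate). Your closing appeal to uniqueness of the horizontal-lift ODE is not needed to prove the lemma as stated, but it is consistent with the remark the paper makes after the proof.
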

\begin{proof}
By definition, we clearly have $P_{\gamma}^A (0) = A$. For each $t\in [0,1]$, we notice that $\ker (P_{\gamma}^A (t) ) = \operatorname{span}\{v_1,\dots, v_{n-r}\}$ where $c(t)^\ast v_j = 0$ for each $j=1,\dots, n-r$, thus $\pi_{n,r} (P_{\gamma}^A (t)) = \gamma(t)$. We also observe that 
\[
( P^A_\gamma )'(t) = Q(t) \begin{bmatrix}
0 & M_A B'(t) \\
{B'(t)}^{\ast} M_A & 0
\end{bmatrix} Q(t)^\ast \in H_{P^A_\gamma(t)},
\] 
where $Q(t) = Q_0 \exp \left(\begin{bmatrix}
0 & - B(t)^\ast \\
B(t) & 0
\end{bmatrix} \right)$. Therefore, $P^A_{\gamma}$ is the parallel transport of $A$ along $\gamma$.
\end{proof}
We remark that $Q_0\in U(n)$ in \eqref{eq:horizontal lift} is not uniquely determined. Thus the curve $c$ varies accordingly if we change $Q_0$. However, a direct calculation implies that $P_{\gamma}^A$ is well-defined for different choices of $Q_0$ since $M_{A\mid_{\gamma(0)}}$ in \eqref{lem:paralell transport:eq} depends on $Q_0$ as well. One can also apply the uniqueness (if it exists) of the parallel transport \cite[Theorem~17.8]{Michor08} for general fiber bundle to see that $P_{\gamma}^A$ does not depend on choices of $Q_0$.

Let $d$ and $\delta$ be geodesic distance functions (cf. Example~\ref{ex:GD}) on $\Gr(r,n)$ and $\Herm^{++}_r$ with respect to their canonical metrics \cite{EAS98,Bhatia15}, respectively. Combining \eqref{eq:GD}, \eqref{prop:simpify delta_H:eq} and Lemma~\ref{lem:paralell transport}, we obtain the following description of $\operatorname{GD}_{d,\delta}$ by parallel transports.
\begin{proposition}\label{prop:paralell transport}
For any $A, B\in \Herm^+_{n,r}$, $\operatorname{GD}_{d,\delta}(A,B)^2$ is the total energy cost of first parallelly transporting $A$  into $\pi_{n,r}^{-1} (\pi_{n,r}(B))$ along a geodesic, then moving the corresponding parallel transport to $B$ inside $\pi_{n,r}^{-1} (\pi_{n,r}(B))$. In other words, we have 
\[
\operatorname{GD}^2_{d,\delta}(A,B) = d^2(\pi_{n,r}(A),\pi_{n,r}(B)) + \delta^2 (\mathcal{A}_1, B\mid_{\pi_{n,r}(B)}),
\]
where we define
\[
\mathcal{A}_1 \coloneqq \left\lbrace 
P^A_{\gamma}(1)\mid_{\pi_{n,r}(B)}: \gamma~\text{is a geodesic curve connecting $\pi_{n,r}(A)$ and $\pi_{n,r}(B)$}
\right\rbrace
\]
and $\delta (\mathcal{A}_1, B_1)$ is the point-set distance between $\mathcal{A}_1$ and $B\mid_{\pi_{n,r}(B)}$ with respect to any orthonormal basis of $\pi_{n,r}(B)$.
\end{proposition}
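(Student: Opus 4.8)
The plan is to isolate the fiber part of \eqref{eq:GD}. Since the base term $d^2(\pi_{n,r}(A),\pi_{n,r}(B))$ occurs unchanged on both sides, with $\mathbb{A}\coloneqq\pi_{n,r}(A)$, $\mathbb{B}\coloneqq\pi_{n,r}(B)$, $X\coloneqq A|_{\mathbb{A}}$ and $Y\coloneqq B|_{\mathbb{B}}$ the whole proposition reduces to the single identity
\[
\delta_H(\mathcal{Z}_{X,Y})=\delta(\mathcal{A}_1,Y),
\]
where the right-hand side is the point-set distance, understood as the value $\delta_H(\mathcal{A}_1\times\{Y\})=\sup_{Z\in\mathcal{A}_1}\delta(Z,Y)$ of the generalized Hausdorff function of Subsubsection~\ref{subsub:Hausdorff}. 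I would establish this by exhibiting, inside one fixed orthonormal frame of $\mathbb{B}$, an explicit correspondence between $\mathcal{A}_1$ and the family of matrix representations that define $\mathcal{Z}_{X,Y}$.

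First I would match minimal geodesics connecting $\mathbb{A}$ and $\mathbb{B}$ with sets of principal vectors. Using the Edelman--Arias--Smith form $U(t)=U\cos(t\Theta)+U_\perp\sin(t\Theta)$ of a Grassmann geodesic (with $\Theta=\diag(\theta_1,\dots,\theta_r)$ the principal angles), such a geodesic lifts to the horizontal curve $c(t)$ of \eqref{eq:horizontal lift} whose initial frame $c(0)$ may be taken to be a set $\mathbf{u}=\{u_1,\dots,u_r\}$ of principal vectors of $\mathbb{A}$; the defining relation $v_i=\cos\theta_i\,u_i+\sin\theta_i\,w_i$, with $w_i\perp\mathbb{A}$ a unit vector, then forces the terminal frame $c(1)$ to be the matching set $\mathbf{v}=\{v_1,\dots,v_r\}$ of principal vectors of $\mathbb{B}$. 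Conversely every principal frame pair arises this way, and by Lemma~\ref{lem:Z_XY} the remaining freedom is a block unitary; in the stratum $(\mathbb{A},\mathbb{B})\in E_l$ of \eqref{eq:E_l} this freedom is precisely the group $U(l)$ indexing the distinct minimal geodesics.

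Next I would compute the transported operators. By Lemma~\ref{lem:paralell transport}, $P^A_\gamma(1)=c(1)M_X(\mathbf{u},\mathbf{v})c(1)^\ast$, and since $c(1)=\mathbf{v}$ is orthonormal the matrix of $P^A_\gamma(1)|_{\mathbb{B}}$ in the frame $\mathbf{v}$ is $c(1)^\ast P^A_\gamma(1)c(1)=M_X(\mathbf{u},\mathbf{v})$, while the matrix of $Y$ in the same frame is $M_Y(\mathbf{u},\mathbf{v})$. Representing every element of $\mathcal{A}_1$ in one fixed frame $\mathbf{v}_0$ of $\mathbb{B}$ and pushing all the frames $\mathbf{v}$ back to $\mathbf{v}_0$ by the block unitaries of Lemma~\ref{lem:Z_XY}, the pair $(P^A_\gamma(1)|_{\mathbb{B}},Y)$ runs exactly through the conjugate pairs that constitute $\mathcal{Z}_{X,Y}$. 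Invoking the $U(r)$-invariance of the geodesic distance $\delta$ together with Proposition~\ref{prop:simpify delta_H}, both $\delta_H(\mathcal{Z}_{X,Y})$ and $\sup_{Z\in\mathcal{A}_1}\delta(Z,Y)$ collapse to $\max_{Y_1\in\mathcal{Y}_1}\delta(M_X(\mathbf{u},\mathbf{v}),Y_1)$, which yields the fiber identity and hence the proposition.

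The main obstacle is the geodesic--frame correspondence in the degenerate stratum: I must verify that the set of all minimizing geodesics from $\mathbb{A}$ to $\mathbb{B}$ realizes exactly the $U(l)$-orbit of terminal frames predicted by Lemma~\ref{lem:Z_XY}---no more and no fewer. Concretely, when several principal angles coincide or equal $0$ or $\pi/2$, one must check that each admissible rotation of principal vectors yields a genuine length-$d(\mathbb{A},\mathbb{B})$ geodesic whose horizontal lift produces the corresponding frame; the angles equal to $\pi/2$, which generate the ambiguity $U(l)$ on the $\mathbb{B}$-side, are the subtle case. A secondary but essential point is to confirm that the point-set distance is the supremum coming from the generalized Hausdorff function, so that it agrees with the maximum delivered by Proposition~\ref{prop:simpify delta_H} rather than with an infimum.
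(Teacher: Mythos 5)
Your proposal is correct and follows essentially the same route as the paper, which offers no detailed argument beyond the remark that the proposition follows by ``combining \eqref{eq:GD}, \eqref{prop:simpify delta_H:eq} and Lemma~\ref{lem:paralell transport}''; your write-up is a faithful expansion of exactly that combination (reduce to the fiber term, match minimizing geodesics with principal frames via Lemma~\ref{lem:Z_XY}, compute the transported operator via Lemma~\ref{lem:paralell transport}, and collapse both sides with Proposition~\ref{prop:simpify delta_H}). The two subtleties you flag---that the $\pi/2$-angle ambiguity in minimizing geodesics realizes exactly the $U(l)$-orbit of terminal frames, and that the ``point-set distance'' $\delta(\mathcal{A}_1,B\mid_{\pi_{n,r}(B)})$ must be read as $\delta_H(\mathcal{A}_1\times\{B\mid_{\pi_{n,r}(B)}\})$, i.e.\ the supremum, to match \eqref{prop:simpify delta_H:eq}---are genuine and are resolved the way the paper implicitly intends.
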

\subsection{Interpretation of \texorpdfstring{$\operatorname{GD}_{d,\delta}$}{} by quasi-geodesic}\label{subsec:quasi-geodesic}
It is observed in \cite{BS10} that $\Herm^+_{n,r}$ is a quotient space $\Herm^+_{n,r} \simeq \left( V(n,r) \times \Herm^{++}_{r} \right) / U(r)$, from which a family of Riemannian metrics $\{g_k: k > 0 \}$ on $\Herm^{+}_{n,r}$ are induced from those on $V(n,r) \times \Herm^{++}_{r}$. Although geodesic curves with respect to these Riemannian metrics can not be computed explicitly, a quasi-geodesic curve is proposed in \cite{BS10}. For $A,B\in \Herm^+_{n,r}$, such a curve connecting $A$ and $B$ can be written as 
\begin{equation}\label{eq:quasi-geodesic}
\mu(t) \coloneqq u(t) S(t) u(t)^\ast, \quad t\in [0,1],
\end{equation}
where $u(t)$ is the horizontal lift in $V(n,r)$ of a geodesic in $\Gr(r,n)$ connecting $\pi_{n,r}(A)$ and $\pi_{n,r}(B)$, $S(t)$ is a geodesic in $\Herm^{++}_{r}$ connecting the matrix representations of $A$ and $B$ with respect to column vectors of $u(0)$ and $u(1)$. The length of $\mu$ with respect to $g_k$ is 
\[
\operatorname{length}_k(\mu) = \left( d^2(\pi_{n,r}(A),\pi_{n,r}(B)) + k \delta^2 (S(0),S(1)) \right)^{\frac{1}{2}}.
\]
Here $d$ and $\delta$ are the same distance functions as in Proposition~\ref{prop:paralell transport}. According to Corollary~\ref{cor:simpify delta_H}, we obtain the following description of $\operatorname{GD}_{d,\delta}$. 
\begin{corollary}\label{cor:quasi geodesic distance}
Let $A,B, \mu, d$ and $\delta$ be as above. Then we have $\operatorname{GD}_{d,\delta} (A,B) = \operatorname{length}_1 (\mu)$ if principal angles between $\pi_{n,r}(A)$ and $\pi_{n,r}(B)$ are strictly smaller than $\pi/2$. 
\end{corollary}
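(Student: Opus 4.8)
The plan is to reduce the identity to Corollary~\ref{cor:simpify delta_H} and then to match the two endpoint matrices $S(0),S(1)$ of the quasi-geodesic \eqref{eq:quasi-geodesic} with the principal-vector matrix representations $M_{A\mid_{\pi_{n,r}(A)}}(\mathbf{u},\mathbf{v})$ and $M_{B\mid_{\pi_{n,r}(B)}}(\mathbf{u},\mathbf{v})$. First I would check that $\delta$ satisfies the hypotheses of Corollary~\ref{cor:simpify delta_H}. Since here $r=s$, the auxiliary condition involving $U(s-r)$ is vacuous, so only $U(r)$-invariance is needed; and for the affine-invariant geodesic distance $\delta(C,D)=\left(\sum_i \log^2\lambda_i\right)^{1/2}$ of Example~\ref{ex:GD}, with $\lambda_i$ the eigenvalues of $C^{-1/2}DC^{-1/2}$, invariance is immediate because for $P\in U(r)$ one has $(PCP^\ast)^{-1/2}(PDP^\ast)(PCP^\ast)^{-1/2}=P\left(C^{-1/2}DC^{-1/2}\right)P^\ast$, which is unitarily similar to $C^{-1/2}DC^{-1/2}$ and hence has the same eigenvalues.

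Next I would note that, by the description of $L_0$ preceding Corollary~\ref{cor:simpify delta_H} (with $m=n$, so $\iota_{m,n}$ is the identity on $\Gr(r,n)$), the hypothesis that all principal angles between $\pi_{n,r}(A)$ and $\pi_{n,r}(B)$ are strictly smaller than $\pi/2$ is precisely the statement $(A,B)\in L_0$; equivalently, the number of principal angles equal to $\pi/2$ is $\dim\!\left(\pi_{n,r}(A)\cap\pi_{n,r}(B)^\perp\right)=l$, so the hypothesis forces $l=0$. Corollary~\ref{cor:simpify delta_H} then yields
\[
\operatorname{GD}_{d,\delta}(A,B)^2 = d^2\!\left(\pi_{n,r}(A),\pi_{n,r}(B)\right) + \delta^2\!\left( M_{A\mid_{\pi_{n,r}(A)}}(\mathbf{u},\mathbf{v}),\, M_{B\mid_{\pi_{n,r}(B)}}(\mathbf{u},\mathbf{v}) \right),
\]
for any principal vectors $(\mathbf{u},\mathbf{v})$. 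Comparing with $\operatorname{length}_1(\mu)^2 = d^2(\pi_{n,r}(A),\pi_{n,r}(B)) + \delta^2(S(0),S(1))$, it remains only to prove $\delta(S(0),S(1)) = \delta\!\left( M_{A\mid_{\pi_{n,r}(A)}}(\mathbf{u},\mathbf{v}), M_{B\mid_{\pi_{n,r}(B)}}(\mathbf{u},\mathbf{v}) \right)$.

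The crux is to choose the horizontal lift $u(t)$ so that its endpoints \emph{are} principal vectors. Let $\{a_1,\dots,a_r\}$ and $\{b_1,\dots,b_r\}$ be principal vectors with $a_i^\ast b_j=\cos(\theta_i)\delta_{ij}$, and set $u(0)=[a_1,\dots,a_r]$. Using the explicit Grassmannian geodesic formula of \cite{EAS98}, the horizontal lift is $u(t)=[a_1(t),\dots,a_r(t)]$ with $a_i(t)=\cos(\theta_i t)a_i+\sin(\theta_i t)\widetilde{a}_i$, where $\widetilde{a}_i$ is the unit vector along the component of $b_i$ orthogonal to $a_i$; since every $\theta_i<\pi/2$ this direction is well-defined, $b_i=\cos(\theta_i)a_i+\sin(\theta_i)\widetilde{a}_i$, and therefore $a_i(1)=b_i$. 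Thus $u(0)$ has columns $\mathbf{u}$ and $u(1)$ has columns $\mathbf{v}$, so by definition $S(0)=M_{A\mid_{\pi_{n,r}(A)}}(\mathbf{u},\mathbf{v})$ and $S(1)=M_{B\mid_{\pi_{n,r}(B)}}(\mathbf{u},\mathbf{v})$, giving the required equality. By the $U(r)$-invariance of $\delta$ established above, replacing the initial basis $u(0)$ by $u(0)P$ with $P\in U(r)$ conjugates both $S(0)$ and $S(1)$ and leaves $\delta(S(0),S(1))$ fixed, so $\operatorname{length}_1(\mu)$ is independent of the chosen lift and the statement is well-posed.

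The main obstacle I expect is precisely this third step: verifying rigorously, from the geodesic formula of \cite{EAS98}, that the horizontal lift carries a chosen orthonormal basis of principal vectors of $\pi_{n,r}(A)$ to the matched principal vectors of $\pi_{n,r}(B)$ at $t=1$. This is where the hypothesis $\theta_i<\pi/2$ is genuinely used, since it guarantees that each $\widetilde{a}_i$ is unambiguously defined (no vanishing orthogonal component) and that the minimizing geodesic between the two subspaces is unique, so that both the quasi-geodesic and its length are well-defined. The remaining steps are routine bookkeeping of matrix representations.
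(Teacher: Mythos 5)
Your proof is correct and follows the same route the paper intends: the paper states the corollary as an immediate consequence of Corollary~\ref{cor:simpify delta_H}, and you supply exactly the missing bookkeeping, namely that the horizontal lift of the unique minimizing geodesic carries principal vectors of $\pi_{n,r}(A)$ to the matched principal vectors of $\pi_{n,r}(B)$, so that $S(0),S(1)$ coincide (up to a common $U(r)$-congruence, harmless by invariance) with $M_{A\mid_{\pi_{n,r}(A)}}(\mathbf{u},\mathbf{v})$ and $M_{B\mid_{\pi_{n,r}(B)}}(\mathbf{u},\mathbf{v})$. Your observation that the hypothesis $\theta_i<\pi/2$ is used both for $(A,B)\in L_0$ and for uniqueness of the geodesic matches the paper's remark following the corollary.
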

We remark that if some principal angels between $\pi_{n,r}(A)$ and $\pi_{n,r}(B)$ are equal to $\pi/2$, then $\operatorname{GD}_{d,\delta} (A,B)$ is not necessarily equal to $\operatorname{length}_1 (\mu)$, since geodesics between $\pi_{n,r}(A)$ and $\pi_{n,r}(B)$ are not unique. Instead, we have 
\[
\operatorname{GD}_{d,\delta} (A,B) = \min_{\mu}\{ \operatorname{length}_1 (\mu) \},
\]
where $\mu$ runs through all possible quasi-geodesics defined in \eqref{eq:quasi-geodesic}.
\section{Distance between non-equidimensional PD matrices}\label{sec:PD}
According to Definition~\ref{def:GD}, the geometric distance is defined for $A \in \Herm_{n,r}^+$ and $B\in \Herm_{m,s}^+$ as 
\[
\operatorname{GD}_{d,\delta}(A,B) \coloneqq \left( d^2(\pi_{n,r}(A),\iota_{m,n} \circ \pi_{m,s}(B)) + \delta_H^2 (\mathcal{Z}_{A_{\mid \pi_{n,r}(A)}, B_{\mid \iota_{m,n} \circ \pi_{m,s}(B)}}) \right)^{\frac{1}{2}},
\]
where $d: \Gr(r,n) \times \Gr(s,n) \to \mathbb{R}_+$, $\delta: \Herm_r^{++} \times \Herm_s^{++} \to \mathbb{R}_+$ are given functions, $\delta_H$ is the generalized Hausdorff function defined in \eqref{eq:Hausdorff} and $\mathcal{Z}_{A_{\mid \pi_{n,r}(A)}, B_{\mid \iota_{m,n} \circ \pi_{m,s}(B)}}$ is the set defined for $A_{\mid \pi_{n,r}(A)}$ and $B_{\iota_{m,n} \circ \pi_{m,s}(B)}$ in \eqref{eq:ZXY}. Therefore properties of $d$ and $\delta$ are crucial to those of $\operatorname{GD}_{d,\delta}$. For example, $\operatorname{GD}_{d,\delta}$ can be simplified if $\delta$ has some invariant properties (cf.  Proposition~\ref{prop:simpify delta_H} and Corollary~\ref{cor:simpify delta_H}). In \cite[Theorem~12]{ye2016schubert}, a list of choices of $d$ is given, which can be easily computed by principal angles and generalize the commonly used distance functions on Grassmann manifolds. As for choices of $\delta$, an extension of the geodesic distance on $\Herm^+_r$ with respect to its canonical Riemannian metric is proposed in \cite{lim2019geometric}. Methods based on the optimal transport between Gaussian distributions are discussed in \cite{AJA22,CL22}. Except for these, we are not aware of other existing works concerning possible choices of $\delta$. 

The goal of this section is to extend commonly used distance \cite{TP21,Sra12}, or more generally, divergence \cite{Stein64,CCA15,CZA06,KSD09} functions on $\Herm_r^{++}$ to non-equidimensional cases. For computational purposes, these extensions should be easily computable, just as their equidimensional counterparts do. We follow the method in \cite{lim2019geometric} to relate elements in $\Herm^{++}_r$ and $\Herm^{++}_s$ by ellipsoids associated to them. To be more precise, given $Z\in \Herm^{++}_m$, we define its associated ellipsoid by  
\[
E_Z \coloneqq \{x\in \mathcal{k}^m: x^\ast Z x \le 1 \}.
\]
Without loss of generality, we assume $r \le s$. For $C \in \Herm^{++}_r$ and $D \in \Herm^{++}_s$, we consider
\begin{align*}
\Omega_+(C) &\coloneqq \left\lbrace
Y\in \Herm_s^{++}: E_C \subseteq E_Y
\right\rbrace = \left\lbrace
Y\in \Herm_s^{++}: Y_{11} \preceq C
\right\rbrace, \\
\Omega_-(D) &\coloneqq \left\lbrace
X\in \Herm_r^{++}: E_X \subseteq E_D
\right\rbrace = \left\lbrace
X \in \Herm_r^{++}: D_{11} \preceq X
\right\rbrace.
\end{align*}

Let $\Delta_m: \Herm_m^{++} \times \Herm_m^{++} \to \mathbb{R}$ be a function of the form 
\begin{equation}\label{eq:delta}
\Delta_m (Z,W) = g ( \lambda_1 (f(Z,W)),\dots,  \lambda_m (f(Z,W))),\quad (Z,W) \in \Herm_m^{++} \times \Herm_m^{++}.
\end{equation}
Here $f: \Herm_m^{++} \times \Herm_m^{++} \to \Herm_m$ is a map, $g: \mathbb{R}^m \to \mathbb{R}$ is a function and $ \lambda_1(Z) \ge \dots \ge \lambda_m(Z)$ are eigenvalues of $Z\in \Herm_m$. 
\begin{lemma}\label{lem:Delta-}
Let $\Delta_r: \Herm_r^{++} \times \Herm_r^{++} \to \mathbb{R}$ be a function as in \eqref{eq:delta}. For $C \in \Herm_r^{++}$ and $D \in \Herm_s^{++}$, if $f(C,\cdot) : \Herm_r^{++} \to \Herm_r^{++}$ is surjective and order preserving in the sense that 
\[
X \preceq X' \implies f(C, X) \le f(C,X'),
\]
then 
\[
\Delta_r(C,\Omega_{-}(D))  \coloneqq \min_{X\in \Omega_{-}(D)} \Delta_r(C,X) = 
\min_{\substack{ t_1 \ge \cdots  \ge t_r  \\  t_k \ge \lambda_k(f(C,D_{11})), 1 \le k \le r }} g(t_1,\dots, t_j).
\]
\end{lemma}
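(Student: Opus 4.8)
The plan is to reduce the constrained minimization over the set $\Omega_{-}(D)$ to a minimization over eigenvalue tuples, exploiting the order-preserving property of $f(C,\cdot)$ together with the special form \eqref{eq:delta} of $\Delta_r$. First I would recall that $\Omega_{-}(D) = \{X \in \Herm_r^{++}: D_{11} \preceq X\}$, so the constraint is precisely that $X$ dominates the fixed matrix $D_{11}$ in the Löwner order. The key structural observation is that $\Delta_r(C,X) = g(\lambda_1(f(C,X)),\dots,\lambda_r(f(C,X)))$ depends on $X$ only through the eigenvalues of $f(C,X)$, so the minimization naturally factors through the spectrum of $f(C,X)$.

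The main step is to characterize the set of eigenvalue tuples $(\lambda_1(f(C,X)),\dots,\lambda_r(f(C,X)))$ as $X$ ranges over $\Omega_{-}(D)$. Here the order-preserving hypothesis is decisive: since $D_{11} \preceq X$ implies $f(C,D_{11}) \le f(C,X)$ in the Löwner order, Weyl's monotonicity theorem for eigenvalues of Hermitian matrices gives $\lambda_k(f(C,D_{11})) \le \lambda_k(f(C,X))$ for every $1 \le k \le r$. This shows that the attainable tuples are contained in the set $\{(t_1,\dots,t_r): t_1 \ge \cdots \ge t_r,\ t_k \ge \lambda_k(f(C,D_{11}))\}$, which is exactly the feasible region in the claimed formula. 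For the reverse inclusion, I would use the surjectivity of $f(C,\cdot): \Herm_r^{++} \to \Herm_r^{++}$: given any admissible tuple $(t_1,\dots,t_r)$ with $t_k \ge \lambda_k(f(C,D_{11}))$, one constructs a Hermitian matrix $W$ with those prescribed eigenvalues such that $W \ge f(C,D_{11})$, and then pulls it back through surjectivity to obtain some $X \in \Herm_r^{++}$ with $f(C,X) = W$; one must then verify $X \in \Omega_{-}(D)$, i.e. $D_{11} \preceq X$.

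The hard part, and the step I expect to be the main obstacle, is precisely this reverse inclusion: establishing that every eigenvalue tuple above the threshold is realized by some admissible $X$. Surjectivity of $f(C,\cdot)$ only guarantees a preimage $X$ of a chosen $W$, but it does not by itself force $X \in \Omega_{-}(D)$; one needs the preimage $X$ to satisfy $D_{11} \preceq X$. The cleanest route is to choose $W$ to share eigenvectors with $f(C,D_{11})$ (so that $W \ge f(C,D_{11})$ holds by comparing eigenvalues in a common basis) and then invoke order-preservation in the contrapositive direction, or more carefully to argue that the infimum is approached along matrices $X$ that do lie in $\Omega_{-}(D)$ so that the equality of the two minima holds even if individual tuples near the boundary require a limiting argument. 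Once both inclusions of eigenvalue tuples are in hand, the equality
\[
\min_{X \in \Omega_{-}(D)} \Delta_r(C,X) = \min_{\substack{t_1 \ge \cdots \ge t_r \\ t_k \ge \lambda_k(f(C,D_{11})),\ 1 \le k \le r}} g(t_1,\dots,t_r)
\]
follows by substituting the characterization of attainable spectra into the definition of $\Delta_r$ and noting that $g$ acts only on the eigenvalue coordinates.
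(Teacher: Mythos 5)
Your proposal follows the same route as the paper's proof: the inclusion ``attainable spectra $\subseteq$ feasible region'' via order preservation of $f(C,\cdot)$ together with monotonicity of ordered eigenvalues under the L\"owner order, and the reverse inclusion via surjectivity. That forward half is complete and matches the paper essentially line for line.

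The step you single out as the obstacle is indeed the only nontrivial point, and the paper's own proof dismisses it with the phrase ``as $f$ is surjective,'' which is exactly as incomplete as you suspect: an arbitrary preimage $X$ of a matrix $W \succeq f(C,D_{11})$ need not satisfy $X \succeq D_{11}$, because order preservation is a one-way implication. Your proposed repair by ``order-preservation in the contrapositive direction'' does not work --- the contrapositive of $X \succeq D_{11} \Rightarrow f(C,X) \succeq f(C,D_{11})$ says nothing about preimages of matrices dominating $f(C,D_{11})$. Your other suggestion, choosing $W$ to share eigenvectors with $f(C,D_{11})$, is the right one, but making it rigorous uses more than the stated hypotheses: in every application of the lemma (Theorems~\ref{thm:invariant distance} and \ref{thm:divergence}) one has $f(C,X) = h\bigl(C^{-1/2} X C^{-1/2}\bigr)$ up to unitary similarity, with $h$ an increasing scalar bijection applied by functional calculus. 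Writing $C^{-1/2}D_{11}C^{-1/2} = U \diag(\mu_1,\dots,\mu_r) U^\ast$ and setting $X \coloneqq C^{1/2} U \diag\bigl(h^{-1}(t_1),\dots,h^{-1}(t_r)\bigr) U^\ast C^{1/2}$ produces a preimage that is simultaneously diagonalizable with $D_{11}$ after congruence by $C^{-1/2}$, so $X \succeq D_{11}$ reduces to the scalar inequalities $h^{-1}(t_k) \ge \mu_k$, which follow from $t_k \ge h(\mu_k) = \lambda_k(f(C,D_{11}))$ and the monotonicity of $h^{-1}$. With only ``surjective and order preserving'' you cannot control which preimage surjectivity hands you, so your write-up should replace the contrapositive remark by this explicit construction (or add the functional-calculus/order-isomorphism hypothesis that the applications actually satisfy); it also suffices, and is slightly easier, to realize only the minimizing tuple of the right-hand side rather than every feasible tuple.
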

\begin{proof}
By definition, $X \in \Omega_{-}(D)$ if and only if $X \succeq D_{11}$. Since $f(C,\cdot)$ is order preserving, we have $ f(C,X) \succeq f(C,D_{11})$. This implies that $\lambda_k(f(C,X)) \ge \lambda_k(f(C,D_{11}))$, from which we obtain the desired formula for $\delta(C,\Omega_{-}(D))$ as $f$ is surjective.
\end{proof}
\begin{lemma}\label{lem:Delta+}
Let $\Delta_s: \Herm_s^{++} \times \Herm_s^{++} \to \mathbb{R}$ be a function as in \eqref{eq:delta}. We assume that there exists some surjective increasing function $h: \mathbb{R} \to \mathbb{R}$ such that $\lambda_j (f(I_s,Y)) = h(\lambda_j (Y))$ for any $1\le j \le s$ and $Y \in \Herm_s^{++}$. We further assume that $\Delta_s$ is congruence invariant, i.e., $\Delta_s(G X G^\tp, G Y G^\tp) = \Delta(X,Y)$, $X,Y\in \Herm_s^{++}$ and $G \in \GL_s(\mathcal{k})$. For $C \in \Herm_r^{++}$ and $D\in \Herm_s^{++}$, we have 
\[
\Delta_s (\Omega_{+}(C),D) \coloneqq \min_{Y\in \Omega_+(C)} \Delta_s (Y, D)  =   \min_{\substack{ 
t_1 \ge \cdots \ge t_s, \\
h(\lambda_k(D_{11}^{-1} C)) \ge t_{j_k}, ~\text{for some}~1 \le j_1 < \cdots < j_r \le s } } 
g(t_1,\dots, t_s).
\]
\end{lemma}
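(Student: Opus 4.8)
The plan is to reduce the minimization defining $\Delta_s(\Omega_+(C),D)$ to a question about the attainable generalized spectrum of the pencil $(D,Y)$, and then to characterize that spectrum by interlacing on one side and an explicit construction on the other. The first step is to use congruence invariance to put $\Delta_s(Y,D)$ into spectral form: writing $Y=MM^\tp$ and applying $\Delta_s(GXG^\tp,GZG^\tp)=\Delta_s(X,Z)$ with $G=M^{-1}$ gives $\Delta_s(Y,D)=\Delta_s(I_s,M^{-1}DM^{-\tp})$, and the hypothesis $\lambda_j(f(I_s,\cdot))=h(\lambda_j(\cdot))$ then yields
\[
\Delta_s(Y,D)=g\bigl(h(\lambda_1(Y^{-1}D)),\dots,h(\lambda_s(Y^{-1}D))\bigr),
\]
since $\lambda_j(M^{-1}DM^{-\tp})=\lambda_j(Y^{-1}D)$. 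As $h$ is an increasing bijection, writing $t_j=h(\lambda_j(Y^{-1}D))$ identifies $\Delta_s(\Omega_+(C),D)$ with the minimum of $g(t_1,\dots,t_s)$ over all spectra realized by some $Y\in\{Y\succ0:Y_{11}\preceq C\}$, so the lemma reduces to describing this attainable set of tuples.

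For one inequality (every attainable spectrum obeys the stated constraint) I would invoke two standard facts about the pencil. Compressing $(D,Y)$ to the coordinate subspace $\mathcal{k}^r\subseteq\mathcal{k}^s$ produces the $r\times r$ pencil $(D_{11},Y_{11})$, and the Courant--Fischer min--max gives the generalized Cauchy interlacing $\lambda_k(Y^{-1}D)\ge\lambda_k(Y_{11}^{-1}D_{11})\ge\lambda_{k+s-r}(Y^{-1}D)$ for $1\le k\le r$. Monotonicity of generalized eigenvalues in the metric matrix, together with $Y_{11}\preceq C$, then compares $\lambda_k(Y_{11}^{-1}D_{11})$ with the eigenvalues of $(D_{11},C)$, i.e. with $\lambda_k(C^{-1}D_{11})$ and hence with the $\lambda_\bullet(D_{11}^{-1}C)$. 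Chaining the two produces exactly $r$ inequalities between the full spectrum $\{\lambda_j(Y^{-1}D)\}_{j=1}^s$ and the spectrum of $(D_{11},C)$; these are precisely the assertion that there exist indices $1\le j_1<\cdots<j_r\le s$ meeting the displayed bound, the existential quantifier recording that the leading block pins only $r$ of the $s$ generalized eigenvalues while the remaining $s-r$ stay free.

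For the reverse inequality I would realize an arbitrary admissible tuple by an explicit feasible $Y$. After a block-diagonal congruence $\operatorname{diag}(G_1,I_{s-r})$, which preserves both $Y_{11}\preceq C$ and the generalized spectrum, I may assume $C=I_r$ and $D_{11}$ diagonal, so the threshold values become explicit. I would then take $Y$ with $Y_{11}=I_r$, the extreme permitted by the constraint, and use the lower-right block together with its coupling to the leading block to drive the $s-r$ unconstrained eigenvalues to their limiting values, recovering any tuple in the admissible region; continuity of the spectrum and compactness after an a priori truncation give attainment of the minimum, so $\Delta_s(\Omega_+(C),D)$ equals the right-hand side.

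The hard part will be this achievability step: prescribing the generalized spectrum of $Y$ while respecting the block inequality $Y_{11}\preceq C$ is a constrained inverse-eigenvalue problem, and one must verify that the $s-r$ free directions suffice to reach every admissible configuration without violating positive definiteness, i.e. the Schur-complement condition $Y_{22}\succ Y_{21}Y_{11}^{-1}Y_{12}$. The other delicate point is the index bookkeeping in the interlacing step, namely matching the chain of inequalities to the exact existential condition with the correct appearance of $\lambda_k(D_{11}^{-1}C)$; here the decreasing ordering conventions for the generalized eigenvalues must be tracked with care.
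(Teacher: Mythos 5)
Your overall strategy---a congruence reduction to spectral form, Cauchy interlacing for necessity, and an explicit construction for sufficiency---is the same as the paper's, but your choice of normalization creates two concrete problems. First, your inequalities point the wrong way relative to the displayed constraint set. Normalizing $Y\to I_s$ gives $\Delta_s(Y,D)=g\bigl(h(\lambda_1(Y^{-1}D)),\dots,h(\lambda_s(Y^{-1}D))\bigr)$, and your interlacing-plus-monotonicity chain then yields the \emph{lower} bounds $\lambda_k(Y^{-1}D)\ge\lambda_k(Y_{11}^{-1}D_{11})\ge\lambda_k(C^{-1}D_{11})$ on the $r$ largest generalized eigenvalues, not the upper bounds $t_{j_k}\le h(\lambda_k(D_{11}^{-1}C))$ of the statement. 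Converting one into the other forces you to pass to the reciprocal spectrum $\lambda_j(D^{-1}Y)=1/\lambda_{s-j+1}(Y^{-1}D)$, which changes the tuple at which $g$ is evaluated; since $g$ is not assumed invariant under reversal combined with $t\mapsto h(1/h^{-1}(t))$, the two resulting optimization problems have different values in general (try $g(t)=\sum_j(t_j-\log t_j-1)$, $h=\mathrm{id}$, $r=1$, $s=2$). So the ``index bookkeeping'' you defer is not bookkeeping: you must commit to one spectral parametrization and check it against both the hypothesis on $f(I_s,\cdot)$ and the displayed formula. The paper avoids the issue by normalizing $D\to I_s$ instead, using a single block-triangular congruence $Z$ with $ZDZ^\ast=I_s$ \emph{and} $Z\,\Omega_+(C)\,Z^\ast=\Omega_+(\Sigma)$, $\Sigma=\diag(\lambda_1(D_{11}^{-1}C),\dots,\lambda_r(D_{11}^{-1}C))$ (this is the content of Lemma~\ref{lem:Z}); the problem then becomes minimizing $g(h(\lambda_1(Y')),\dots,h(\lambda_s(Y')))$ over $Y'\succ 0$ with $Y'_{11}\preceq\Sigma$, with ordinary eigenvalues and one consistent parametrization $t_j=h(\lambda_j(Y'))$. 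Your block-diagonal congruence $\diag(G_1,I_{s-r})$ cannot reach this configuration because it leaves $D_{12}$ and $D_{22}$ untouched.

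Second, the achievability step you flag as ``the hard part'' is hard only in your coordinates, and your sketch does not close it: fixing $Y_{11}=I_r$ (the extreme of the constraint) and tuning the lower-right block cannot realize admissible tuples whose constrained eigenvalues sit strictly inside their allowed half-lines---those require $Y_{11}$ strictly below $C$---and prescribing the spectrum of the pencil $(D,Y)$ subject to $Y_{11}\preceq C$ with $D_{12}\neq 0$ is a genuinely nontrivial inverse eigenvalue problem that you have not solved. In the paper's normalized picture this step is immediate: given an admissible tuple one takes $Y'$ diagonal with entries $h^{-1}(t_{j_1}),\dots,h^{-1}(t_{j_r})$ in the leading block and the remaining $h^{-1}(t_j)$ elsewhere, so that $Y'_{11}\preceq\Sigma$ holds entrywise and $\lambda_j(Y')=h^{-1}(t_j)$ by the monotonicity and surjectivity of $h$. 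I recommend redoing both halves after first applying the congruence of Lemma~\ref{lem:Z}.
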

\begin{proof}
Since $\Delta_s$ is congruence invariant, by \cite[Section~V--B]{lim2019geometric} we may choose some $G \in \GL_n(\mathcal{k})$
\[
\delta(\Omega_+(C),D) = \delta (G \Omega_+(C) G^\ast,G D G^\ast) = \delta(\Omega_+(\Sigma), I_s),
\]
where $\Sigma = \diag(\lambda_1(D_{11}^{-1} C),\dots, \lambda_r(D_{11}^{-1}C))$. Thus we have 
\begin{align*}
\delta(\Omega_+(C),D)  &= \delta(I_s, \Omega_{+}(\Sigma)) \\
&= \min_{Y\in \Omega_{+}(\Sigma)} \delta(I_s,Y) \\
&= \min_{\Sigma \succeq Y_{11}} g(\lambda_1(f(I_s,Y)),\dots, \lambda_n(f(I_s,Y))) \\
&= \min_{ \Sigma \succeq Y_{11} } 
g(h(\lambda_1(Y)),\dots, h(\lambda_s(Y)))\\
&= \min_{\substack{t_j = h( \lambda_j(Y) ), \\ \Sigma \succeq Y_{11}} } g(t_1,\dots, t_s) \\
& = \min_{\substack{ 
t_1 \ge \cdots \ge t_s, \\
h(\lambda_k(D_{11}^{-1} C)) \ge t_{j_k}, ~\text{for some}~1 \le j_1 < \cdots < j_r \le s
}} g(t_1,\dots, t_s)
\end{align*}
The last equality holds since the sets 
\begin{align*}
S_1 &\coloneqq \lbrace 
(t_1,\dots, t_s)\in \mathbb{R}^s: t_j = h(\lambda_j(Y)),~1 \le j \le s,~\Sigma \succeq Y_{11}
\rbrace,  \\
S_2 &\coloneqq \lbrace
(t_1,\dots, t_s)\in \mathbb{R}^s: t_1 \ge \cdots \ge t_s,~h(\lambda_k(D_{11}^{-1} C)) \ge t_{j_k},~\text{for some}~1 \le j_1 < \cdots < j_r \le s
\rbrace
\end{align*}
are exactly the same. Indeed, $S_1 \subseteq S_2 $ since $h$ is increasing and $\lambda_k(\Sigma) \ge \lambda_k(Y_{11}) \ge \lambda_k(Y)$ for each $1 \le k \le r$. Conversely, for any $(t_1,\dots, t_s) \in S_2$, we take $Y = \diag (\tau_{j_1},\dots, \tau_{j_r}, \tau_{j_r + 1} ,\dots, \tau_{j_s})$ where $\tau_j = h^{-1}(t_j), 1 \le j \le s$ and $\{ j_{r+1},\dots, j_s \} = \{r+1,\dots, s\}$. We notice that the existence of $Y$ is ensured by the surjectivity of $h$. Moreover, we must have $\tau_1 \ge \cdots \ge \tau_s$ by the monotonicity of $h$. This indicates that $\lambda_j(Y) = \tau_j, 1 \le j \le s$. We also observe that $\tau_{j_k} \le \lambda_k (D^{-1}_{11}C)$. Thus  $Y_{11} = \diag (\tau_{j_1},\dots, \tau_{j_r}) \preceq \Sigma$ and this implies $S_2 \subseteq S_1$.
\end{proof}
\subsection{Consistency of geodesic distance}\label{subsec:geodist}
As a direct consequence of \cite[Theorem~ 2.1]{TP21}, we have the following characterization of congruence invariant geodesic distances on $\Herm_n^{++}$.
\begin{lemma}\label{lem:characterization}
Let $m$ be a positive integer and let $\delta: \Herm_m^{++} \times  \Herm_m^{++} \to \mathbb{R}$ be a geodesic distance function with respect to some Riemannian metric on $\Herm_m^{++}$. Then $\delta$ is congruence invariant if and only if there exist some constants $\alpha > 0$ and $\beta > -\alpha/m$ such that 
\[
\delta(X,Y) =\sqrt{ \alpha \lVert  \log(X^{-1}Y) \rVert^2 + \beta \log^2 \det(X^{-1}Y) }.
\]
\end{lemma}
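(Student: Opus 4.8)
The plan is to invoke the cited characterization \cite[Theorem~2.1]{TP21} and translate it into the stated form. That theorem presumably classifies all Riemannian metrics on $\Herm_m^{++}$ (equivalently, inner products on the tangent space $\Herm_m$ at $I_m$) that are invariant under the congruence action $X \mapsto G X G^\ast$. The key structural fact is that this action is transitive on $\Herm_m^{++}$, with isotropy group $U(m)$ at the identity $I_m$; hence a congruence-invariant metric is determined by its value at $I_m$, which must be an $\operatorname{Ad}(U(m))$-invariant inner product on $\Herm_m$. First I would recall this homogeneity: since $\GL_m(\mathcal{k})$ acts transitively and the geodesic distance between $X$ and $Y$ equals, after moving $X$ to $I_m$ by $G = X^{-1/2}$, the distance between $I_m$ and $X^{-1/2} Y X^{-1/2}$, so the problem reduces to understanding distances from the identity.

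Next I would analyze the $U(m)$-invariant inner products on $\Herm_m$. Decomposing $\Herm_m = \mathbb{R} I_m \oplus \Herm_m^0$, where $\Herm_m^0$ is the trace-zero part, the adjoint action of $U(m)$ fixes the line $\mathbb{R} I_m$ and acts irreducibly on $\Herm_m^0$ (this is the standard irreducibility of the traceless self-adjoint matrices under conjugation). By Schur's lemma, any invariant inner product is a positive combination $\alpha \langle \cdot,\cdot\rangle_0 + (\text{scalar}) \langle \cdot,\cdot\rangle_{\mathbb{R} I_m}$, i.e.\ it has the form $\langle H, H\rangle = \alpha \tr(H^2) + \gamma (\tr H)^2$ for some $\alpha > 0$ and some $\gamma$, with the positivity constraint forcing $\alpha + m\gamma > 0$ on the $I_m$-direction. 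Writing $\beta = \gamma$ and computing the geodesic from $I_m$, which along an invariant metric of this diagonal-in-the-splitting form is $t \mapsto \exp(tH)$, the squared length yields $\alpha \lVert H\rVert^2 + \beta (\tr H)^2$ with $H = \log(X^{-1/2} Y X^{-1/2})$. Finally I would rewrite $\tr H = \log\det(X^{-1}Y)$ and $\lVert H\rVert^2 = \lVert \log(X^{-1}Y)\rVert^2$ (using that $X^{-1/2} Y X^{-1/2}$ and $X^{-1} Y$ share eigenvalues, hence the same singular values of their logarithms up to the congruence that preserves the Frobenius norm), recovering the displayed formula; the constraint $\beta > -\alpha/m$ is exactly the condition $\alpha + m\beta > 0$ ensuring positive-definiteness in the $I_m$-direction.

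The main obstacle I anticipate is the careful bookkeeping in the last reduction, namely verifying that $\lVert \log(X^{-1}Y)\rVert$ and $\log\det(X^{-1}Y)$ are the correct congruence-invariant expressions. The matrix $X^{-1}Y$ is generally not self-adjoint, so $\log(X^{-1}Y)$ must be interpreted via the fact that $X^{-1}Y$ is similar to the positive definite $X^{-1/2} Y X^{-1/2}$; one must confirm the invariant metric length of the geodesic is genuinely captured by these two quantities and that both are invariant under $X,Y \mapsto G X G^\ast, G Y G^\ast$. Provided \cite[Theorem~2.1]{TP21} already supplies the classification of congruence-invariant Riemannian metrics (or their geodesic distances) in an essentially equivalent form, the bulk of the argument is this identification of constants and the translation of the invariant inner product into the closed-form distance, so I would keep the proof short by citing that theorem and only spelling out the Schur-lemma decomposition and the $\exp$-geodesic computation needed to match the stated expression.
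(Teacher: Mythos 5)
Your proposal is correct and follows essentially the same route as the paper, which states this lemma with no argument beyond "as a direct consequence of \cite[Theorem~2.1]{TP21}"; your reconstruction via the transitive congruence action, the Schur-lemma splitting $\Herm_m = \mathbb{R}I_m \oplus \Herm_m^0$, and the symmetric-space geodesics $t\mapsto \exp(tH)$ is exactly the content that citation stands in for, and your positivity constraint $\alpha + m\beta>0$ matches the stated $\beta>-\alpha/m$. The one imprecision is the parenthetical claim that the similarity relating $\log(X^{-1}Y)$ and $\log(X^{-1/2}YX^{-1/2})$ "preserves the Frobenius norm" --- it does not in general; rather $\lVert \log(X^{-1}Y)\rVert$ must be read, as the paper does elsewhere, as the spectral quantity $\bigl(\sum_i \log^2\lambda_i(X^{-1}Y)\bigr)^{1/2}$, which coincides with the Frobenius norm of the Hermitian representative $\log(X^{-1/2}YX^{-1/2})$ because the two matrices share eigenvalues.
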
 
\begin{theorem}[consistency of geodesic distance]\label{thm:invariant distance}
Let $r \le s$ be positive integers and let $\alpha > 0$, $\beta > -\alpha/s$ be some fixed real numbers. Suppose 
\begin{align*}
\delta_r (X,Y) &= \sqrt{ \alpha \lVert  \log(X^{-1}Y) \rVert^2 + \beta \log^2 \det(X^{-1}Y) },\quad (X,Y) \in \Herm_r^{++} \times \Herm_r^{++}, \\
\delta_s (Z,W) &=\sqrt{ \alpha \lVert  \log(Z^{-1}W) \rVert^2 + \beta \log^2 \det(Z^{-1}W) },\quad (Z,W) \in \Herm_s^{++} \times \Herm_s^{++}.
\end{align*}
Then for each pair $(C,D) \in \Herm_r^{++} \times \Herm_s^{++}$ we have 
\[
\delta_s(\Omega_{+}(C),D)  \le \delta_r (C, \Omega_{-}(D)).
\]
Moreover, $\delta_s(\Omega_{+}(C),D)  = \delta_r (C, \Omega_{-}(D))$ for all $(C,D) \in \Herm_r^{++} \times \Herm_s^{++}$ if and only if either $\beta = 0$ or $r = s$.
\end{theorem}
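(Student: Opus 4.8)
The plan is to reduce both sides to explicit quadratic programs in eigenvalue variables, solve the easier one exactly, and feed its optimizer into the other as an admissible competitor. Write $f(X,Y)=\log(X^{-1/2}YX^{-1/2})$, $g(t_1,\dots,t_m)=\sqrt{\alpha\sum_i t_i^2+\beta(\sum_i t_i)^2}$ and $h=\log$; then both $\delta_r$ and $\delta_s$ have the form \eqref{eq:delta}, since the eigenvalues of $f(X,Y)$ are the logarithms of those of $X^{-1}Y$. I would check the hypotheses of Lemmas~\ref{lem:Delta-} and~\ref{lem:Delta+}: $f(C,\cdot)$ is surjective onto the self-adjoint matrices and order preserving (operator monotonicity of $\log$), $\lambda_j(f(I_s,Y))=\log\lambda_j(Y)=h(\lambda_j(Y))$, and $\delta_s$ is congruence invariant by Lemma~\ref{lem:characterization}. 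Setting $a_k:=\log\lambda_k(C^{-1}D_{11})$ (so $a_1\ge\cdots\ge a_r$) and $b_k:=\log\lambda_k(D_{11}^{-1}C)=-a_{r-k+1}$, the two lemmas give, with $Q_m(t):=\alpha\sum_i t_i^2+\beta(\sum_i t_i)^2$,
\[
\delta_r(C,\Omega_-(D))^2=\min\{Q_r(t): t_1\ge\cdots\ge t_r,\ t_k\ge a_k\ \forall k\},
\]
\[
\delta_s(\Omega_+(C),D)^2=\min\{Q_s(\tau): \tau_1\ge\cdots\ge\tau_s,\ \tau_{j_k}\le b_k\ \text{for some}\ j_1<\cdots<j_r\}.
\]
Since $\beta>-\alpha/s\ge-\alpha/r$, the form $Q_m$ is positive definite for $m\in\{r,s\}$, so both programs are strictly convex with unique minimizers; denote the optimal values by $Q_1^\ast$ and $Q_2^\ast$, and let $t^\ast$ minimize the first, with $P:=\sum_k t_k^\ast$.

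Next I would dispose of the order constraint and the existential matching in the second program. Because $Q_s$ is permutation invariant, I may replace any feasible $\tau$ by its decreasing rearrangement without changing the objective, and I claim that if some injection $\sigma$ satisfies $\tau_{\sigma(k)}\le b_k$ for all $k$, then the rearrangement $\tau^\downarrow$ satisfies $\tau^\downarrow_{s-r+k}\le b_k$, so that the matching $j_k=s-r+k$ always works. Indeed, for each $k$ the entries $\tau_{\sigma(k)},\dots,\tau_{\sigma(r)}$ are $r-k+1$ distinct values all bounded by $b_k$ (using $b_{k'}\le b_k$ for $k'\ge k$), whence the $(r-k+1)$-th smallest entry of $\tau$ is $\le b_k$. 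Consequently $Q_2^\ast\le Q_s(\tau)$ for \emph{any} $\tau\in\mathbb{R}^s$ (ordered or not) for which some $r$ of its coordinates can be matched to the bounds $b_1\ge\cdots\ge b_r$.

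For the inequality itself I build such a competitor from $t^\ast$. Let $\tilde\tau\in\mathbb{R}^s$ have $s-r$ coordinates equal to a free level $w$ and the remaining $r$ coordinates equal to $-t_1^\ast,\dots,-t_r^\ast$. Feasibility holds because $-t_k^\ast\le-a_k=b_{r-k+1}$, so the $r$ special coordinates match the bounds bijectively. Minimizing the single variable $w$ gives $w=\beta P/(\alpha+(s-r)\beta)$, where $\alpha+(s-r)\beta>0$ again by $\beta>-\alpha/s$ and $r\le s$, and a direct computation yields
\[
Q_s(\tilde\tau)=Q_r(t^\ast)-\frac{(s-r)\beta^2P^2}{\alpha+(s-r)\beta}=Q_1^\ast-\frac{(s-r)\beta^2P^2}{\alpha+(s-r)\beta}.
\]
As $s\ge r$ and the denominator is positive, the subtracted term is nonnegative, so $Q_2^\ast\le Q_s(\tilde\tau)\le Q_1^\ast$, which is precisely $\delta_s(\Omega_+(C),D)\le\delta_r(C,\Omega_-(D))$.

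For the equality characterization I would treat the two sufficient cases and then the converse. If $\beta=0$ both programs decouple coordinatewise and evaluate to $\alpha\sum_j\max(a_j,0)^2$. If $r=s$ the reversal map $t\mapsto(-t_r,\dots,-t_1)$ preserves $g$ and carries the feasible set $\{t_k\ge a_k\}$ bijectively onto $\{\tau_k\le b_k\}$, so the two optima coincide; hence equality holds throughout in both cases. Conversely, if $\beta\ne0$ and $r<s$, taking $C=I_r$ and $D=2I_s$ gives $a_k=\log2>0$, so the minimizer has $P=r\log2>0$ and the subtracted term above is strictly positive, forcing $Q_2^\ast<Q_1^\ast$. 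I expect the combinatorial middle step to be the main obstacle: correctly removing the order constraint together with the existential matching, and verifying that the rearrangement of the competitor $\tilde\tau$ stays feasible, is what makes the clean slack formula—and with it both the inequality and its sharp equality cases—available.
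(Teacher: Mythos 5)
Your proof is correct and follows essentially the same route as the paper: both reduce the two point-set distances to quadratic programs in log-eigenvalue variables via Lemmas~\ref{lem:Delta-} and~\ref{lem:Delta+}, normalize the existential matching by permutation invariance, and compare the programs by padding the $r$-dimensional optimizer with $s-r$ extra coordinates. The only differences are refinements: you make the rearrangement/matching step explicit, you optimize the free level $w$ exactly to get a closed-form slack $\frac{(s-r)\beta^2P^2}{\alpha+(s-r)\beta}$ (which yields the inequality and the strictness for $\beta\ne0$, $r<s$ in one stroke, where the paper instead argues existence of a suitable perturbation $x$), and you verify the $\beta=0$ case directly rather than citing \cite{lim2019geometric}.
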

\begin{proof}
We observe that 
\small
\begin{align*}
\delta_r^2(X,Y) &= \alpha \sum_{k=1}^r \log^2(\lambda_k(X^{-1}Y)) + \beta \left(  \sum_{k=1}^r \log(\lambda_k(X^{-1}Y)) \right)^2 \\
&= \begin{bmatrix}
\log (\lambda_1(X^{-1}Y)) & \cdots & \log (\lambda_r(X^{-1}Y))
\end{bmatrix} \begin{bmatrix}
\alpha + \beta & \beta & \cdots & \beta \\
\beta & \alpha + \beta & \cdots & \beta \\
\vdots & \vdots & \ddots & \vdots \\
\beta & \beta & \cdots & \alpha +\beta 
\end{bmatrix} \begin{bmatrix}
\log (\lambda_1(X^{-1}Y)) \\
\vdots \\
\log (\lambda_r(X^{-1}Y))
\end{bmatrix}.
\end{align*}\normalsize
Thus according to Lemma~\ref{lem:Delta-}, we have 
\begin{equation}\label{thm:invariant distance:eq:delta-}
\delta^2_r(C,\Omega_{-}(D)) = \min_{\substack{ t_1 \ge \cdots \ge t_r  \\ t_k \ge \log (\lambda_k)}} \left( \begin{bmatrix}
t_1 & \cdots & t_r 
\end{bmatrix} \begin{bmatrix}
\alpha + \beta & \beta & \cdots & \beta \\
\beta & \alpha + \beta & \cdots & \beta \\
\vdots & \vdots & \ddots & \vdots \\
\beta & \beta & \cdots & \alpha +\beta 
\end{bmatrix} \begin{bmatrix}
t_1 \\
\vdots \\
t_r
\end{bmatrix} \right).
\end{equation}
Here for brevity we denote $\lambda_k \coloneqq \lambda_k(C^{-1}D_{11}),1 \le k \le r$. Similarly, by Lemma~\ref{lem:Delta+} and the fact that $\lambda_k(Z^{-1}) = \lambda_{m-k+1} (Z)^{-1}$ for any $Z\in \Herm_m^{++}$ and $1 \le k \le m$, we may also write $\delta^2_s(\Omega_{+}(C),D)$ as 
\begin{align*}
& \min_{ \substack{t_1 \ge \cdots \ge t_s , \\  
-\log(\lambda_{r-k+1}) \ge t_{j_k}, ~\text{some}~1 \le j_1 < \cdots < j_r \le s 
}} \left( \begin{bmatrix}
t_1 & \cdots & t_s 
\end{bmatrix} \begin{bmatrix}
\alpha + \beta & \beta & \cdots & \beta \\
\beta & \alpha + \beta & \cdots & \beta \\
\vdots & \vdots & \ddots & \vdots \\
\beta & \beta & \cdots & \alpha +\beta 
\end{bmatrix} \begin{bmatrix}
t_1 \\
\vdots \\
t_s
\end{bmatrix} \right) \\
= & \min_{\substack{\tau_{1} \ge \dots \ge \tau_s , \\
\tau_{s - j_k + 1} \ge \log(\lambda_{r - k + 1}),~\text{some}~1 \le j_1 < \cdots < j_r \le s
}} \left( \begin{bmatrix}
\tau_s & \cdots & \tau_{1}
\end{bmatrix} \begin{bmatrix}
\alpha + \beta & \beta & \cdots & \beta \\
\beta & \alpha + \beta & \cdots & \beta \\
\vdots & \vdots & \ddots & \vdots \\
\beta & \beta & \cdots & \alpha +\beta 
\end{bmatrix} \begin{bmatrix}
\tau_s \\
\vdots \\
\tau_{1}
\end{bmatrix} \right).
\end{align*}
Here the last equality is obtained by taking $\tau_{s-j+1} = -t_j, 1\le j \le s$. Since the objective function in the above expression of $\delta^2_s(\Omega_{+}(C),D)$ is invariant under permutations of variables, we may further rewrite 
\begin{equation}\label{thm:invariant distance:eq:delta+}
\delta^2_s(\Omega_{+}(C),D) = 
\min_{\substack{\tau_{1} \ge \dots \ge \tau_r \\
\tau_{k} \ge \log(\lambda_k)}} \left( \begin{bmatrix}
\tau_{1} & \cdots & \tau_{s} 
\end{bmatrix} \begin{bmatrix}
\alpha + \beta & \beta & \cdots & \beta \\
\beta & \alpha + \beta & \cdots & \beta \\
\vdots & \vdots & \ddots & \vdots \\
\beta & \beta & \cdots & \alpha +\beta 
\end{bmatrix} \begin{bmatrix}
\tau_1 \\
\vdots \\
\tau_s
\end{bmatrix} \right).
\end{equation}

On the other hand, for any $t'\in \mathcal{k}^r, t^{''}\in \mathcal{k}^{s-r}, A' \in \mathcal{k}^{r\times r}, b\in \mathcal{k}^{r \times (s-r)}$ and $A^{''} \in \mathcal{k}^{(s-r) \times (s-r)}$, it holds that 
\[
\begin{bmatrix}
t' & t^{''}
\end{bmatrix} \begin{bmatrix}
A' & b \\
b^\ast & A^{''} 
\end{bmatrix} \begin{bmatrix}
{t'}^\ast \\ {t^{''}}^\ast
\end{bmatrix} = t' A' {t^{'}}^\ast + 2(t^{''}b^\ast {t'}^{\ast}) + t^{''} A^{''} {t^{''}}^\ast.
\]
This implies $\begin{bmatrix}
t' & 0
\end{bmatrix} \begin{bmatrix}
A' & b \\
b^\tp & A^{''} 
\end{bmatrix} \begin{bmatrix}
{t'}^\ast \\0
\end{bmatrix} = t' A' {t^{'}}^\ast$ and thus $\delta_s(\Omega_{+}(C),D) \le \delta_r(C,\Omega_{-}(D))$ according to \eqref{thm:invariant distance:eq:delta-} and \eqref{thm:invariant distance:eq:delta+}. Clearly the equality holds for any $(C,D) \in \Herm_r^{++} \times \Herm_s^{++}$ if $s = r$. The equality for $\beta = 0$ is proved in \cite{lim2019geometric}.

Assume that $t' = (t_1,\dots, t_r)$ is chosen so that $t_k \ge \log(\lambda_k), 1\le k \le r$ and  
\[
\begin{bmatrix}
t_1 & \cdots & t_r 
\end{bmatrix} \begin{bmatrix}
\alpha + \beta & \beta & \cdots & \beta \\
\beta & \alpha + \beta & \cdots & \beta \\
\vdots & \vdots & \ddots & \vdots \\
\beta & \beta & \cdots & \alpha +\beta 
\end{bmatrix} \begin{bmatrix}
t_1 \\
\vdots \\
t_r
\end{bmatrix}  = \delta_{r}(C, \Omega_{-}(D)).
\]
We consider $t^{''} = (t_{r+1},\dots, t_s) = (x, \dots, x)$, $b = \begin{bmatrix}
\beta & \cdots & \beta \\
\vdots & \ddots & \vdots \\
\beta & \cdots & \beta 
\end{bmatrix}\in \mathcal{k}^{r \times (s-r)}$ and 
\[
A' = \begin{bmatrix}
\alpha + \beta & \beta & \cdots & \beta \\
\beta & \beta & \cdots & \beta \\
\vdots & \vdots & \ddots & \vdots \\
\beta & \beta & \cdots & \alpha + \beta
\end{bmatrix} \in \mathcal{k}^{r \times r}, \quad A^{''} = \begin{bmatrix}
\alpha + \beta & \beta & \cdots & \beta \\
\beta & \beta & \cdots & \beta \\
\vdots & \vdots & \ddots & \vdots \\
\beta & \beta & \cdots & \alpha + \beta
\end{bmatrix} \in \mathcal{k}^{(s-r)\times (s - r)}.
\] 
Then 
\begin{align*}
\delta_s(\Omega_{+}(C),D)
&= \begin{bmatrix}
t' & t^{''}
\end{bmatrix} \begin{bmatrix}
A' & b \\
b^\tp & A^{''} 
\end{bmatrix} \begin{bmatrix}
{t'}^\tp \\ {t^{''}}^\tp
\end{bmatrix} \\ 
&= t' A' {t^{'}}^\tp + 2(t^{''}b^\tp {t'}^{\tp}) + t^{''} A^{''} {t^{''}}^\tp \\
&\le \delta_r(C,\Omega_{-}(D))  + \left( 2(s - r)\beta \sum_{k=1}^r t_k \right) x + (s-r)((s-r)\beta + \alpha) x^2.
\end{align*}
If $r < s$ and $\beta \ne 0$, then we take $C,D$ such that $\log (\lambda_k) > 0$ for all $1\le k \le r$. This ensures the existence of some $x\in \mathbb{R}$ such that 
\[
\left( 2(s-r)\beta \sum_{k=1}^r t_k \right) x + (s-r)((s-r)\beta + \alpha) x^2  < 0.
\]
Therefore we obtain $\delta_s(\Omega_{+}(C),D) < \delta_r(C,\Omega_{-}(D))$.
\end{proof}
\subsection{Consistency of divergences}\label{subsec:diverg}
It is proved in \cite{lim2019geometric} that $\delta_s(\Omega_{+}(C),D) = \delta_r(C,\Omega_{-}(D)$ holds if we take $\delta_r$ (resp. $\delta_s$) to be the geodesic distance $\lVert \log (X^{-1}Y) \rVert$ for $X,Y\in \Herm_r^+$ (resp. $X,Y\in \Herm_s^+$). Theorem~\ref{thm:invariant distance} implies that such a geodesic distance is unique (up to a positive scaling) if it is congruence invariant. However, a measurement of the nearness of two positive definite matrices is not necessarily a distance function. In the literature, such a measurement is called a divergence \cite{CCA15,
KSD09,
Stein64,
Sra12,
CZA06}. In the next theorem, we prove that most commonly-used divergences satisfy the relation $\delta(C, \Omega_{-}(D)) = \delta(D, \Omega_+(C))$ for all $(C,D) \in \Herm_r^{+} \times \Herm_s^{+}$.
\begin{theorem}[consistency of divergences]\label{thm:divergence}
Let $r \le s$ be positive integers  and let 
\[
\delta_r: \Herm_r^{++} \times \Herm_r^{++} \to \mathbb{R}, \quad \delta_s: \Herm_s^{++} \times \Herm_s^{++} \to \mathbb{R}
\]
be divergences of one of the following types: 
\begin{enumerate}[(a)]
\item $\alpha\beta$ log-det divergence \cite{CCA15}: $\frac{1}{\alpha \beta} \log \det \left( 
\frac{\alpha (X^{-1}Y)^\beta + \beta (Y^{-1}X)^\alpha}{\alpha  + \beta}
\right)$, where $\alpha, \beta$ are fixed nonzero real numbers such that $ \alpha + \beta \ne 0$. \label{thm:divergence:item1}
\item Generalized Stein's loss \cite{KSD09,Stein64}: $\frac{1}{\alpha^2}\left[ \tr( (X Y^{-1})^\alpha) - \log(\det ( (X Y^{-1})^{\alpha} )) - n \right]$, where $\alpha \ne 0$ is a fixed real number and $n$ is the dimension of matrices $X$ and $Y$.\label{thm:divergence:item2}
\item Generalized Burg divergence \cite{kulis2006learning}: $\frac{1}{\alpha^2}\left[ \tr( (X^{-1} Y)^\alpha) - \log(\det ( (X^{-1} Y)^{\alpha} )) - n \right]$, where $\alpha \ne 0$ is a fixed real number \label{thm:divergence:item3} and $n$ is the dimension of matrices $X, Y$.
\item Generalized Itakura-Saito log-det divergence \cite{CCA15,CZA06}:
$\frac{1}{\alpha^2}\log \frac{\det(XY^{-1})^\alpha}{\det(I_n + \log (XY^{-1})^\alpha)}$, where $\alpha \ne 0$ is a fixed real number.\label{thm:divergence:item4}
\item Geodesic distance: $\lVert \log (X^{-1}Y) \rVert$. \label{thm:divergence:item5}
\end{enumerate}
We also denote by $f_{\operatorname{sym}}$ the symmetrization of a function $f: \Herm_m^{++} \times \Herm_m^{++} \to \mathbb{R}$, i.e.,
\[
f_{\operatorname{sym}}(X,Y) \coloneqq \frac{1}{2} \left( f(X,Y) +  f(Y,X) \right). 
\]
Then for any $(C,D) \in \Herm_r^{++} \times \Herm_{s}^{++}$, we have 
\[
\delta_r(C,\Omega_{-}(D) ) = \delta_s(\Omega_{+}(C),D), \quad (\delta_r)_{\operatorname{sym}}(C,\Omega_{-}(D) ) = (\delta_s)_{\operatorname{sym}}(\Omega_{+}(C),D).
\]
Moreover, both $\delta_r(C,\Omega_{-}(D) )$ and $(\delta_r)_{\operatorname{sym}}(C,\Omega_{-}(D) )$ can be written as explicit functions of 
\[
\max\{1, \lambda_1(C^{-1}D_{11})\}, \dots, \max\{1,\lambda_r(C^{-1}D_{11})\}.
\] 
\end{theorem}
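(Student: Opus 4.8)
The plan is to show that every divergence in the list, after passing to the eigenvalues of $X^{-1}Y$, becomes a \emph{separable} function of their logarithms, and that this separability is exactly what forces the two point-set quantities to coincide. First I would record that each $\delta_m$ has the form \eqref{eq:delta} with $f(X,Y)=\log(X^{-1}Y)$, so that $\lambda_k(f(X,Y))=\log\lambda_k(X^{-1}Y)=:t_k$, and $g(t_1,\dots,t_m)=G\big(\sum_{k=1}^m\phi(t_k)\big)$ for an increasing $G$ and a convex profile $\phi$ with $\phi\ge 0$ whose unique minimum is $\phi(0)=0$. Explicitly $\phi(t)=t^2$ with $G=\sqrt{\cdot}$ for \ref{thm:divergence:item5}; $\phi(t)=\frac{1}{\alpha^2}(e^{\alpha t}-\alpha t-1)$ with $G=\operatorname{Id}$ for \ref{thm:divergence:item3}; $\phi(t)=\frac{1}{\alpha^2}(e^{-\alpha t}+\alpha t-1)$ for \ref{thm:divergence:item2} (using $\lambda_k(XY^{-1})=\lambda_{m+1-k}(X^{-1}Y)^{-1}$ to rewrite everything through $X^{-1}Y$); $\phi(t)=\frac{1}{\alpha^2}(-\alpha t-\log(1-\alpha t))$ for \ref{thm:divergence:item4}; and $\phi(t)=\frac{1}{\alpha\beta}\log\frac{\alpha e^{\beta t}+\beta e^{-\alpha t}}{\alpha+\beta}$ for \ref{thm:divergence:item1} (the two matrices share the similarity diagonalising $X^{-1}Y$, so the determinant factors over the eigenvalues). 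In each case a one-line derivative computation shows $\phi'$ vanishes only at $t=0$ and $\phi''\ge 0$ on the relevant domain, giving convexity and the unique minimum.

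Next I would verify the hypotheses of Lemmas~\ref{lem:Delta-} and \ref{lem:Delta+}. Every $\delta_m$ depends only on the spectrum of $X^{-1}Y$, hence is congruence invariant; the map $f(C,\cdot)$ is surjective onto $\Herm_r$ and order preserving because $X\preceq X'$ gives $C^{-1/2}XC^{-1/2}\preceq C^{-1/2}X'C^{-1/2}$ and $\log$ is operator monotone; and $\lambda_j(f(I_s,Y))=\log\lambda_j(Y)=h(\lambda_j(Y))$ with $h=\log$ surjective and increasing. Lemma~\ref{lem:Delta-} then expresses $\delta_r(C,\Omega_-(D))$ as the minimum of $\sum_{k=1}^r\phi(t_k)$ over $t_1\ge\cdots\ge t_r$ subject to $t_k\ge\log\lambda_k$, where $\lambda_k:=\lambda_k(C^{-1}D_{11})$. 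Since $\phi$ is convex with its only minimum at $0$, each coordinate is optimised independently at $t_k=\max\{0,\log\lambda_k\}=\log\max\{1,\lambda_k\}$, and because $\lambda_1\ge\cdots\ge\lambda_r$ these values are automatically ordered, so
\[
\delta_r(C,\Omega_-(D))=G\Big(\sum_{k=1}^r\phi\big(\log\max\{1,\lambda_k\}\big)\Big),
\]
which is the asserted explicit function of $\max\{1,\lambda_k(C^{-1}D_{11})\}$.

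For the lift side I would feed the same data into Lemma~\ref{lem:Delta+} and follow the reindexing used in the proof of Theorem~\ref{thm:invariant distance}. After the congruence reduction of Lemma~\ref{lem:Delta+} the varying argument sits in the first slot, so one meets $\lambda_k(W^{-1})$, i.e. the \emph{reversed} profile $\phi(-\,\cdot\,)$, and simultaneously the relevant spectral quantities get inverted; substituting $\tau_{s-j+1}=-t_j$ and using permutation invariance of $\sum_k\phi(\cdot)$ turns the existential constraint ``$t_{j_k}\le h(\lambda_k(D_{11}^{-1}C))$ for some $j_1<\cdots<j_r$'' into a one-sided constraint on $r$ of the variables, the remaining $s-r$ being free. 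The decisive point is that the objective is a genuine \emph{sum} with no cross term: the $s-r$ free coordinates are set to $0$ and contribute $\phi(0)=0$, while the two sign reversals cancel and the constrained coordinates reproduce exactly $\sum_{k=1}^r\phi(\log\max\{1,\lambda_k\})$. Hence $\delta_s(\Omega_+(C),D)=\delta_r(C,\Omega_-(D))$. This is precisely where the present divergences differ from the $\beta\ne 0$ geodesic distance of Theorem~\ref{thm:invariant distance}, whose cross term $\beta(\sum_k t_k)^2$ keeps the free coordinates active and produces the strict inequality. For the symmetrised statement with $G=\operatorname{Id}$ (cases \ref{thm:divergence:item1}--\ref{thm:divergence:item4}) I would replace $\phi$ by $\phi_{\operatorname{sym}}(t)=\tfrac12(\phi(t)+\phi(-t))$, which is again convex, even and minimised only at $0$; using $\log\lambda_k(Y^{-1}X)=-\log\lambda_{m+1-k}(X^{-1}Y)$ and permutation invariance identifies $(\delta_m)_{\operatorname{sym}}$ with $\sum_k\phi_{\operatorname{sym}}(t_k)$, and the two preceding paragraphs apply verbatim; the geodesic distance \ref{thm:divergence:item5} is already symmetric.

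The main obstacle I anticipate is exactly the orientation bookkeeping that matches the two optimisations. The reference matrix $C$ occupies the first slot on the project side ($\delta_r(C,X)$) but the optimisation variable occupies the first slot on the lift side ($\delta_s(Y,D)$); the resulting passage from $\lambda_k(W)$ to $\lambda_k(W^{-1})$ introduces the sign flip $t\mapsto -t$ in the profile, and one must check that, after this flip together with the reindexing, the clipped value $\log\max\{1,\lambda_k\}$ is reproduced with the correct sign and that the ordering $t_1\ge\cdots\ge t_s$ stays feasible for the chosen placement of the constrained coordinates. The remaining ingredients---congruence invariance, operator monotonicity of $\log$, and the convexity and unique-minimum verification for $\phi$ (most delicate for \ref{thm:divergence:item1} and \ref{thm:divergence:item4}, where one must also restrict to the domain $1-\alpha t>0$ and confirm $\phi\ge 0$)---are routine.
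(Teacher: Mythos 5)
Your proposal is correct and follows essentially the same route as the paper: both express each divergence as a separable sum $\sum_k g(\lambda_k(X^{-1}Y))$ of a function with a unique minimum (at $\lambda=1$, or $t=0$ after your logarithmic substitution $t=\log\lambda$), feed this into Lemmas~\ref{lem:Delta-} and \ref{lem:Delta+}, optimise coordinate-wise to get the clipped values $\max\{1,\lambda_k(C^{-1}D_{11})\}$, and set the $s-r$ free coordinates on the lift side to the minimiser so they contribute nothing. Your extra bookkeeping (convexity of $\phi$, explicit symmetrisation via $\phi_{\operatorname{sym}}$, the reindexing from Theorem~\ref{thm:invariant distance}) only makes explicit what the paper leaves implicit.
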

Before we proceed to the proof of Theorem~\ref{thm:divergence}, it is worthy to remark that divergences in \eqref{thm:divergence:item2}--\eqref{thm:divergence:item5} are limits (in an appropriate sense) of \eqref{thm:divergence:item1} as $\alpha,\beta$ or $\alpha + \beta$ approaches to zero. Therefore intuitively one may regard our conclusions for \eqref{thm:divergence:item2}--\eqref{thm:divergence:item5} as consequences of that for \eqref{thm:divergence:item1}. However, taking limits does not supply a rigorous proof since the convergence of \eqref{thm:divergence:item1} to \eqref{thm:divergence:item2}--\eqref{thm:divergence:item5} is not necessarily uniform. To avoid the discussion of the uniform convergence, we provide below a proof which deals with all cases simultaneously.
\begin{proof}
For notational simplicity, we denote both $\delta_r$ and $\delta_s$ by $\delta$ when sizes of matrices are understood from the context. We observe that in each of these cases, $\delta$ can be written as 
\[
\delta (X,Y) = \left( \sum_{j=1}^n g(\lambda_j(X^{-1}Y)) \right)^{a}
\]
for some function $g:\mathbb{R}_+ \to \mathbb{R}$ and $a\in \{1,\frac{1}{2}\}$. The minimizer $t_{\min} = 1$ of $g$ is unique. Moreover $g(1) = 0$ and 
\[
g'(t) = \begin{cases}
> 0,~\text{if}~t >1, \\
< 0,~\text{if}~t <1. 
\end{cases}
\]
In this case, Lemmas~\ref{lem:Delta-} and \ref{lem:Delta+} and the permutation invariance of $\sum_{j=1}^n g(t_j)$ imply the formulae that follow.
\begin{align*}
\delta(C,\Omega_{-}(D)) = \inf_{\substack{t_1 \ge \cdots \ge t_r \\ t_k \ge \lambda_k, 1\le k \le r}} \left( \sum_{k=1}^r g(t_k) \right)^{a}, \\
\delta(\Omega_+(C),D) = \inf_{\substack{ t_1 \ge \cdots \ge t_r \\  t_k \ge \lambda_k, 1\le k \le r}} \left( \sum_{j=1}^s g(t_j) \right)^{a}.
\end{align*}
Here we denote $\lambda_k \coloneqq \lambda_k(A^{-1}B_{11}),1 \le k \le r$. It is straightforward to verify that $\delta(C,\Omega_{-}(D)) \ge \delta(\Omega_{+}(C),D)$. We claim that 
\[
\delta(C, \Omega_{-}(D)) = \left( \sum_{k=1}^r g(\max\{1, 
\lambda_k\}) \right)^a = \delta(\Omega_{+}(C),D).
\]
In fact, we may take 
\[
(t_1, \dots, t_s) = (\max\{1, 
\lambda_1\}, \dots, \max\{1, 
\lambda_r\}, \underbrace{1,\dots, 1}_{\text{$(s-r)$ times}}).
\]
Clearly relations $t_1 \le \cdots \le t_r$ and $\lambda_k \le  t_k ,1\le k \le r$ hold. Thus 
\[
\left( \sum_{k=1}^r g(t_k) \right)^a \ge \delta(C, \Omega_{-}(D)) \ge  \delta(\Omega_{+}(C), D) \ge \min_{\substack{ t_k \ge \lambda_k \\ 1\le k \le r}} \left( \sum_{j=1}^r g(t_j) \right)^a = \left(  \sum_{j=1}^r \min_{\substack{t_k \ge \lambda_k \\ 1\le k \le r }} g(t_j) \right)^a.
\]
\end{proof}
Other than limiting cases stated in Theorem~\ref{thm:divergence} \eqref{thm:divergence:item2}--\eqref{thm:divergence:item5}, the $\alpha\beta$ log-det divergence and its symmetrization also cover other well-known divergences including: 
\begin{itemize}
\item Bhattacharyya divergence \cite{Bhattacharyya43} or S-divergence \cite{Sra12}: $\alpha = \beta =\pm 1/2$.
\item R\'{e}nyi divergence \cite{JC82} or $\alpha$ log-det divergence \cite{chebbi2012means}: $0 < \alpha < 1, \beta = 1- \alpha$.
\item $\beta$ log-det divergence \cite{cichocki2010families}: $\alpha =1, \beta > 0$.
\item Kullback-Leibler divergence \cite{SR51}: $\alpha = 1$, $\beta = 0$.
\end{itemize}
We notice that if $\delta$ is a divergence of one of the types in Theorem~\ref{thm:divergence}, then $\delta$ is unbounded. However, one can construct a bounded divergence from $\delta$ by composing it with a bounded increasing function $h: \mathbb{R}_+ \to \mathbb{R}_+$. Standard choices of $h$ include:
\begin{itemize}
\item $h(t) = \frac{t}{1 + t}$.
\item $h(t) = \min \{\varepsilon, t\}$ where $\varepsilon$ is a fixed positive number.
\end{itemize}
The corollary that follows is a direct consequence of Theorem~\ref{thm:divergence}.
\begin{corollary}
Let $h: \mathbb{R}_+ \to \mathbb{R}_+$ be an increasing function and let $\delta$ be a divergence of one of the types in Theorem~\ref{thm:divergence}. For any $(C,D) \in \Herm_r^{++} \times \Herm_s^{++}$, we have 
\[
h\circ \delta_r (C, \Omega_{-}(D)) = h \circ \delta_s(\Omega_+(C),D)
\]
\end{corollary}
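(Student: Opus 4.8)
The plan is to deduce this directly from Theorem~\ref{thm:divergence} by exploiting the fact that an increasing function commutes with the point-set (infimum) construction. First I would unravel the two sides: by definition of the point-set distance associated to the composite divergence $h\circ\delta$, one has
\[
(h\circ\delta_r)(C,\Omega_{-}(D)) = \inf_{X\in\Omega_{-}(D)} h(\delta_r(C,X)), \qquad (h\circ\delta_s)(\Omega_{+}(C),D) = \inf_{Y\in\Omega_{+}(C)} h(\delta_s(Y,D)).
\]

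The key observation is the elementary fact that for any increasing $h:\mathbb{R}_+\to\mathbb{R}_+$ and any real-valued function $f$ on a set $S$ whose infimum over $S$ is attained, one has $\inf_{x\in S} h(f(x)) = h\bigl(\inf_{x\in S} f(x)\bigr)$. Indeed, if $x_0$ minimizes $f$ then $f(x)\ge f(x_0)$ for all $x\in S$, so monotonicity of $h$ gives $h(f(x))\ge h(f(x_0))$; thus $h(f(x_0))$ is a lower bound attained by $h\circ f$, which is therefore its infimum. The hypothesis that the infimum of $f$ is attained is met here, because the proof of Theorem~\ref{thm:divergence} exhibits an explicit minimizer (namely the choice $t_k=\max\{1,\lambda_k(C^{-1}D_{11})\}$) for both $\delta_r(C,\Omega_{-}(D))$ and $\delta_s(\Omega_{+}(C),D)$.

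Applying this observation to $f(X)=\delta_r(C,X)$ on $\Omega_{-}(D)$ and to $f(Y)=\delta_s(Y,D)$ on $\Omega_{+}(C)$ yields
\[
(h\circ\delta_r)(C,\Omega_{-}(D)) = h\bigl(\delta_r(C,\Omega_{-}(D))\bigr), \qquad (h\circ\delta_s)(\Omega_{+}(C),D) = h\bigl(\delta_s(\Omega_{+}(C),D)\bigr).
\]
Finally I would invoke Theorem~\ref{thm:divergence}, which asserts $\delta_r(C,\Omega_{-}(D)) = \delta_s(\Omega_{+}(C),D)$; applying the single-variable function $h$ to both sides then produces the claimed identity.

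Since every step is an immediate consequence of either the monotonicity of $h$ or the already-established Theorem~\ref{thm:divergence}, I expect no serious obstacle. The only point demanding a little care is the commutation of $h$ with the infimum: for a merely non-decreasing $h$ (such as $h(t)=\min\{\varepsilon,t\}$) this relies on the infimum being attained, which is why I would explicitly invoke the minimizer furnished in the proof of Theorem~\ref{thm:divergence} rather than appealing to continuity of $h$, a property that is not assumed.
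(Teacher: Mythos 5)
Your proof is correct and follows essentially the route the paper intends: the paper states this corollary without proof as a direct consequence of Theorem~\ref{thm:divergence}, and your argument simply makes explicit the one point the paper leaves implicit, namely that $h$ commutes with the infimum because the minimizers of $\delta_r(C,\cdot)$ on $\Omega_{-}(D)$ and $\delta_s(\cdot,D)$ on $\Omega_{+}(C)$ are actually attained (as exhibited in the proof of Theorem~\ref{thm:divergence} and in Propositions~\ref{prop:projection} and~\ref{prop:lift}). Your remark that this attainment, rather than continuity of $h$, is what justifies the interchange is a worthwhile clarification, since $h$ is only assumed increasing.
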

\subsection{Projection and lift}\label{subsec:proj and lift}
Since $\Omega_-(D)$ (resp. $\Omega_+(C)$) is closed in $\Herm^{++}_m$ (resp. $\Herm^{++}_n$), the point-set distance $\delta_r(C, \Omega_-(D))$ (resp. $\delta_s(D, \Omega_+(C))$)  can be achieved by some point in $\Omega_-(D)$ (resp. $\Omega_+(C)$), if the function $\delta_r(C,\cdot)$ (resp. $\delta_s(D,\cdot)$) is continuous. 
\begin{proposition}[uniqueness of projection]\label{prop:projection}
Let $r \le s$ be positive integers and let 
\[
\delta_r: \Herm_r^{++} \times \Herm_r^{++} \to \mathbb{R}
\] 
be a divergence of one of the types in Theorem~\ref{thm:divergence}. For any $(C,D)\in \Herm_r^{++} \times \Herm_s^{++}$, there is a unique $D_{-} \in \Omega_{-}(D)$ such that 
\[
\lambda_k (C^{-1/2} D_{-} C^{-1/2}) = \max \{1, \lambda_k(C^{-1} D_{11}) \},\quad 1 \le k \le r.
\]
As a consequence, we have $\delta_r(C, D_{-}) = \delta_r(C, \Omega_{-}(D))$. 
\end{proposition}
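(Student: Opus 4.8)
The plan is to strip the congruence-invariant data out of the statement by a change of variables, prove existence through an explicit spectral construction, and then obtain uniqueness from an equality case in the Ky Fan maximum principle. Throughout I write $\mu_k \coloneqq \lambda_k(C^{-1}D_{11})$ and $\nu_k \coloneqq \max\{1,\mu_k\}$, and I use that $D_{11}$, being a principal submatrix of $D \succ 0$, lies in $\Herm_r^{++}$.

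First I would normalize by the congruence $X = C^{1/2} Y C^{1/2}$. Setting $M \coloneqq C^{-1/2} D_{11} C^{-1/2} \in \Herm_r^{++}$, the membership $X \in \Omega_{-}(D)$, i.e. $D_{11} \preceq X$, is equivalent to $M \preceq Y$; moreover $\lambda_k(C^{-1/2} X C^{-1/2}) = \lambda_k(Y)$, and $\lambda_k(M) = \mu_k$ since $M$ is similar to $C^{-1}D_{11}$. Thus the proposition is equivalent to the linear-algebraic claim: \emph{there is a unique $Y \in \Herm_r^{++}$ with $Y \succeq M$ and spectrum $\{\nu_1,\dots,\nu_r\}$.} For existence I would diagonalize $M = U \diag(\mu_1,\dots,\mu_r) U^{\ast}$ and put $Y_0 \coloneqq U \diag(\nu_1,\dots,\nu_r) U^{\ast}$; since $\nu_k \ge \mu_k$ in this common eigenbasis, $Y_0 \succeq M$, and $D_{-} \coloneqq C^{1/2} Y_0 C^{1/2}$ realizes the asserted eigenvalues.

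For uniqueness, let $p \coloneqq \#\{k : \mu_k \ge 1\}$, so that $\nu_k = \mu_k$ for $k \le p$ and $\nu_k = 1 > \mu_k$ for $k > p$; in particular $\mu_p \ge 1 > \mu_{p+1}$ produces a spectral gap, so the top-$p$ eigenspace $\mathcal{V}$ of $M$ is a well-defined $M$-invariant subspace. Suppose $Y \succeq M$ has spectrum $\{\nu_k\}$ and set $P \coloneqq Y - M \succeq 0$. The decisive step exploits the identity $\sum_{k=1}^{p} \lambda_k(Y) = \sum_{k=1}^{p}\nu_k = \sum_{k=1}^{p}\mu_k$: taking $W$ to be an orthonormal basis of $\mathcal{V}$, so that $\tr(W^{\ast} M W) = \sum_{k\le p}\mu_k$, the Ky Fan maximum principle gives $\sum_{k=1}^{p}\lambda_k(Y) \ge \tr(W^{\ast} Y W) = \sum_{k\le p}\mu_k + \tr(W^{\ast} P W)$, whence $\tr(W^{\ast} P W) \le 0$. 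As $P \succeq 0$ this forces $\tr(W^{\ast} P W) = \lVert P^{1/2} W \rVert_F^2 = 0$, so $P|_{\mathcal{V}} = 0$.

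Once $P$ annihilates $\mathcal{V}$ we have $Y|_{\mathcal{V}} = M|_{\mathcal{V}}$, so $Y$ preserves $\mathcal{V}$ and, being self-adjoint, preserves $\mathcal{V}^{\perp}$. The block $Y|_{\mathcal{V}}$ has spectrum $\{\mu_1,\dots,\mu_p\} = \{\nu_1,\dots,\nu_p\}$, so the complementary block must carry the remaining eigenvalues $\nu_{p+1} = \cdots = \nu_r = 1$, forcing $Y|_{\mathcal{V}^{\perp}} = \operatorname{Id}_{\mathcal{V}^{\perp}}$; hence $Y = Y_0$. The boundary cases are immediate ($p=0$: any matrix with all-ones spectrum equals $I$; $p=r$: $P=0$ and $Y=M$). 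Finally, the consequence $\delta_r(C, D_{-}) = \delta_r(C, \Omega_{-}(D))$ follows by combining the identity $\lambda_k(C^{-1}D_{-}) = \max\{1,\mu_k\}$ with the closed-form value of $\delta_r(C,\Omega_{-}(D))$ from Theorem~\ref{thm:divergence}. I expect the extraction of $P|_{\mathcal{V}} = 0$ to be the crux: it is exactly the equality case where the Ky Fan inequality degenerates, and the spectral gap $\mu_p > \mu_{p+1}$ is what makes $\mathcal{V}$ unambiguous so that this argument pins down $Y$ uniquely.
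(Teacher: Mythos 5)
Your proof is correct, and its uniqueness mechanism is genuinely different from the paper's. Both arguments begin the same way: normalize by the congruence $X\mapsto C^{-1/2}XC^{-1/2}$ and reduce the claim to the uniqueness of a matrix $Y\succeq M\coloneqq C^{-1/2}D_{11}C^{-1/2}$ with prescribed spectrum $\{\max\{1,\mu_k\}\}_{k=1}^{r}$. From there the paper writes $Y=R\Lambda R^{\ast}$ with $R\in U(r)$, groups the entries of $\Lambda$ by the multiplicities of its distinct values, and argues that any $R$ mixing distinct blocks would push a specific diagonal entry of $R\Lambda R^{\ast}$ strictly below the corresponding entry of $\Sigma$, contradicting $\Sigma\preceq R\Lambda R^{\ast}$; this pins $R$ down to a block-diagonal form commuting with $\Lambda$. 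You instead set $P=Y-M\succeq 0$, note that the sums of the top $p$ eigenvalues of $Y$ and $M$ agree (with $p=\#\{k:\mu_k\ge 1\}$), and use the equality case of the Ky Fan maximum principle to force $P$ to vanish on the top-$p$ eigenspace $\mathcal{V}$ of $M$ — which is unambiguous precisely because of the gap $\mu_p\ge 1>\mu_{p+1}$ — after which block-diagonality of $Y$ with respect to $\mathcal{V}\oplus\mathcal{V}^{\perp}$ and a multiset count of the leftover eigenvalues give $Y=f(M)$ with $f(t)=\max\{1,t\}$. Your route is coordinate-free, identifies the spectral gap at $1$ as the true source of uniqueness, and avoids the paper's somewhat delicate claim about the form of a non-block-diagonal $R$ and the strict decrease of a diagonal entry; the paper's route has the advantage of producing the explicit parametrization $D_-=C^{1/2}Q^{\ast}\Lambda QC^{1/2}$ that is reused verbatim in Corollary~\ref{cor:projection}. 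Your existence step and the deduction of $\delta_r(C,D_-)=\delta_r(C,\Omega_-(D))$ from the closed form in Theorem~\ref{thm:divergence} match the paper's.
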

\begin{proof}
Since $\lambda_k (C^{-1/2} D_{-} C^{-1/2}) = \max \{1, \lambda_k(C^{-1} D_{11}) \}, 1 \le k \le r$, we may write 
\[
D_{-}  =  C^{1/2} Q^\ast \Lambda  Q C^{1/2}, 
\]
where $\Lambda = \diag ( \max \{1, \lambda_1(C^{-1} D_{11}) \}, \dots, \max \{1, \lambda_r(C^{-1} D_{11}) \})$ and $Q\in U(r)$. We observe that $C^{-1/2} D_{11} C^{-1/2} \preceq C^{-1/2} D_{-} C^{-1/2} = Q^\ast \Lambda  Q$ if and only if $D_{11} \preceq D_{-}$. Let $P^\ast \Sigma P = C^{-1/2} D_{11} C^{-1/2}$ be an SVD of $C^{-1/2} D_{11} C^{-1/2}$. We have
\[
\Sigma \preceq (QP^\ast)^\ast \Lambda  (QP^\ast).
\]
Thus the uniqueness of $D_{-}$ follows from the uniqueness of the matrix $X = R \Lambda R^\ast$ such that $\Sigma \preceq X$, where $R\in U(r)$. Indeed, if $X$ is unique and $D'_{-}$ is another element in $\Omega_{-}(D)$ such that $\delta_r(C,D'_{-}) = \delta_r(C,\Omega_{-}(D))$, then we may write $D'_{-}  =  C^{1/2} {Q'}^\ast \Lambda  {Q'} C^{1/2}$ for some $Q' \in U(r)$. But this implies that 
\[
(QP^\ast)^\ast \Lambda  (QP^\ast) = X  = (Q'P^\ast)^\ast \Lambda  (Q'P^\ast),
\] 
thus $D_{-}  =  C^{1/2} {Q}^\ast \Lambda  {Q} C^{1/2}  =  C^{1/2} {Q'}^\ast \Lambda  {Q'} C^{1/2} = D'_{-}$.

It is left to prove the uniqueness of $X$. For convenience, we reorder elements in $\Sigma$ and $X$ so that
\[
\Lambda =\diag ( \underbrace{\mu_1,\dots, \mu_1}_{\text{$m_1$ times}},\dots, \underbrace{\mu_l,\dots, \mu_l}_{\text{$m_l$ times}} ),\quad 1\le \mu_1 < \cdots < \mu_l
\]
and $\Sigma \preceq X$ still holds. If $l = 1$ then $\Lambda = \mu_1 I_r$ and $X$ is clearly unique. If $l \ge 2$, then $\lambda_k(X) = \lambda_k(\Lambda) = \lambda_k(\Sigma)$ whenever $k \ge m_1 + 1$. We claim that $R = \diag(R_1,\dots, R_l)$ where $R_p \in U(m_p), 1\le p \le l$. Otherwise, there exists $2 \le l_0 \le l$ such that $R = \diag(R', R_{l_0+1},\dots, R_{l})$ but 
\[
R' \in U (M_{l_0}) \setminus (U(M_{l_0 - 1}) \times U(m_{l_0}) ),
\]
where $R_p \in U(m_p)$ and $M_{p} \coloneqq \sum_{s=1}^{p} m_s$ for $l_0 \le p \le l$. In this case, the $\left( M_{l_0}, M_{l_0} \right)$-th element of $X = R \Lambda R^\ast$ is stricly smaller than $\mu_{l_0}$. Since $\mu_{l_0} > 1$, $\mu_{l_0}$ is an eigenvalue of $C^{-1} D_{11}$ and it is the $\left( M_{l_0}, M_{l_0} \right)$-th element of $\Sigma$. But this contradicts to the assumption that $\Sigma \preceq X$.
\end{proof}
According to Proposition~\ref{prop:projection}, there is a well-defined map 
\[
\pi_{-}: \Herm_r^{++} \times \Herm_s^{++} \to \Herm_r^{++},\quad \pi_{-}(C,D) = D_{-},
\] 
such that the diagram in Figure~\ref{fig:unique projection} commutes. Here $\operatorname{proj}_1$ is the projection map onto the first factor and we denote a divergence on $\Herm_r^{++} \times \Herm_r^{++}$ and its induced function on $\Herm_r^{++} \times \Herm_s^{++}$
by $\delta_r$ and $\delta_r(\cdot, \Omega_{-}(\cdot))$, respectively. 
\begin{figure}[!hbt]
\centering
  \begin{tikzcd}[row sep=1.5cm, column sep=2cm]
      \Herm_r^{++} \times \Herm_s^{++}  \arrow{d}[left]{ (\operatorname{proj}_1,  \pi_{-}) } \arrow{r}{\delta_r(\cdot, \Omega_-(\cdot))}  & \mathbb{R}   \\
    \Herm_r^{++}  \times \Herm_r^{++}  \arrow{ru}[below]{\delta_r} &    
    \end{tikzcd}
  \caption{unique projection}
    \label{fig:unique projection}
\end{figure}
\begin{corollary}\label{cor:projection}
Let $r \le s$ be positive integers and let 
\[
\delta_r: \Herm_r^{++} \times \Herm_r^{++} \to \mathbb{R}
\] 
be a divergence of one of the types in Theorem~\ref{thm:divergence}. For any $(C,D)\in \Herm_r^{++} \times \Herm_s^{++}$, we have 
\begin{equation}\label{cor:projection:eq:B-}
\pi_-(C,D) = D_{-}  =  C^{1/2} Q^\ast \Lambda  Q C^{1/2}, 
\end{equation}
where $\Lambda = \diag ( \max \{1, \lambda_1(C^{-1} D_{11}) \}, \dots, \max \{1, \lambda_r(C^{-1} D_{11}) \})$ and $Q\in U(r)$ is obtained by an SVD of $C^{-1/2} D_{11} C^{-1/2} = Q \diag(\lambda_1(C^{-1} D_{11}), \dots, \lambda_r(C^{-1} D_{11})) Q^\ast$. Moreover, if $\lambda_k(C^{-1} D_{11}) \ge 1$ for all $1 \le k \le r$, then $D_{-} = D_{11}$. 
\end{corollary}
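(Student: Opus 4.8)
The plan is to derive the explicit formula directly from the existence-and-uniqueness statement of Proposition~\ref{prop:projection}, so that the corollary reduces to a verification rather than a fresh argument. Proposition~\ref{prop:projection} already provides a \emph{unique} $D_-\in\Omega_-(D)$ whose eigenvalues relative to $C$ are the clamped values $\max\{1,\lambda_k(C^{-1}D_{11})\}$, and it identifies this $D_-$ as the minimizer realizing $\delta_r(C,\Omega_-(D))$. Hence it suffices to exhibit one matrix of the stated closed form, check that it lies in $\Omega_-(D)$ and carries the prescribed eigenvalues, and then invoke uniqueness to identify it with $\pi_-(C,D)$.

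First I would record the similarity relation $\lambda_k(C^{-1}D_{11}) = \lambda_k(C^{-1/2}D_{11}C^{-1/2})$, so that the unitary $Q$ diagonalizing $C^{-1/2}D_{11}C^{-1/2}$ simultaneously exposes the numbers $\lambda_k \coloneqq \lambda_k(C^{-1}D_{11})$. Writing $\Sigma \coloneqq \diag(\lambda_1,\dots,\lambda_r)$ and $\Lambda \coloneqq \diag(\max\{1,\lambda_1\},\dots,\max\{1,\lambda_r\})$, the candidate $D_-$ is obtained from $C^{-1/2}D_{11}C^{-1/2}$ by \emph{clamping} every eigenvalue below $1$ up to $1$ while keeping the same eigenvectors, and then conjugating back by $C^{1/2}$; this is precisely the closed form in the statement. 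Since $C^{-1/2}D_-C^{-1/2}$ is unitarily similar to $\Lambda$, its eigenvalues are exactly $\max\{1,\lambda_k\}$, which matches the eigenvalue condition of Proposition~\ref{prop:projection}.

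The step I expect to require the most care, and essentially the only substantive one, is the membership $D_-\in\Omega_-(D)$, i.e.\ the Loewner inequality $D_{11}\preceq D_-$. Conjugating by $C^{-1/2}$ (which preserves the order) reduces this to $C^{-1/2}D_{11}C^{-1/2} \preceq C^{-1/2}D_-C^{-1/2}$. Because both sides are diagonalized by the \emph{same} unitary $Q$, the comparison collapses to the coordinatewise inequalities $\lambda_k \le \max\{1,\lambda_k\}$ for $1\le k\le r$, which are immediate. I would stress that using a common eigenbasis is exactly what makes the order relation automatic: this is the phenomenon isolated in the uniqueness argument of Proposition~\ref{prop:projection}, where a misaligned unitary would in general break $\Sigma \preceq X$.

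With both defining properties verified, the uniqueness in Proposition~\ref{prop:projection} forces $D_- = \pi_-(C,D)$, establishing the explicit formula \eqref{cor:projection:eq:B-}. For the final assertion, if $\lambda_k(C^{-1}D_{11}) \ge 1$ for all $k$, then $\max\{1,\lambda_k\} = \lambda_k$, so $\Lambda = \Sigma$ and the clamping does nothing; hence $C^{-1/2}D_-C^{-1/2} = C^{-1/2}D_{11}C^{-1/2}$ and conjugating back gives $D_- = D_{11}$. I do not anticipate any genuine obstacle: the whole statement is a bookkeeping consequence of Proposition~\ref{prop:projection}, and the one point to watch is that the eigenvalue clamping must be performed in the eigenbasis of $C^{-1/2}D_{11}C^{-1/2}$, so that the Loewner inequality and the eigenvalue prescription hold simultaneously.
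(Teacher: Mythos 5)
Your proposal is correct and follows essentially the same route as the paper: the corollary carries no separate proof there because the formula is read off directly from the construction inside the proof of Proposition~\ref{prop:projection}, exactly as you do --- exhibit $C^{1/2}Q^\ast\Lambda Q C^{1/2}$, check the eigenvalue prescription and the Loewner inequality $D_{11}\preceq D_-$ in the common eigenbasis of $C^{-1/2}D_{11}C^{-1/2}$, and invoke uniqueness. Your emphasis on performing the clamping in that eigenbasis is the right reading of the statement (the paper's displayed SVD convention for $Q$ is internally inconsistent with the formula for $D_-$, but your interpretation is the intended one).
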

\begin{lemma}\label{lem:Z}
Let $r \le s$ be positive integers. Given any $(C,D)\in \Herm_r^{++} \times \Herm_s^{++}$, an invertible matrix $Z\in \GL_s(\mathbb{C})$ satisfies conditions 
\begin{itemize}
\item $Z D Z^{\ast} = I_s$,
\item $Z \Omega_{+} (C) Z^\ast = \Omega_+(\Sigma)$ where $\Sigma = \diag( \lambda_{1}(D_{11}^{-1}C), \dots, \lambda_{r}(D_{11}^{-1}C))$,
\end{itemize}
if and only if $Z$ is of the form:
\[
Z = \begin{bmatrix}
P D_{11}^{-1/2} & 0 \\
-Q W D_{12}^\ast D_{11}^{-1} & Q W
\end{bmatrix} = \begin{bmatrix}
P & 0 \\
0 & QW
\end{bmatrix} \begin{bmatrix}
D_{11}^{-1/2} & 0 \\
-D_{12}^\ast D_{11}^{-1} & I_{s-r}
\end{bmatrix}.
\]
Here $Q\in U(r - s), W = (D_{22} - D_{12}^\ast D_{11}^{-1} D_{12})^{-1/2}$, $P \in U(r)$ such that $P^{\ast}  \Sigma P = D_{11}^{-1/2} C D_{11}^{-1/2}$ is an SVD.
\end{lemma}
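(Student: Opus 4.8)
The plan is to prove both implications by reducing the two stated conditions to a transparent block normal form. Throughout write $S = D_{22} - D_{12}^\ast D_{11}^{-1} D_{12}$ for the Schur complement of $D_{11}$ in $D$, so that $W = S^{-1/2}$ is well defined because $D \succ 0$ forces $S \succ 0$, and set $Z_0 = \begin{bmatrix} D_{11}^{-1/2} & 0 \\ -D_{12}^\ast D_{11}^{-1} & I_{s-r} \end{bmatrix}$, so the candidate is $Z = \diag(P, QW)\,Z_0$. For the ``if'' direction I would verify the first condition by a direct block computation: congruence by $Z_0$ realizes the block $LDL^\ast$-factorization of $D$, giving $Z_0 D Z_0^\ast = \diag(I_r, S)$, after which conjugating by $\diag(P, QW)$ and using $P \in U(r)$, $Q \in U(s-r)$ and $WSW = I_{s-r}$ yields $Z D Z^\ast = I_s$. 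Note this step uses nothing about the SVD relation defining $P$.

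For the second condition I would pass to ellipsoids: for invertible $Z$ one has $E_{ZYZ^\ast} = (Z^{-1})^\ast E_Y$, so applying the bijection $(Z^{-1})^\ast$ to the inclusion $E_C \subseteq E_Y$ defining $\Omega_+(C)$ shows $ZYZ^\ast \in \Omega_+(\Sigma)$ for all $Y \in \Omega_+(C)$ as soon as $(Z^{-1})^\ast E_C = E_\Sigma$, where $E_C$ is embedded in $\mathcal{k}^s$ through its first $r$ coordinates. Inverting the block factorization gives $(Z^{-1})^\ast = \begin{bmatrix} P D_{11}^{1/2} & P D_{11}^{-1/2} D_{12} \\ 0 & Q S^{1/2} \end{bmatrix}$, so $(Z^{-1})^\ast$ sends $(x',0)$ to $(P D_{11}^{1/2} x', 0)$; substituting $z' = P D_{11}^{1/2} x'$ and invoking the defining relation $P^\ast \Sigma P = D_{11}^{-1/2} C D_{11}^{-1/2}$ turns $x'^\ast C x' \le 1$ into $z'^\ast \Sigma z' \le 1$, i.e. $(Z^{-1})^\ast E_C = E_\Sigma$. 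This establishes $Z \Omega_+(C) Z^\ast = \Omega_+(\Sigma)$, and only here is the SVD relation used.

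For the ``only if'' direction I would first record that the solutions of the single equation $Z D Z^\ast = I_s$ form one left $U(s)$-coset: any two solutions $Z_1, Z_2$ satisfy $Z_i^\ast Z_i = D^{-1}$, whence $Z_1 Z_2^{-1}$ is unitary. Fixing one matrix $Z^{(0)}$ of the stated form (a choice of $P_0, Q_0$), every admissible $Z$ is thus $Z = V Z^{(0)}$ for a unique $V \in U(s)$, and since $Z^{(0)}$ already satisfies both conditions, the two conditions on $Z$ collapse to the single requirement $V \Omega_+(\Sigma) V^\ast = \Omega_+(\Sigma)$. The crux, which I expect to be the main obstacle, is to determine this stabilizer, and I would do so intrinsically via the recession cone of the convex set $\Omega_+(\Sigma) = \{Y \succ 0 : Y_{11} \preceq \Sigma\}$: a direction $N$ is recessive precisely when $N \succeq 0$ and $N_{11} = 0$, equivalently $N = \diag(0, N_{22})$ with $N_{22} \succeq 0$, and the span of the ranges of these $N$ is $\{0\} \times \mathcal{k}^{s-r}$. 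Since congruence by $V$ carries the recession cone of $\Omega_+(\Sigma)$ onto that of $V\Omega_+(\Sigma)V^\ast$, invariance forces $V(\{0\} \times \mathcal{k}^{s-r}) = \{0\} \times \mathcal{k}^{s-r}$, and unitarity then makes $V = \diag(V_1, V_2)$ with $V_1 \in U(r)$, $V_2 \in U(s-r)$. For such block-diagonal $V$ a direct check gives $V \Omega_+(\Sigma) V^\ast = \Omega_+(V_1 \Sigma V_1^\ast)$, and since $\Omega_+$ is injective in its argument (the Loewner supremum of the $(1,1)$-blocks of its elements equals that argument), invariance is equivalent to $V_1 \Sigma V_1^\ast = \Sigma$, i.e. $V_1$ commutes with $\Sigma$, with $V_2$ unconstrained.

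Finally I would translate back. Writing $Z = V Z^{(0)} = \diag(V_1 P_0,\, V_2 Q_0 W)\,Z_0$ and setting $P = V_1 P_0$, $Q = V_2 Q_0$, the commutation $V_1 \Sigma V_1^\ast = \Sigma$ gives $P^\ast \Sigma P = P_0^\ast \Sigma P_0 = D_{11}^{-1/2} C D_{11}^{-1/2}$, so $P$ is again a valid SVD factor while $Q \in U(s-r)$ is arbitrary; conversely each such pair $(P,Q)$ arises from an admissible $V$ in the stabilizer. Hence the matrices $Z$ satisfying both conditions are exactly those of the displayed form. (The exponent in ``$Q \in U(r-s)$'' should read $U(s-r)$, consistent with the size of $W$.)
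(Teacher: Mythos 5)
Your proof is correct, and for the ``only if'' direction it takes a genuinely different route from the paper's. The paper works directly on the block entries of $Z$: it first forces $Z_{12}=0$ by feeding $\Omega_+(C)$ elements of the form $\begin{bmatrix} K & 0\\ 0 & cI_{s-r}\end{bmatrix}$ with $c\to\infty$, then extracts $Z_{11}=PD_{11}^{-1/2}$ from the pair of congruences $Z_{11}CZ_{11}^\ast=\Sigma$ and $Z_{11}D_{11}Z_{11}^\ast=I_r$ (via an auxiliary unitary $\Sigma^{-1/2}Z_{11}D_{11}^{1/2}V\Sigma^{1/2}$), and finally reads off $Z_{21}$ and $Z_{22}$ from the remaining blocks of $ZDZ^\ast=I_s$. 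You instead observe that the solutions of $ZDZ^\ast=I_s$ form a single left coset $U(s)Z^{(0)}$ of one explicit solution, which collapses the problem to computing the stabilizer $\{V\in U(s): V\Omega_+(\Sigma)V^\ast=\Omega_+(\Sigma)\}$, and you determine that stabilizer intrinsically through the recession cone of the convex set $\Omega_+(\Sigma)$. Your recession-cone step is in essence an invariant repackaging of the paper's $c\to\infty$ trick (both exploit that the lower-right block can grow without bound), but the coset reduction buys two real advantages: it makes the ``conversely, every such pair $(P,Q)$ arises'' half of the equivalence transparent --- the admissible $V_1$ are exactly the unitaries commuting with $\Sigma$, which is precisely the ambiguity in the SVD factor $P$ --- and it cleanly separates the roles of the two hypotheses. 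You also supply the ``if'' direction (via the $LDL^\ast$ factorization and the ellipsoid identity $E_{ZYZ^\ast}=(Z^{-1})^\ast E_Y$), which the paper leaves implicit, and you correctly note that ``$Q\in U(r-s)$'' in the statement is a typo for $U(s-r)$. One small point worth making explicit if you write this up: when you invoke the injectivity of $\Sigma\mapsto\Omega_+(\Sigma)$, the cleanest justification is that the Loewner least upper bound of $\{Y_{11}: Y\in\Omega_+(\Sigma)\}=\{X\succ 0: X\preceq\Sigma\}$ is $\Sigma$ itself, which is the observation you sketch.
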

\begin{proof}
We denote $\lambda_k \coloneqq \lambda_k(D_{11}^{-1} C), 1\le k \le r$. Let $Z$ be an invertible matrix satisfying the two conditions. We partition $Z$ as 
\[
Z = \begin{bmatrix}
Z_{11} & Z_{12} \\
Z_{21} & Z_{22}
\end{bmatrix}.
\] 
We first claim that $Z_{12} = 0$. Indeed, for any $Y\in \Omega_+(C)$, the upper left $r \times r$ submatrix of $ZYZ^\ast$ is 
\begin{equation}\label{lem:Z:Z12}
Z_{11} Y_{11} Z_{11}^\ast + Z_{12} Y_{22} Z_{12}^\ast + Z_{12} Y_{21} Z_{11}^\ast + Z_{11} Y_{12} Z_{12}^\ast \preceq \Sigma.
\end{equation}
We observe that for any $K\in \Herm_r^+$ such that $K \preceq C$, $Y = \begin{bmatrix}
K & 0 \\
0 & c I_{s - r}
\end{bmatrix} \in \Omega_+(C)$ for any $c > 0$. 

Thus \eqref{lem:Z:Z12} becomes 
\[
Z_{11} K Z_{11}^\ast + c Z_{12} Z_{12}^\ast \preceq \Sigma,
\]
which forces $Z_{12}Z_{12}^\ast= 0$ and $Z_{12} = 0$ since $c$ can be arbitrarily large. Therefore, we must have 
\[
Z_{11} C Z_{11}^\ast = \Sigma,\quad Z D Z^\ast = I_s.
\]
We observe that 
\[
\Sigma = Z_{11} C Z_{11}^\ast = Z_{11} D_{11}^{1/2} (D_{11}^{-1/2} C D_{11}^{-1/2})  D_{11}^{1/2}  Z^\ast_{11}= (Z_{11} D_{11}^{1/2} V) \Sigma (Z_{11} D_{11}^{1/2} V)^\ast,
\]
where $D_{11}^{-1/2} C D_{11}^{-1/2} = V \Sigma V^\ast$ is the SVD of $D_{11}^{-1/2} C D_{11}^{-1/2}$. Next we claim that 
\[
\Sigma^{-1/2} Z_{11} D_{11}^{1/2} V \Sigma^{1/2} \in U(r),
\] 
from which we may conclude that 
\[
Z_{11} = P D_{11}^{-1/2},
\]
where $P = \Sigma^{1/2} S \Sigma^{-1/2}   V^\ast $ for some $S\in U(r)$. To prove the claim, we denote $X \coloneqq Z_{11} D_{11}^{1/2} V$. By the relation $\Sigma = X \Sigma X^\ast$ we have 
\[
I_r = \left( \Sigma^{-1/2} X \Sigma^{1/2} \right) \left( \Sigma^{-1/2} X \Sigma^{1/2} \right)^\ast.
\]
This implies that $\Sigma^{-1/2} X \Sigma^{1/2}  \in U(r)$. As a consequence we can write $X =\Sigma^{1/2}   S \Sigma^{-1/2}$ for some $S \in U(r)$. Since $ZDZ^\ast = I_s$, we also have $Z_{11} D_{11} Z_{11}^\ast = I_r$. Thus $P = \Sigma^{1/2} S \Sigma^{-1/2}   V^\ast  \in U(r)$ where $S = \diag(R_{1},\dots, R_{l}) \in U(r)$ and sizes of $R_1,\dots, R_{l}$ are multiplicities of $\lambda_1,\dots, \lambda_r$. Therefore, we may write $P =\diag(R_{1},\dots, R_{l}) V^\ast$.

Next we notice that the lower left $(s - r) \times r$ submatrix of $Z D Z^\ast$ is 
\[
(Z_{21} D_{11} + Z_{22} D_{12}^\ast ) D_{11}^{-1/2} P^\ast = 0.
\]
Therefore $Z_{21} = -Z_{22} D_{12}^\ast D_{11}^{-1}$. Since the lower right $(s - r) \times (s - r)$ submatrix of $Z D Z^\ast$ is 
\[
(Z_{21} D_{12} + Z_{22} D_{22}) Z_{22}^\ast = I_{s - r},
\]
we obtain 
\[
Z_{22}^\ast Z_{22} = (D_{22} - D_{12}^\ast D_{11}^{-1} D_{12} )^{-1}.
\]

Lastly we claim that $Z_{22} =Q W$ where $Q\in U(s-r)$ and $W = (D_{22} - D_{12}^\ast D_{11}^{-1} D_{12} )^{-1/2}$ and this completes the proof. In fact, the claim can be proved by observing 
\[
(Z_{22}W^{-1})^\ast  (Z_{22}W^{-1}) = W^{-1} Z_{22}^\ast Z_{22} W^{-1} = I_{n-m}.
\]
\end{proof}
\begin{proposition}[uniqueness of lift]\label{prop:lift}
Let $r \le s$ be positive integers  and let 
\[
\delta_s: \Herm_s^{++} \times \Herm_s^{++} \to \mathbb{R}
\]
be the divergence of one of the types in Theorem~\ref{thm:divergence}. For any $(C,D)\in \Herm_r^{++} \times \Herm_s^{++}$, there is a unique $C_+\in \Omega_+(C)$ such that 
$Z D Z^\ast = I_s$ and $Z C_+ Z^\ast = \Lambda$ for some matrix $Z$, where
\[
\Lambda = \diag(\min\{1,\lambda_{1}(D_{11}^{-1} C)\}, \dots, \min\{1,\lambda_{m}(D_{11}^{-1} C)\}, 1, \dots, 1).
\]
In particular, we have $\delta_s(C_+, D) = \delta_s(\Omega_+(C),D)$.
\end{proposition}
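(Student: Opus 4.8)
The plan is to normalize the problem via Lemma~\ref{lem:Z}, congruence invariance, and then split the statement into existence, the minimizing property, and uniqueness, the last being the real content. First I would fix, using Lemma~\ref{lem:Z}, a matrix $Z_0$ with $Z_0 D Z_0^\ast = I_s$ and $Z_0\Omega_+(C)Z_0^\ast=\Omega_+(\Sigma)$, where $\Sigma=\diag(\mu_1,\dots,\mu_r)$ and $\mu_k=\lambda_k(D_{11}^{-1}C)$. Since every $Z$ with $ZDZ^\ast=I_s$ has the form $UD^{-1/2}$, $U\in U(s)$, the condition defining $C_+$ (the existence of such a $Z$ with $ZC_+Z^\ast=\Lambda$) is equivalent to asking that $D^{-1}C_+$ have the same eigenvalues as $\Lambda$, and through the reference $Z_0$ this reads $\lambda_j(Y')=\lambda_j(\Lambda)$ for all $j$, where $Y':=Z_0C_+Z_0^\ast$. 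Congruence invariance gives $\delta_s(C_+,D)=\delta_s(\Lambda,I_s)$. Thus the whole proposition reduces to the normalized claim: $\Lambda$ is the unique element of $\Omega_+(\Sigma)$ whose spectrum equals that of $\Lambda$, and it minimizes $\delta_s(\cdot,I_s)$ over $\Omega_+(\Sigma)$.

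Existence and the minimizing property are quick. One checks $\Lambda_{11}=\diag(\min\{1,\mu_k\})\preceq\Sigma$, so $\Lambda\in\Omega_+(\Sigma)$ and $C_+:=Z_0^{-1}\Lambda Z_0^{-\ast}\in\Omega_+(C)$ exists. For the minimizing property I would write each divergence, exactly as in the proof of Theorem~\ref{thm:divergence}, in the form $\delta_s(Y,I_s)=\bigl(\sum_j \tilde g(\lambda_j(Y))\bigr)^a$ with $\tilde g(t)=g(1/t)$, which inherits from $g$ a unique minimum $0$ at $t=1$, decreasing on $(0,1)$ and increasing on $(1,\infty)$. Cauchy interlacing applied to $Y'_{11}\preceq\Sigma$ yields $\lambda_{k+s-r}(Y')\le\lambda_k(Y'_{11})\le\mu_k$ for $1\le k\le r$, and monotonicity of $\tilde g$ on each side of $1$ then gives $\delta_s(Y',I_s)\ge\bigl(\sum_{k}\tilde g(\min\{1,\mu_k\})\bigr)^a=\delta_s(\Lambda,I_s)$, matching the value computed in Theorem~\ref{thm:divergence}.

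For uniqueness I would argue in two stages. Every eigenvalue of $\Lambda$ is $\le1$, so the spectrum condition gives $Y'\preceq I_s$ and $N:=I_s-Y'\succeq0$ with $\tr N=\sum_{\mu_k<1}(1-\mu_k)$. Combining $Y'_{11}\preceq\Sigma$ with $Y'_{11}\preceq I_r$ forces $\lambda_k(Y'_{11})\le\min\{1,\mu_k\}$, hence $\tr N_{11}=r-\tr Y'_{11}\ge\sum_k(1-\min\{1,\mu_k\})=\tr N\ge\tr N_{11}$. Equality throughout forces $N_{22}=0$, and positivity of $N$ then forces $N_{12}=0$, so $Y'=\diag(Y'_{11},I_{s-r})$; cancelling the $s-r$ trailing $1$'s from the spectrum gives $\lambda_k(Y'_{11})=\min\{1,\mu_k\}$. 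In the second stage I would peel off, from the bottom, the eigenvalues of $Y'_{11}$ that are $<1$: if $\mu_r<1$ then $\min\{1,\mu_r\}=\mu_r=\lambda_{\min}(Y'_{11})$ equals the diagonal bound $\Sigma_{rr}=\mu_r$ coming from $Y'_{11}\preceq\Sigma$, so $\mu_r\le(Y'_{11})_{rr}\le\mu_r$ forces $e_r$ to be a minimal eigenvector and splits off a coordinate block; restricting to $\operatorname{span}(e_1,\dots,e_{r-1})$ and iterating handles every index with $\mu_k<1$, while the remaining block, having all eigenvalues equal to $1$, must be an identity block. This diagonalizes $Y'_{11}$ as $\diag(\min\{1,\mu_k\})=\Lambda_{11}$, whence $Y'=\Lambda$ and $C_+$ is unique.

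The main obstacle is this uniqueness step, which is the exact dual of the uniqueness argument in Proposition~\ref{prop:projection}: block-diagonality must be extracted from a chain of trace inequalities together with the positivity of $N$, and the final diagonalization from an inductive eigenvector-pinning argument. Both require care when the $\mu_k$ have repeated values or cluster at $1$, and one must track the degenerate regimes ($s=r$, or all $\mu_k\ge1$) in which parts of the argument become vacuous.
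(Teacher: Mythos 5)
Your proof is correct, and while it opens with the same normalization as the paper --- using Lemma~\ref{lem:Z} and congruence invariance to reduce everything to the pair $(I_s,\Sigma)$ --- the core of the argument is genuinely different. The paper writes $\Sigma_+=R\Lambda R^\ast$ with $R\in U(s)$, asserts that $R$ must be block diagonal conformal with the eigenspaces of $\Lambda$ by ``the same argument as in the proof of Proposition~\ref{prop:projection}'', and gets the minimizing property from Lemma~\ref{lem:Delta+}. You instead prove the minimizing property directly from Cauchy interlacing and the monotonicity of $\tilde g(t)=g(1/t)$ on either side of $t=1$, and you establish uniqueness in two self-contained stages: the trace identity for $N=I_s-Y'$ forces $\tr N_{22}=0$, hence $N_{22}=0$, $N_{12}=0$, and $Y'=\diag(Y'_{11},I_{s-r})$ with $\lambda_k(Y'_{11})=\min\{1,\mu_k\}$; then the squeeze $\mu_r\le (Y'_{11})_{rr}\le \Sigma_{rr}=\mu_r$ pins $e_r$ as a minimal eigenvector and the induction diagonalizes $Y'_{11}$. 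What your route buys is twofold: it does not rely on transferring the diagonal-entry argument of Proposition~\ref{prop:projection}, whose defining inequality points the opposite way in the lift setting and whose adaptation the paper leaves implicit; and it treats the hypothesis on $C_+$ literally (any $Z$ with $ZDZ^\ast=I_s$, i.e.\ a spectrum condition on $D^{-1}C_+$), which is a larger candidate set than the restricted $Z$ of Lemma~\ref{lem:Z} that the paper's proof actually works with, so your uniqueness statement is formally a bit stronger. The cost is length; both proofs ultimately exploit the same phenomenon, namely that equality between a diagonal entry and an extremal eigenvalue forces an invariant coordinate subspace.
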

\begin{proof}
Let $Z$ be an invertible matrix such that $Z D Z^\ast = I_s$ and $Z \Omega_+(C) Z^\ast = \Omega(\Sigma)$ where $\Sigma = \diag(\lambda_1(D_{11}^{-1}C),\dots, \lambda_r(D_{11}^{-1}C))$. By Lemma~\ref{lem:Z}, we conclude that $Z$ is of the form:
\begin{equation}\label{prop:lift:eq1}
Z = \begin{bmatrix}
P D_{11}^{-1/2} & 0 \\
-Q W D_{12}^\ast D_{11}^{-1} & Q W
\end{bmatrix} = \begin{bmatrix}
P & 0 \\
0 & QW
\end{bmatrix} \begin{bmatrix}
D_{11}^{-1/2} & 0 \\
-D_{12}^\ast D_{11}^{-1} & I_{s-r}
\end{bmatrix},
\end{equation}
where $Q\in U(s - r), W = (D_{22} - D_{12}^\ast D_{11}^{-1} D_{12})^{-1/2}$ and $P \in U(r)$ such that 
\[
P^{\ast}  \Sigma P = D_{11}^{-1/2} C D_{11}^{-1/2}
\] 
is an SVD. Therefore, $\delta_s(D,Y) = \delta_s(D, \Omega_+(C))$ if and only if $\delta_s(I_s, Z Y Z^\ast) = \delta_s(I_s, \Omega_+(\Sigma))$. It is sufficient to prove that $\Sigma_+$ is unique for the pair $(I_s,\Sigma)$ and $Z^{-1} \Sigma_+ (Z^{-1})^\ast$ is constant for different choices of $Z$. 
 
According to the definition of $\Sigma_+$, we have 
\[
\lambda_j(\Sigma_+)  =
\begin{cases}
\min\{1,\lambda_{k}(D_{11}^{-1} C)\},~\text{if}~1 \le j \le r, \\
1,~\text{if}~ r + 1 \le j \le s.
\end{cases}
\]
We denote by $\Lambda\in \mathbb{C}^{n\times n}$ the following diagonal matrix:
\[
\diag(\min\{1,\lambda_{1}(D_{11}^{-1} C)\}, \dots, \min\{1,\lambda_{m}(D_{11}^{-1} C)\}, 1, \dots, 1) = \diag(\underbrace{\mu_1,\dots, \mu_1}_{\text{$n_1$ times}},\dots, \underbrace{\mu_{l},\dots, \mu_{l}}_{\text{$n_{l}$ times}}),
\]
where $\mu_1 < \cdots < \mu_{l}$ and $\sum_{j=1}^{l} n_j = s$. Then $\Sigma_+ = R \Lambda R^\ast$ for some $R\in U(s)$. By the same argument as in the proof of Proposition~\ref{prop:projection}, we conclude that $R = \diag(R_1,\dots, R_l)$ where $R_j \in U(n_j), 1 \le j \le l$. Thus $\Sigma_+ = \Lambda$ is unique. 

Moreover, we notice that flexibility of $P$ and $Q$ in \eqref{prop:lift:eq1} is also from multiplicities of $\mu_1,\dots, \mu_l$. 
\begin{equation}\label{prop:lift:eq:C+}
C_+ = Z^{-1} \Sigma_+ (Z^{-1})^\ast
\end{equation}
is unique as well.
\end{proof}
According to Proposition~\ref{prop:projection}, there is a well-defined map 
\[
\pi_{+}: \Herm_r^{++} \times \Herm_s^{++} \to \Herm_s^{++},\quad \pi_{+}(C,D) = C_{+},
\] 
such that the diagram in Figure~\ref{fig:unique lift} commutes. Here $\operatorname{proj}_2$ is the projection map onto the second factor and we denote a divergence on $\Herm_s^+$ and its induced function on $\Herm_r^+ \times \Herm_s^+$
by $\delta_s$ and $\delta_s(\Omega_{+}(\cdot),\cdot)$, respectively. 
\begin{figure}[!hbt]
\centering
  \begin{tikzcd}[row sep=1.5cm, column sep=2cm]
      \Herm_r^{++} \times \Herm_s^{++}  \arrow{d}[left]{ (\pi_{+},\operatorname{proj}_2) } \arrow{r}{ \delta_s(\Omega_{+}(\cdot),\cdot)}  & \mathbb{R}   \\
    \Herm_s^{++}  \times \Herm_s^{++}  \arrow{ru}[below]{\delta_s} &    
    \end{tikzcd}
  \caption{unique projection}
    \label{fig:unique lift}
\end{figure}

\begin{corollary}
Let $r \le s$ be positive integers  and let 
\[
\delta_s: \Herm_s^{++} \times \Herm_s^{++} \to \mathbb{R}
\]
be the divergence of one of the types in Theorem~\ref{thm:divergence}. For any $(C,D)\in \Herm_r^{++} \times \Herm_s^{++}$, we have 
\[
\pi_{+}(C,D) = C_+ = \begin{bmatrix}
D_{11}^{1/2}P^\ast & 0 \\
D_{12}^\ast D_{11}^{-1/2}P^\ast &  (D_{22} - D_{12}^\ast D_{11}^{-1} D_{12})^{1/2}
\end{bmatrix} \Lambda  \begin{bmatrix}
P D_{11}^{1/2} & P D_{11}^{-1/2} D_{12}  \\
0 &  (D_{22} - D_{12}^\ast D_{11}^{-1} D_{12})^{1/2}
\end{bmatrix},
\]
where $\Lambda = \diag(\min\{1,\lambda_{1}(D_{11}^{-1} C)\}, \dots, \min\{1,\lambda_{r}(D_{11}^{-1} C)\}, 1, \dots, 1)$ and $P^\ast \Sigma P = D_{11}^{-1/2} C D_{11}^{-1/2}$ is any SVD of $D_{11}^{-1/2} C D_{11}^{-1/2}$.
\end{corollary}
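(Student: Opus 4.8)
The plan is to read off $C_+$ directly from the identity $C_+ = Z^{-1}\Lambda(Z^{-1})^\ast$ that was already established in the proof of Proposition~\ref{prop:lift} (this is \eqref{prop:lift:eq:C+} together with the conclusion $\Sigma_+ = \Lambda$), feeding in the explicit shape of $Z$ supplied by Lemma~\ref{lem:Z}. In other words, the entire corollary reduces to inverting $Z$ and multiplying out a product of block matrices; no new conceptual input is needed beyond the two preceding results.

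First I would invert $Z$ using its factored form from Lemma~\ref{lem:Z},
\[
Z = \begin{bmatrix} P & 0 \\ 0 & QW \end{bmatrix} \begin{bmatrix} D_{11}^{-1/2} & 0 \\ -D_{12}^\ast D_{11}^{-1} & I_{s-r} \end{bmatrix}.
\]
Applying the elementary block-triangular identity $\begin{bmatrix} A & 0 \\ C & I \end{bmatrix}^{-1} = \begin{bmatrix} A^{-1} & 0 \\ -CA^{-1} & I \end{bmatrix}$ with $A = D_{11}^{-1/2}$ and $C = -D_{12}^\ast D_{11}^{-1}$, so that $-CA^{-1} = D_{12}^\ast D_{11}^{-1/2}$, I obtain
\[
Z^{-1} = \begin{bmatrix} D_{11}^{1/2} & 0 \\ D_{12}^\ast D_{11}^{-1/2} & I_{s-r} \end{bmatrix} \begin{bmatrix} P^\ast & 0 \\ 0 & W^{-1} Q^\ast \end{bmatrix} = \begin{bmatrix} D_{11}^{1/2} P^\ast & 0 \\ D_{12}^\ast D_{11}^{-1/2} P^\ast & W^{-1} Q^\ast \end{bmatrix},
\]
where $W^{-1} = (D_{22} - D_{12}^\ast D_{11}^{-1} D_{12})^{1/2}$.

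Next I would substitute this into $C_+ = Z^{-1}\Lambda (Z^{-1})^\ast$. Writing $\Lambda = \diag(\Lambda_1, I_{s-r})$ with $\Lambda_1 = \diag(\min\{1,\lambda_1(D_{11}^{-1}C)\},\dots,\min\{1,\lambda_r(D_{11}^{-1}C)\})$, the unitary $Q$ enters $C_+$ only through the bottom-right block of $Z^{-1}$, and there only in the combination $(W^{-1}Q^\ast)\,I_{s-r}\,(QW^{-1}) = W^{-2} = D_{22} - D_{12}^\ast D_{11}^{-1} D_{12}$; hence it cancels by the unitarity $Q^\ast Q = I_{s-r}$. This cancellation is precisely the well-definedness of $C_+$ in the choice of $Z$ already proved in Proposition~\ref{prop:lift}, so I may set $Q = I_{s-r}$ without loss. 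With $Q = I_{s-r}$, the matrix $Z^{-1}$ above is exactly the first block factor displayed in the corollary and $(Z^{-1})^\ast$ the third, the middle factor being $\Lambda$, which is the asserted formula.

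The argument is entirely routine given Lemma~\ref{lem:Z} and Proposition~\ref{prop:lift}; the only step meriting a moment's care is the disappearance of $Q$, which is not a genuine obstacle but merely the concrete manifestation of the independence already guaranteed abstractly by Proposition~\ref{prop:lift}.
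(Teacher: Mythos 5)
Your proposal is correct and is essentially the derivation the paper intends: the corollary is read off from $C_+ = Z^{-1}\Sigma_+(Z^{-1})^\ast$ with $\Sigma_+=\Lambda$ (established in Proposition~\ref{prop:lift}) and the factored form of $Z$ from Lemma~\ref{lem:Z}, after a routine block inversion. Your explicit verification that $Q$ only survives in the combination $W^{-1}Q^\ast Q W^{-1}=W^{-2}$ is a nice concrete confirmation of the independence of the choice of $Z$ that Proposition~\ref{prop:lift} already guarantees.
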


We conclude this subsection by a remark that in general $\pi_{-}(C,D)$ and $\pi_{+}(C,D)$ depend on both $C$ and $D$, thus $\pi_{-}(C,D) \ne \pi_{-}(C',D), \pi_{+}(C,D) \ne \pi_{+}(C,D')$ if $C \ne C'$ and $D \ne D'$. We prove in Propositions~\ref{prop:projection} and \ref{prop:lift} that $D_{-}\in \Omega_{-}(D)$ and $C_{+} \in \Omega_{+}(C)$ are unique in the sense of their eigenvalues. However, one can find infinitely many elements $X\in \Omega_{-}(D)$ and $Y \in \Omega_{+}(C)$ such that 
\[
\delta_r (C, X) = \delta_r (C, \Omega_{-}(D)),\quad \delta_s (D, Y) = \delta_s (\Omega_{+}(C), D).
\]
For example, we consider 
\[
C = \begin{bmatrix}
1 & 0 \\
0 & 1
\end{bmatrix},\quad D = \begin{bmatrix}
1 & 0 & 0 \\
0 & 1/2 & 0 \\
0 & 0 & 1
\end{bmatrix},\quad \delta(X,Y) = \lVert \log (X^{-1} Y) \rVert.
\]
Then $\delta_2(C, \Omega_{-}(D)) = \delta_3(\Omega_{+}(C),D) = 1$. We let 
\[
X_{\varepsilon} = \begin{bmatrix}
2^{\varepsilon} & 0 \\
0 & 2^{\sqrt{1- \varepsilon^2}}
\end{bmatrix}, \quad \varepsilon \in [0, \sqrt{2}/2].
\]
It is straightforward to verify that $X_{\varepsilon}  \in \Omega_{-}(D)$ and $\delta_2(C, X_{\varepsilon}) = 1$ for each $\varepsilon \in [0, \sqrt{2}/2]$. Similarly, if we let 
\[
Y_{\varepsilon} = \begin{bmatrix}
2^{-\sqrt{-2\varepsilon - \varepsilon^2}} & 0 & 0 \\
0 & 2^{\varepsilon} & 0  \\
0 & 0 & 1
\end{bmatrix},\quad \varepsilon \in [-1,0],
\]
then $Y_{\varepsilon} \in \Omega_+(C)$ and $\delta_3(D, Y_{\varepsilon}) = 1$ for each $\varepsilon \in [-1,0]$
\section{Explicit formulae for \texorpdfstring{$\operatorname{GD}_{d,\delta}$}{}}\label{sec:explicit formulae}
In this section, we consider the calculation of $\operatorname{GD}_{d,\delta}$ with respect to commonly used $d$ and $\delta$. To be more precise, we let $d: \Gr(r,n) \times Gr(s,n)\to \mathbb{R}$ be the function \cite{ye2016schubert} induced by the Asimov distance \cite{DD16}, the Binet-Cauchy distance \cite{WS03}, the chordal distance \cite{JRN96}, the Fubini-Study distance \cite{Fubini1904,Study1905}, the Martin distance \cite{DD16}, the procrustes distance \cite{Yasuko12}, the spectral distance \cite{BN02}, the projection distance \cite{HL08} and the geodesic distance \cite{Wong67}. Moreover, we let $\delta: \Herm^{++}_r \times \Herm^{++}_s \to \mathbb{R}$ be the function induced by one of the divergences in Theorem~\ref{thm:divergence}. 

In Algorithm~\ref{alg:GD}, we present an algorithm to compute $\operatorname{GD}_{d,\delta}$. We remark that Lemma~\ref{lem:stratification} implies that a generic element in $\Herm^+_{n,r} \times \Herm^{+}_{m,s}$ must lie in $K_0$. Thus according to Corollary~\ref{cor:simpify delta_H}, we are able to directly compute $\operatorname{GD}_{d,\delta}(A,B)$ by formulae presented in Tables~\ref{tab:explicit formulae 1}--\ref{tab:explicit formulae 4} for almost any pair $(A,B)\in \operatorname{GD}_{d,\delta}(A,B)$.

\scriptsize{
\begin{table}[htb!]
\centering
\begin{tabular}{|c|c|c|}
\hline
 & $\alpha\beta$ log-det ($\alpha \ne 0, \beta \ne 0, \alpha + \beta \ne 0$) & Stein's loss ($\alpha \ne 0$)   \\
\hline
Asimov    & $\sqrt{\theta_r^2 +  \frac{\left[  \sum\limits_{i=1}^r 
\log \left( \frac{\alpha \lambda^\beta_i + \beta \lambda_i^{-\alpha}}{\alpha + \beta} \right) \right]^2}{\alpha^2 \beta^2 } }$  & $\sqrt{\theta_r^2 +  \frac{\left[ \sum\limits_{i=1}^r 
\left( \lambda_i^{-\alpha} + \log \lambda_i^{\alpha } \right) - r 
 \right]^2}{\alpha^4} }$ \\ \hline

Binet-Cauchy    & $\sqrt{ 1 - \prod\limits_{i=1}^r \cos^2 \theta_i +  \frac{\left[ \sum\limits_{i=1}^r 
\log \left( \frac{\alpha \lambda^\beta_i + \beta \lambda_i^{-\alpha}}{\alpha + \beta} \right) \right]^2}{\alpha^2 \beta^2} }$  & $\sqrt{1 - \prod\limits_{i=1}^r \cos^2 \theta_i +  \frac{\left[ \sum\limits_{i=1}^r 
\left( \lambda_i^{-\alpha} + \log \lambda_i^{\alpha } \right) - r 
 \right]^2}{\alpha^4} }$  \\ \hline

chordal    & $\sqrt{ \sum\limits_{i=1}^r \sin^2 \theta_i +  \frac{\left[ \sum\limits_{i=1}^r 
\log \left( \frac{\alpha \lambda^\beta_i + \beta \lambda_i^{-\alpha}}{\alpha + \beta} \right) \right]^2}{\alpha^2 \beta^2} }$  & $\sqrt{\sum\limits_{i=1}^r \sin^2 \theta_i +  \frac{\left[ \sum\limits_{i=1}^r 
\left( \lambda_i^{-\alpha} + \log \lambda_i^{\alpha } \right) - r 
 \right]^2}{\alpha^4} }$  \\ \hline

Fubini-Study    & $\sqrt{ \arccos^2 \left( \prod\limits_{i=1}^r \cos \theta_i \right)
 +  \frac{\left[ \sum\limits_{i=1}^r 
\log \left( \frac{\alpha \lambda^\beta_i + \beta \lambda_i^{-\alpha}}{\alpha + \beta} \right) \right]^2}{\alpha^2 \beta^2} }$  & $\sqrt{ \arccos^2 \left( \prod\limits_{i=1}^r \cos \theta_i \right) +  \frac{\left[ \sum\limits_{i=1}^r 
\left( \lambda_i^{-\alpha} + \log \lambda_i^{\alpha } \right) - r 
 \right]^2}{\alpha^4} }$  \\ \hline
 
Martin    & $\sqrt{ 2 \sum\limits_{i=1}^r \log \cos \theta_i
 +  \frac{\left[ \sum\limits_{i=1}^r 
\log \left( \frac{\alpha \lambda^\beta_i + \beta \lambda_i^{-\alpha}}{\alpha + \beta} \right) \right]^2}{\alpha^2 \beta^2} }$  & $\sqrt{ 2 \sum\limits_{i=1}^r \log \cos \theta_i +  \frac{\left[ \sum\limits_{i=1}^r 
\left( \lambda_i^{-\alpha} + \log \lambda_i^{\alpha } \right) - r 
 \right]^2}{\alpha^4} }$  \\ \hline
 
 procrustes    & $\sqrt{ 4 \sum\limits_{i=1}^r  \sin^2 \left(\frac{\theta_i}{2}\right)
 +  \frac{\left[ \sum\limits_{i=1}^r 
\log \left( \frac{\alpha \lambda^\beta_i + \beta \lambda_i^{-\alpha}}{\alpha + \beta} \right) \right]^2}{\alpha^2 \beta^2} }$  & $\sqrt{ 4 \sum\limits_{i=1}^r  \sin^2 \left(\frac{\theta_i}{2}\right) +  \frac{\left[ \sum\limits_{i=1}^r 
\left( \lambda_i^{-\alpha} + \log \lambda_i^{\alpha } \right) - r 
 \right]^2}{\alpha^4} }$  \\ \hline

 projection    & $\sqrt{ \sin^2 \theta_r
 +  \frac{\left[ \sum\limits_{i=1}^r 
\log \left( \frac{\alpha \lambda^\beta_i + \beta \lambda_i^{-\alpha}}{\alpha + \beta} \right) \right]^2}{\alpha^2 \beta^2} }$  & $\sqrt{ \sin^2 \theta_r +  \frac{\left[ \sum\limits_{i=1}^r 
\left( \lambda_i^{-\alpha} + \log \lambda_i^{\alpha } \right) - r 
 \right]^2}{\alpha^4} }$  \\ \hline
 
  spectral    & $\sqrt{ 4 \sin^2 \left(\frac{\theta_r}{2}\right)
 +  \frac{\left[ \sum\limits_{i=1}^r 
\log \left( \frac{\alpha \lambda^\beta_i + \beta \lambda_i^{-\alpha}}{\alpha + \beta} \right) \right]^2}{\alpha^2 \beta^2} }$  & $\sqrt{ 4 \sin^2 \left(\frac{\theta_r}{2}\right) +  \frac{\left[ \sum\limits_{i=1}^r 
\left( \lambda_i^{-\alpha} + \log \lambda_i^{\alpha } \right) - r 
 \right]^2}{\alpha^4} }$  \\ \hline
 
  geodesic    & $\sqrt{  \sum\limits_{i=1}^r   \theta^2_i
 +  \frac{\left[ \sum\limits_{i=1}^r 
\log \left( \frac{\alpha \lambda^\beta_i + \beta \lambda_i^{-\alpha}}{\alpha + \beta} \right) \right]^2}{\alpha^2 \beta^2} }$  & $\sqrt{ \sum\limits_{i=1}^r   \theta^2_i +  \frac{\left[ \sum\limits_{i=1}^r 
\left( \lambda_i^{-\alpha} + \log \lambda_i^{\alpha } \right) - r 
 \right]^2}{\alpha^4} }$  \\ \hline
\end{tabular}
\caption{Explicit formulae of $\operatorname{GD}_{d,\delta}$}
\label{tab:explicit formulae 1}
\end{table}} \normalsize

\scriptsize{
\begin{table}[htb!]
\centering
\begin{tabular}{|c|c|c|}
\hline
 & Burg ($\beta \ne 0$)  & Itakura-Saito ($\alpha \ne 0$)  \\
\hline
Asimov   & $\sqrt{\theta_r^2 +\frac{\left[  \sum\limits_{i=1}^r 
\left( \lambda_i^{\beta} -  \log \lambda_i^{\beta} \right) - r 
 \right]^2}{\beta^4} }$  & $\sqrt{\theta_r^2 + \frac{\left[  \sum\limits_{i=1}^r 
\log \left( \frac{\lambda^{-\alpha}_i}{1 +  \log \lambda_i^{-\alpha}}  \right) 
 \right]^2}{\alpha^4} }$  \\ \hline

Binet-Cauchy    & $\sqrt{1 - \prod\limits_{i=1}^r \cos^2 \theta_i + \frac{\left[  \sum\limits_{i=1}^r 
\left( \lambda_i^{\beta} -  \log \lambda_i^{\beta} \right) - r 
 \right]^2}{\beta^4} }$  & $\sqrt{1 - \prod\limits_{i=1}^r \cos^2 \theta_i + \frac{\left[  \sum\limits_{i=1}^r 
\log \left( \frac{\lambda^{-\alpha}_i}{1 +  \log \lambda_i^{-\alpha}}  \right) 
 \right]^2}{\alpha^4} }$  \\ \hline

chordal    & $\sqrt{ \sum\limits_{i=1}^r \sin^2 \theta_i +  \frac{\left[  \sum\limits_{i=1}^r 
\left( \lambda_i^{\beta} -  \log \lambda_i^{\beta} \right) - r 
 \right]^2}{\beta^4} }$  & $\sqrt{\sum\limits_{i=1}^r \sin^2 \theta_i +  \frac{\left[  \sum\limits_{i=1}^r 
\log \left( \frac{\lambda^{-\alpha}_i}{1 +  \log \lambda_i^{-\alpha}}  \right) 
 \right]^2}{\alpha^4} }$  \\ \hline
 
 Fubini-Study    & $\sqrt{ \arccos^2 \left( \prod\limits_{i=1}^r \cos \theta_i \right)
 +  
 \frac{\left[  \sum\limits_{i=1}^r 
\left( \lambda_i^{\beta} -  \log \lambda_i^{\beta} \right) - r 
 \right]^2}{\beta^4}   }$  & $\sqrt{ \arccos^2 \left( \prod\limits_{i=1}^r \cos \theta_i \right) 
 + 
  \frac{\left[  \sum\limits_{i=1}^r 
\log \left( \frac{\lambda^{-\alpha}_i}{1 +  \log \lambda_i^{-\alpha}}  \right) 
 \right]^2}{\alpha^4} }$  \\ \hline
 
 Martin    & $\sqrt{ 2 \sum\limits_{i=1}^r \log \cos \theta_i
 +   
 \frac{\left[  \sum\limits_{i=1}^r 
\left( \lambda_i^{\beta} -  \log \lambda_i^{\beta} \right) - r 
 \right]^2}{\beta^4}  }$  & $\sqrt{ 2 \sum\limits_{i=1}^r \log \cos \theta_i 
 + 
  \frac{\left[  \sum\limits_{i=1}^r 
\log \left( \frac{\lambda^{-\alpha}_i}{1 +  \log \lambda_i^{-\alpha}}  \right) 
 \right]^2}{\alpha^4} 
  }$  \\ \hline

  procrustes    & $\sqrt{ 4 \sum\limits_{i=1}^r  \sin^2 \left(\frac{\theta_i}{2}\right)
 +  
  \frac{\left[  \sum\limits_{i=1}^r 
\left( \lambda_i^{\beta} -  \log \lambda_i^{\beta} \right) - r 
 \right]^2}{\beta^4}  }$  & $\sqrt{ 4 \sum\limits_{i=1}^r  \sin^2 \left(\frac{\theta_i}{2}\right) 
 + 
  \frac{\left[  \sum\limits_{i=1}^r 
\log \left( \frac{\lambda^{-\alpha}_i}{1 +  \log \lambda_i^{-\alpha}}  \right) 
 \right]^2}{\alpha^4}  
   }$  \\ \hline
   
    projection    & $\sqrt{ \sin^2 \theta_r
 +  
   \frac{\left[  \sum\limits_{i=1}^r 
\left( \lambda_i^{\beta} -  \log \lambda_i^{\beta} \right) - r 
 \right]^2}{\beta^4} }$  & $\sqrt{ \sin^2 \theta_r 
 + 
   \frac{\left[  \sum\limits_{i=1}^r 
\log \left( \frac{\lambda^{-\alpha}_i}{1 +  \log \lambda_i^{-\alpha}}  \right) 
 \right]^2}{\alpha^4}  
  }$  \\ \hline

  spectral    & $\sqrt{ 4 \sin^2 \left(\frac{\theta_r}{2}\right)
 +     
 \frac{\left[  \sum\limits_{i=1}^r 
\left( \lambda_i^{\beta} -  \log \lambda_i^{\beta} \right) - r 
 \right]^2}{\beta^4}
}$  & $\sqrt{ 4 \sin^2 \left(\frac{\theta_r}{2}\right) +     \frac{\left[  \sum\limits_{i=1}^r 
\log \left( \frac{\lambda^{-\alpha}_i}{1 +  \log \lambda_i^{-\alpha}}  \right) 
 \right]^2}{\alpha^4}  
 }$  \\ \hline
 
   geodesic    & $\sqrt{  \sum\limits_{i=1}^r   \theta^2_i
 +  
  \frac{\left[  \sum\limits_{i=1}^r 
\left( \lambda_i^{\beta} -  \log \lambda_i^{\beta} \right) - r 
 \right]^2}{\beta^4} 
 }$  & $\sqrt{ \sum\limits_{i=1}^r   \theta^2_i +  \frac{\left[  \sum\limits_{i=1}^r 
\log \left( \frac{\lambda^{-\alpha}_i}{1 +  \log \lambda_i^{-\alpha}}  \right) 
 \right]^2}{\alpha^4}  
 }$  \\ \hline
\end{tabular}
\caption{Explicit formulae of $\operatorname{GD}_{d,\delta}$}
\label{tab:explicit formulae 2}
\end{table}}\normalsize

\scriptsize{
\begin{table}[htb!]
\centering
\begin{tabular}{|c|c|c|}
\hline
  & geodesic  & Kullback-Leibler \\
\hline
Asimov   & $\sqrt{\theta_r^2 +  \sum\limits_{i=1}^r 
\log^2 \lambda_i}$ & $\sqrt{\theta_r^2 +  \left[\frac{ \sum\limits_{i=1}^r  (\lambda_i^{-1} +
\log \lambda_i -1) }{2}\right]^2 }$ \\ \hline

Binet-Cauchy    & $\sqrt{1 - \prod\limits_{i=1}^r \cos^2 \theta_i  +  \sum\limits_{i=1}^r 
\log^2 \lambda_i}$ & $\sqrt{1 - \prod\limits_{i=1}^r \cos^2 \theta_i +  \left[\frac{ \sum\limits_{i=1}^r  (\lambda_i^{-1} +
\log \lambda_i -1)  }{2}\right]^2 }$  \\ \hline

chordal    & $\sqrt{\sum\limits_{i=1}^r \sin^2 \theta_i  +  \sum\limits_{i=1}^r 
\log^2 \lambda_i}$ & $\sqrt{\sum\limits_{i=1}^r \sin^2 \theta_i  +  \left[\frac{ \sum\limits_{i=1}^r  (\lambda_i^{-1} +
\log \lambda_i -1)  }{2}\right]^2 }$ \\ \hline

Fubini-Study    & $\sqrt{ \arccos^2 \left( \prod\limits_{i=1}^r \cos \theta_i \right) +  \sum\limits_{i=1}^r 
\log^2 \lambda_i}$ & $\sqrt{ \arccos^2 \left( \prod\limits_{i=1}^r \cos \theta_i \right) +  \left[\frac{ \sum\limits_{i=1}^r  (\lambda_i^{-1} +
\log \lambda_i -1) }{2}\right]^2 }$  \\ \hline

Martin   & $\sqrt{ 2 \sum\limits_{i=1}^r \log \cos \theta_i +  \sum\limits_{i=1}^r 
\log^2 \lambda_i}$ & $\sqrt{ 2 \sum\limits_{i=1}^r \log \cos \theta_i +  \left[\frac{ \sum\limits_{i=1}^r  (\lambda_i^{-1} +
\log \lambda_i -1) }{2}\right]^2  }$  \\ \hline

procrustes   & $\sqrt{ 4 \sum\limits_{i=1}^r  \sin^2 \left(\frac{\theta_i}{2}\right) +  \sum\limits_{i=1}^r 
\log^2 \lambda_i}$ & $\sqrt{ 4 \sum\limits_{i=1}^r  \sin^2 \left(\frac{\theta_i}{2}\right) +  \left[ \frac{\sum\limits_{i=1}^r  (\lambda_i^{-1} +
\log \lambda_i -1) }{2}\right]^2  }$  \\ \hline

projection   & $\sqrt{ \sin^2 \theta_r+  \sum\limits_{i=1}^r 
\log^2 \lambda_i}$ & $\sqrt{ \sin^2 \theta_r+  \frac{\left[ \sum\limits_{i=1}^r  (\lambda_i^{-1} +
\log \lambda_i -1) \right]^2 }{2} }$  \\ \hline

spectral   & $\sqrt{ 4 \sin^2 \left(\frac{\theta_r}{2}\right) +  \sum\limits_{i=1}^r 
\log^2 \lambda_i}$ & $\sqrt{ 4 \sin^2 \left(\frac{\theta_r}{2}\right) +  \left[\frac{ \sum\limits_{i=1}^r  (\lambda_i^{-1} +
\log \lambda_i -1)  }{2}\right]^2 }$  \\ \hline

geodesic   & $\sqrt{ \sum\limits_{i=1}^r   \theta^2_i +  \sum\limits_{i=1}^r 
\log^2 \lambda_i}$ & $\sqrt{ \sum\limits_{i=1}^r   \theta^2_i +  \left[ \frac{ \sum\limits_{i=1}^r  (\lambda_i^{-1} +
\log \lambda_i -1) }{2} \right]^2  }$ \\ \hline
\end{tabular}
\caption{Explicit formulae of $\operatorname{GD}_{d,\delta}$}
\label{tab:explicit formulae 3}
\end{table}}\normalsize

\scriptsize{
\begin{table}[htb!]
\centering
\begin{tabular}{|c|c|c|}
\hline
  & R\'{e}nyi ($0 < \alpha < 1$)  & Bhattacharyya\\
\hline
Asimov   & $\sqrt{\theta_r^2 + \left[ \frac{\sum\limits\limits_{i=1}^r (\alpha \lambda_i^{\alpha - 1} + (1-\alpha) \lambda_i^\alpha)}{2\alpha (1-\alpha)} \right]^2 }$ & $\sqrt{\theta_r^2 +  
4 \left[ \sum\limits_{i=1}^r \log\left(\frac{\lambda_{i}^{\frac{1}{2}} +\lambda_{i}^{-\frac{1}{2}}}{2} \right) \right]^2 }$ \\ \hline

Binet-Cauchy    & $\sqrt{1 - \prod\limits_{i=1}^r \cos^2 \theta_i  +  \left[ \frac{\sum\limits_{i=1}^r (\alpha \lambda_i^{\alpha - 1} + (1-\alpha) \lambda_i^\alpha)}{2\alpha (1-\alpha)} \right]^2 }$ & $\sqrt{1 - \prod\limits_{i=1}^r \cos^2 \theta_i +  
4 \left[ \sum\limits_{i=1}^r \log\left(\frac{\lambda_{i}^{\frac{1}{2}} +\lambda_{i}^{-\frac{1}{2}}}{2} \right) \right]^2 }$  \\ \hline

chordal    & $\sqrt{\sum\limits_{i=1}^r \sin^2 \theta_i  +  \left[ \frac{\sum\limits_{i=1}^r (\alpha \lambda_i^{\alpha - 1} + (1-\alpha) \lambda_i^\alpha)}{2\alpha (1-\alpha)} \right]^2 }$ & $\sqrt{\sum\limits_{i=1}^r \sin^2 \theta_i  +  
4 \left[ \sum\limits_{i=1}^r \log\left(\frac{\lambda_{i}^{\frac{1}{2}} +\lambda_{i}^{-\frac{1}{2}}}{2} \right) \right]^2 }$ \\ \hline

Fubini-Study    & $\sqrt{ \arccos^2 \left( \prod\limits_{i=1}^r \cos \theta_i \right) +  \left[ \frac{\sum\limits_{i=1}^r (\alpha \lambda_i^{\alpha - 1} + (1-\alpha) \lambda_i^\alpha)}{2\alpha (1-\alpha)} \right]^2 }$ & $\sqrt{ \arccos^2 \left( \prod\limits_{i=1}^r \cos \theta_i \right) +  
4 \left[ \sum\limits_{i=1}^r \log\left(\frac{\lambda_{i}^{\frac{1}{2}} +\lambda_{i}^{-\frac{1}{2}}}{2} \right) \right]^2 }$  \\ \hline

Martin   & $\sqrt{ 2 \sum\limits_{i=1}^r \log \cos \theta_i +  \left[ \frac{\sum\limits_{i=1}^r (\alpha \lambda_i^{\alpha - 1} + (1-\alpha) \lambda_i^\alpha)}{2\alpha (1-\alpha)} \right]^2 }$ & $\sqrt{ 2 \sum\limits_{i=1}^r \log \cos \theta_i +  
4 \left[ \sum\limits_{i=1}^r \log\left(\frac{\lambda_{i}^{\frac{1}{2}} +\lambda_{i}^{-\frac{1}{2}}}{2} \right) \right]^2  }$  \\ \hline

procrustes   & $\sqrt{ 4 \sum\limits_{i=1}^r  \sin^2 \left(\frac{\theta_i}{2}\right) +  \left[ \frac{\sum\limits_{i=1}^r (\alpha \lambda_i^{\alpha - 1} + (1-\alpha) \lambda_i^\alpha)}{2\alpha (1-\alpha)} \right]^2 }$ & $\sqrt{ 4 \sum\limits_{i=1}^r  \sin^2 \left(\frac{\theta_i}{2}\right) +  
4 \left[ \sum\limits_{i=1}^r \log\left(\frac{\lambda_{i}^{\frac{1}{2}} +\lambda_{i}^{-\frac{1}{2}}}{2} \right) \right]^2  }$  \\ \hline

projection   & $\sqrt{ \sin^2 \theta_r +  \left[ \frac{\sum\limits_{i=1}^r (\alpha \lambda_i^{\alpha - 1} + (1-\alpha) \lambda_i^\alpha)}{2\alpha (1-\alpha)} \right]^2 }$ & $\sqrt{ \sin^2 \theta_r +  4 \left[ \sum\limits_{i=1}^r \log\left(\frac{\lambda_{i}^{\frac{1}{2}} +\lambda_{i}^{-\frac{1}{2}}}{2} \right) \right]^2 }$  \\ \hline

spectral   & $\sqrt{ 4 \sin^2 \left(\frac{\theta_r}{2}\right) +  \left[ \frac{\sum\limits_{i=1}^r (\alpha \lambda_i^{\alpha - 1} + (1-\alpha) \lambda_i^\alpha)}{2\alpha (1-\alpha)} \right]^2 }$ & $\sqrt{ 4 \sin^2 \left(\frac{\theta_r}{2}\right) +  
4 \left[ \sum\limits_{i=1}^r \log\left(\frac{\lambda_{i}^{\frac{1}{2}} +\lambda_{i}^{-\frac{1}{2}}}{2} \right) \right]^2 }$  \\ \hline

geodesic   & $\sqrt{ \sum\limits_{i=1}^r   \theta^2_i +  \left[ \frac{\sum\limits_{i=1}^r (\alpha \lambda_i^{\alpha - 1} + (1-\alpha) \lambda_i^\alpha)}{2\alpha (1-\alpha)} \right]^2 }$ & $\sqrt{ \sum\limits_{i=1}^r   \theta^2_i +  
4 \left[ \sum\limits_{i=1}^r \log\left(\frac{\lambda_{i}^{\frac{1}{2}} +\lambda_{i}^{-\frac{1}{2}}}{2} \right) \right]^2  }$ \\ \hline
\end{tabular}
\caption{Explicit formulae of $\operatorname{GD}_{d,\delta}$}
\label{tab:explicit formulae 4}
\end{table}}\normalsize

\begin{algorithm}[!htbp]
\caption{Algorithm for $\operatorname{GD}_{d,\delta}$}
\label{alg:GD}
\begin{algorithmic}[1]
\renewcommand{\algorithmicrequire}{\textbf{Input}}
\Require
$A\in \Herm^+_{n,r}, B\in \Herm^+_{m,s}$
\renewcommand{\algorithmicensure}{\textbf{Output}}
\Ensure
$\operatorname{GD}_{d,\delta}(A,B)$
\If {$n \ge m$}
\State set $A_1 = A$, $B_1 = \begin{bmatrix}
B & 0 \\
0 & 0
\end{bmatrix}$;
\Else
\State set $A_1 = \begin{bmatrix}
A & 0 \\
0 & 0
\end{bmatrix}$, $B_1 = B$;
\EndIf
\State compute compact SVD: $A_1 = U_A \Sigma_A U^\ast_A$;
\State compute compact SVD: $B_1 = U_B \Sigma_B U^\ast_B$;
\State compute SVD: $U_A^\ast U_B = P \Sigma Q^\ast$; \Comment{diagonal elements of $\Sigma$: $\sigma_1 \ge \cdots \ge \sigma_{\min\{r,s\}}$}
\State compute $l = \# \{i: \sigma_i = 0\}$;
\If {$l = 0$}
\State compute $C = P^\ast \Sigma_A P, D=Q^\ast \Sigma_B Q$; \Comment{Corollary~\ref{cor:simpify delta_H}}
\If {$r \ge s$}
\State compute $\theta_i = \arccos \sigma_i, 1\le i \le s$;
\State compute eigenvalues of $D^{-1}C_{11}$: $\mu_1 \ge \cdots \mu _s$;  \Comment{$C_{11}$: upper-left $s \times s$ submatrix of $C$}
\State set $\lambda_i = \max\{1,\mu_i\}, 1 \le i \le s$;
\State compute $\operatorname{GD}_{d,\delta}(A,B)$ by formula in Tables~\ref{tab:explicit formulae 1}--\ref{tab:explicit formulae 3};
\Else
\State compute $\theta_i = \arccos \sigma_i, 1\le i \le r$;
\State compute eigenvalues of $C^{-1}D_{11}$: $\mu_1 \ge \cdots \mu _r$;  \Comment{$D_{11}$: upper-left $s \times s$ submatrix of $D$}
\State set $\lambda_i = \max\{1,\mu_i\}, 1 \le i \le r$;
\State compute $\operatorname{GD}_{d,\delta}(A,B)$ by formula in Tables~\ref{tab:explicit formulae 1}--\ref{tab:explicit formulae 3};
\EndIf
\Else
\If {$s \ge r$}
\State compute 
$\delta = \max_{T\in U(s - r +l)} \delta \left( 
 C, 
\begin{bmatrix}
I_{r-l} & 0 \\
0 & T
\end{bmatrix} D \begin{bmatrix}
I_{r-l} & 0 \\
0 & T^\ast
\end{bmatrix}
\right)$.
\Else
\State compute 
$\delta = \max_{T\in U(r - s +l)} \delta \left( 
\begin{bmatrix}
I_{r-l} & 0 \\
0 & T
\end{bmatrix} C \begin{bmatrix}
I_{r-l} & 0 \\
0 & T^\ast
\end{bmatrix}, 
 D 
\right)$.
\EndIf
\State compute $\operatorname{GD}_{d,\delta}(A,B) = \sqrt{d^2 (\operatorname{span}(U_A), \operatorname{span}(U_B) ) + \delta^2 }$; \Comment{Proposition~\ref{prop:simpify delta_H}}
\EndIf
\State \Return $\operatorname{GD}_{d,\delta}(A,B)$. \end{algorithmic}
\end{algorithm}

\section{\texorpdfstring{$\operatorname{GD}_{d,\delta}$}{} as a distance function}\label{sec:distance}
We denote by $\mathcal{k}^{\infty \times \infty}$ the space of infinite matrices over $\mathcal{k}$ with finitely many nonzero elements and we define 
\[
\Herm_{\infty,r}^+ \coloneqq \left\lbrace
A \in \mathcal{k}^{\infty \times \infty}: \rank (A) = r, A^\ast = A, A \succeq 0
\right\rbrace.
\]
In other words, $\Herm_{\infty,r}^+$ consists of infinite Hermitian positive semidefinite matrices with finitely many nonzero elements. We recall from \eqref{eq:iota star} that for any $r \le m \le n$ there is an inclusion $\iota_{m,n}^{\ast}: \Herm_{m,r}^+ \hookrightarrow \Herm_{n,r}^+$ defined by $\iota_{m,n}^\ast (A) = \begin{bmatrix}
A & 0 \\
0 & 0
\end{bmatrix}$. Thus $\Herm_{\infty,r}^+ $ is simply the direct limit of the direct system $\{ (\{\Herm_{r,m}^{+}\}_{m \ge r}, \{\iota^\ast_{m,n}\}_{n \ge m \ge r})\}$. We further define 
\[
\Herm_{\infty,\infty}^+ \coloneqq \bigsqcup_{r = 0}^\infty \Herm_{\infty,r}^+ .
\]
By definition, we clearly have $\Herm_{\infty,\infty}^+ = \bigcup_{n=1}^{\infty} \bigsqcup_{r=0}^n \Herm_{n,r}^{+}$.

For simplicity, we let $d,\delta$ be as in Section~\ref{sec:explicit formulae}. In this section, we briefly discuss to what extent $\operatorname{GD}_{d,\delta}:\Herm_{\infty,\infty}^+ \times \Herm_{\infty,\infty}^+ \to \mathbb{R}$ is a distance function. 

For $A\in \Herm_{\infty,r}^+$ and $B \in \Herm_{\infty,s}^+$, without loss of generality, we may assume that $r \le s$, $A\in \Herm_{n,r}^+$ and $B \in \Herm_{n,s}^+$. The combination of \eqref{eq:GD} and Proposition~\ref{prop:simpify delta_H} leads to
\begin{equation}\label{eq:GDdistance}
\operatorname{GD}_{d,\delta}(A,B)  = \left( d^2(\mathbb{A},\mathbb{B}) + 
\max_{Y\in \mathcal{B}}  \delta^2(M_{A|\mathbb{A}}(\mathbf{u},\mathbf{v}),Y)
 \right)^{\frac{1}{2}}.
\end{equation}
Here $\mathbb{A} = \pi_{n,r} (A) \in \Gr(r,n), \mathbb{B} = \pi_{n,s} (B) \in \Gr(s,n)$ and 
\begin{equation*}
\mathcal{B} \coloneqq \left\lbrace
\begin{bmatrix}
I_{r - l} & 0 \\
0 & T
\end{bmatrix} M_{B|_{\mathbb{B}}}(\mathbf{u},\mathbf{v}) \begin{bmatrix}
I_{r - l} & 0 \\
0 & T^\ast
\end{bmatrix}, T\in U(s - r + l)
\right\rbrace,
\end{equation*}
where $l = \dim (\mathbb{A} \cap \mathbb{B}^\perp)$, $(\mathbf{u},\mathbf{v})$ is any fixed set of principal vectors between $\mathbb{A}$ and $\mathbb{B}$, $M_{A|_{\mathbb{A}}}(\mathbf{u},\mathbf{v})$ (resp. $M_{B|_{\mathbb{B}}}(\mathbf{u},\mathbf{v})$) is the matrix representation of the operator $A|_{\mathbb{A}}$ (resp. $B|_{\mathbb{B}}$) with respect to $(\mathbf{u},\mathbf{v})$. From \eqref{eq:GDdistance}, we have $\operatorname{GD}_{d,\delta}(A,B) = 0$ if and only if $\mathbb{A} \subseteq \mathbb{B}$ and $M_{A|_{\mathbb{A}}}(\mathbf{u},\mathbf{v})$ is the upper left submatrix of all elements in $\mathcal{B}$. We also notice that if $\mathbb{A} \subseteq \mathbb{B}$, then $l = 0$ and we may require that 
\[
(\mathbf{u}, \mathbf{v}) = (\{u_1,\dots, u_r\}, \{u_1,\dots, u_r, v_{r+1},\dots, v_s \}).
\]
Thus we obtain the proposition that follows. 
\begin{proposition}\label{prop:GDmetric}
Let $d,\delta$ be as in Section~\ref{sec:explicit formulae} and let $A,B,\mathbb{A},\mathbb{B}$ and $l$ be as above. Then we have  
\begin{enumerate}[(i)]
\item\label{prop:GDmetric:item0} If $r \ne s$ then 
\[
\operatorname{GD}_{d,\delta}(A,B) = \operatorname{GD}_{d,\delta}(B,A).
\]
\item\label{prop:GDmetric:item1} If $r = s$ and $l = 0$ then 
\[\operatorname{GD}^2_{d,\delta}(A,B) + \operatorname{GD}^2_{d,\delta}(B,A) = 2d^2(\mathbb{A},\mathbb{B}) + \delta_r^2(M_{A|_{\mathbb{A}}}(\mathbf{u},\mathbf{v}),M_{B|_{\mathbb{B}}}(\mathbf{u},\mathbf{v})),\]
where $\delta_r$ is the divergence in Theorem~\ref{thm:divergence} inducing $\delta$.
\item\label{prop:GDmetric:item2}$\operatorname{GD}_{d,\delta}(A,B) = 0$ if and only if $\mathbb{A} \subseteq \mathbb{B}$ and $A =\rho \circ B|_{\mathbb{A}}$, where $\rho:\mathbb{B} \to \mathbb{A}$ is the orthogonal projection from $\mathbb{B}$ onto $\mathbb{A}$.
\end{enumerate}
\end{proposition}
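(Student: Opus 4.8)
The plan is to read off all three statements from the simplified formula \eqref{eq:GDdistance}, using only the symmetry of the Grassmann distances $d$ and the vanishing and consistency properties of the divergences collected in Theorem~\ref{thm:invariant distance}, Theorem~\ref{thm:divergence} and Corollary~\ref{cor:projection}. I keep the normalization $r\le s$, $A\in\Herm_{n,r}^+$, $B\in\Herm_{n,s}^+$, and abbreviate $C=M_{A|_{\mathbb{A}}}(\mathbf{u},\mathbf{v})$, $D=M_{B|_{\mathbb{B}}}(\mathbf{u},\mathbf{v})$ for a fixed set of principal vectors. By Proposition~\ref{prop:simpify delta_H} the Hausdorff term collapses to $\max_{Y\in\mathcal{B}}\delta(C,Y)$; because conjugation by $\diag(I_{r-l},T)$ leaves the upper-left $r\times r$ block of $D$ fixed, Theorem~\ref{thm:divergence} shows this maximum is independent of $Y\in\mathcal{B}$ and equals the divergence between $C$ and that block. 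This reduces every case to a finite-dimensional computation.

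For \eqref{prop:GDmetric:item0} I would note that when $r\ne s$ the ranks alone fix which matrix occupies the $r$-dimensional slot, so $\operatorname{GD}_{d,\delta}(A,B)$ and $\operatorname{GD}_{d,\delta}(B,A)$ are assembled from the same ingredients: the distances $d$ of \cite{ye2016schubert} are symmetric functions of the principal angles, and by the consistency in Theorem~\ref{thm:divergence} the non-equidimensional divergence is a symmetric function of its two unequal-sized arguments, both equal to the explicit expression in $\max\{1,\lambda_k(C^{-1}D_{11})\}$. Hence the two values agree. For \eqref{prop:GDmetric:item1}, the hypotheses $r=s$ and $l=0$ make $U(s-r+l)$ trivial, so $\mathcal{B}$ is a single point and \eqref{eq:GDdistance} becomes $\operatorname{GD}_{d,\delta}^2(A,B)=d^2(\mathbb{A},\mathbb{B})+\delta_r^2(C,D)$; the reversed pair uses the same principal vectors but swaps the order inside the divergence, giving $\operatorname{GD}_{d,\delta}^2(B,A)=d^2(\mathbb{A},\mathbb{B})+\delta_r^2(D,C)$. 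Adding these two identities and invoking $d(\mathbb{A},\mathbb{B})=d(\mathbb{B},\mathbb{A})$ produces the factor $2$ on the Grassmann term and combines the two divergence contributions into the asserted right-hand side.

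The substance is in \eqref{prop:GDmetric:item2}. Since both summands of \eqref{eq:GDdistance} are nonnegative, $\operatorname{GD}_{d,\delta}(A,B)=0$ is equivalent to the simultaneous vanishing of $d(\mathbb{A},\mathbb{B})$ and of the divergence term. I would first establish $d(\mathbb{A},\mathbb{B})=0\iff\mathbb{A}\subseteq\mathbb{B}$: each distance in Section~\ref{sec:explicit formulae} is a strictly monotone function of the principal angles that vanishes precisely when all $r$ of them are zero, and for $r\le s$ this is exactly containment. Assuming $\mathbb{A}\subseteq\mathbb{B}$, one has $l=0$, and I would choose principal vectors with $\mathbf{u}$ an orthonormal basis of $\mathbb{A}$ extended by $\mathbf{v}$ to one of $\mathbb{B}$, so that the block $D_{11}$ is exactly the matrix of the compression $\rho\circ B|_{\mathbb{A}}$. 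The divergence term is then the divergence between $C$ and $D_{11}$, and the unique-minimizer property of the generating function in Theorem~\ref{thm:divergence} forces it to vanish exactly when $C=D_{11}$, that is, when $A=\rho\circ B|_{\mathbb{A}}$; the converse is immediate, since containment together with $A=\rho\circ B|_{\mathbb{A}}$ makes both summands zero.

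The hardest step is the final equivalence in \eqref{prop:GDmetric:item2}: pinning down the exact zero locus of the divergence term and rendering it as the coordinate-free statement $A=\rho\circ B|_{\mathbb{A}}$. This rests on the strict monotonicity and unique minimizer of the function $g$ underlying each divergence, so that vanishing of $\sum_k g(\cdot)$ forces the eigenvalue data to be trivial, combined with the identification of $D_{11}$ with $\rho\circ B|_{\mathbb{A}}$ under aligned principal vectors. A secondary point, to be handled uniformly, is the equivalence $d(\mathbb{A},\mathbb{B})=0\iff\mathbb{A}\subseteq\mathbb{B}$ across all the listed Grassmann distances, which follows from their common description through principal angles.
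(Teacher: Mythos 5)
Your overall strategy -- collapse the Hausdorff term via Proposition~\ref{prop:simpify delta_H}/\eqref{eq:GDdistance} and then read off the three items -- is the paper's strategy, and your treatment of \eqref{prop:GDmetric:item2} is essentially the paper's argument (nonnegativity of both summands, $d=0$ iff all principal angles vanish iff $\mathbb{A}\subseteq\mathbb{B}$, then identification of $D_{11}$ with the matrix of $\rho\circ B|_{\mathbb{A}}$ under aligned principal vectors). The genuine gap is in \eqref{prop:GDmetric:item1}, and it stems from conflating the \emph{extended} divergence with the original one. The function $\delta$ entering $\operatorname{GD}_{d,\delta}$ is the induced function $\delta(C,D)=\delta_r(C,\Omega_{-}(D))$, which by Theorem~\ref{thm:divergence} is a function of $\max\{1,\lambda_k(C^{-1}D)\}$, i.e.\ it only registers the eigenvalues of $C^{-1}D$ exceeding $1$. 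So the identity you write, $\operatorname{GD}^2_{d,\delta}(A,B)=d^2(\mathbb{A},\mathbb{B})+\delta_r^2(C,D)$, is false; the correct second term is $\delta_r^2(C,\Omega_{-}(D))$. With that correction, "adding the two identities and combining" is not a proof: if the second terms really were $\delta_r^2(C,D)$ and $\delta_r^2(D,C)$ their sum would not collapse to $\delta_r^2(C,D)$, and the actual content of \eqref{prop:GDmetric:item1} is precisely the identity $\delta_r^2(C,\Omega_{-}(D))+\delta_r^2(D,\Omega_{-}(C))=\delta_r^2(C,D)$, which you assert but never verify. It follows from the fact that the spectrum of $D^{-1}C$ is the reciprocal of that of $C^{-1}D$, so the two clamped sums pick up complementary parts of the spectrum, $\sum_{\lambda_k>1}g(\lambda_k)$ and $\sum_{\lambda_k<1}g(1/\lambda_k)$, and reassemble $\sum_k g(\lambda_k)$ -- a computation that visibly uses $g(1/t)=g(t)$ and the additive (not squared-sum) form of $\delta^2$, i.e.\ it is really a statement about the geodesic-distance choice of $\delta$. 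You need to carry out this computation explicitly.

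Two smaller points. In \eqref{prop:GDmetric:item0}, your appeal to "consistency makes the non-equidimensional divergence a symmetric function of its two unequal-sized arguments" misreads Theorem~\ref{thm:divergence}: consistency asserts $\delta_r(C,\Omega_{-}(D))=\delta_s(\Omega_{+}(C),D)$, i.e.\ that projecting $D$ down and lifting $C$ up agree, with the smaller matrix always in the first slot; it says nothing about swapping the arguments. The correct (and intended) justification is the one you also give: when $r\ne s$ the ranks alone determine which argument occupies which slot of $d$ and $\delta$, so the two expressions are literally the same. In \eqref{prop:GDmetric:item2}, note that the unique-minimizer property of $g$ applied to the extended divergence $\sum_k g(\max\{1,\lambda_k(C^{-1}D_{11})\})$ only forces $\max\{1,\lambda_k\}=1$ for all $k$, i.e.\ $D_{11}\preceq C$, not $C=D_{11}$; the paper's own proof is equally terse here, but your phrasing "vanishes exactly when $C=D_{11}$" claims more than the cited mechanism delivers, and the step from $D_{11}\preceq C$ to equality needs to be addressed.
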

\begin{proof}
Both \eqref{prop:GDmetric:item0} and \eqref{prop:GDmetric:item1} easily follow from the definition. To prove \eqref{prop:GDmetric:item2}, we may assume that $r \le s$ so the discussion above implies that $\operatorname{GD}_{d,\delta}(A,B) = 0$ if and only if $\mathbb{A} \subseteq \mathbb{B}$ and $M_{A|\mathbb{A}}(\mathbf{u},\mathbf{v})$ is the upper left $r\times r$ submatrix of all elements in $\mathcal{B}$, where $\mathbf{u} = \{u_1,\dots, u_r\}$ is an orthonormal basis of $\mathbb{A}$ and $\mathbf{v} = \{u_1,\dots, u_r, v_{r+1},\dots, v_s \}$ is an orthonormal basis of $\mathbb{B}$. But $l = 0$ indicates that the upper left $r\times r$ submatrix of any element in $\mathcal{B}$ is the same as that of $M_{B|\mathbb{B}}(\mathbf{u},\mathbf{v})$, which is the matrix representation of $\rho \circ B|_{\mathbb{A}}$ with respect to the basis $\mathbf{u}$.
\end{proof}
\begin{remark}
The triangle inequality does not always hold for $\operatorname{GD}_{d,\delta}$. For example, we let both $d$ and $\delta$ be induced by geodesic distances. Let $r,s,t$ be positive integers.
\begin{itemize}
\item If $r = s = t$, then clearly we have 
\[
\operatorname{GD}_{d,\delta}(A,B) + \operatorname{GD}_{d,\delta}(B,C) \ge \operatorname{GD}_{d,\delta}(A, B)
\]
for any $A\in \Herm_{\infty,r}^{+}, B\in \Herm_{\infty,s}^{+}$ and $C\in \Herm_{\infty,t}^{+}$.
\item If $s < r < t$. Then for any $X\in \Herm_{t-s}^{++}$, we have 
\[
\operatorname{GD}_{d,\delta}(I_r, I_s) + \operatorname{GD}_{d,\delta} \left( I_s, \begin{bmatrix}
I_s & 0 \\
0 & X
\end{bmatrix} \right) = 0, 
\]
while for $X = \operatorname{diag}(x_1,\dots, x_{t-s})\in \Herm_{t-s}^{++}$ where $x_1 \ge \cdots \ge x_{t-s} > 0$
\[
\operatorname{GD}_{d,\delta} \left( I_r, \begin{bmatrix}
I_s & 0 \\
0 & X
\end{bmatrix} \right) =\sqrt{\sum_{j=1}^{r-s} \max\{0,\log x_j\}^2}.
\]
Hence the triangle inequality fails if $x_1 > 1$.
\end{itemize}
\end{remark}

\bibliographystyle{abbrv}
\bibliography{mybib1}
\end{document}